\newtheorem{theorem}{Theorem}[section]
\newtheorem{corollary}{Corollary}
\newtheorem*{main}{Main Theorem}
\newtheorem{lemma}[theorem]{Lemma}
\newtheorem{proposition}{Proposition}
\theoremstyle{definition}
\newtheorem{definition}[theorem]{Definition}
\newtheorem{remark}{Remark}
\newcommand{\R}{\mathbb{R}}
\newcommand{\bu}{\mathbf{u}}
\newcommand{\by}{\mathbf{y}}
\newcommand{\PHI}{\mathbf{\Phi}}
\newcommand{\bz}{\mathbf{z}}
\newcommand{\bg}{\mathbf{g}}
\newcommand{\bw}{\mathbf{w}}
\newcommand{\V}{\mathbf{V}}
\newcommand{\wt}{\widetilde}
\newcommand{\mc}{\mathcal}
\title[] %Use the shortened version of the full title
      {Wellposedness of Boussinesq system}
\author[ Arnab Roy]{}
\subjclass{Primary: 35Q30, 76B03, 76D03, 76D05; Secondary: 35K51.}
 \keywords{. Boussinesq system, Navier-Stokes equation, Nonhomogeneous boundary conditions.}
 \email{royarnab244@gmail.com}
\begin{document}
\maketitle

% Enter the first author's name and address:
\centerline{\scshape Arnab Roy}
\medskip
{\footnotesize
% please put the address of the first author
 \centerline{Technische Universit\"{a}t Darmstadt, }
   %\centerline{Other lines}
   \centerline{ Schlo\ss{}gartenstra{\ss}e 7, 64289 Darmstadt, Germany.}
} % Do not forget to end the {\footnotesize by the sign }

\medskip

%\centerline{\scshape First-name2 last-name2 and First-name3
%last-name3}
%\medskip
%{\footnotesize
% % please put the address of the second  and third author
% \centerline{ First line of the address of the second author}
%   \centerline{Other lines}
%   \centerline{Springfield, MO 65810, USA}
%}
%
%\bigskip

% The name of the associate editor will be entered by an editorial staff
% "Communicated by the associate editor name" is not needed for special issue.
% \centerline{(Communicated by the associate editor name)}

%The abstract of your paper
\begin{abstract}
In this paper, we consider the viscous, incompressible, nonlinear Boussinesq system in two and three spatial dimension. We study the existence and regularity of solutions to the Boussinesq system with nonhomogeneous  boundary conditions for which the normal component for the velocity is not necessary equal to zero. We establish the existence of global weak solutions to the three-dimensional instationary nonlinear Boussinesq system but without any smallness assumption on the initial and boundary conditions. 
\end{abstract}

%The title of your section 1
\section{Introduction}
 In this paper we are mainly interested in the following initial boundary value problem for non-stationary nonlinear Boussinesq system that describes the flow of a viscous incompressible fluid in $\Omega \subset \R^{N},\, N=2,3$ with smooth boundary $\Gamma$, subject to convective heat transfer : 
 \begin{align}\label{prothomeq}
\left\{ \begin{aligned}  
 & \frac{\partial \mathbf{z}}{\partial t} - \nu\Delta \mathbf{z} + (\mathbf{z}.\nabla)\mathbf{z} + \nabla p= \bm {\beta}\theta + \mathbf{f}_{1} \mbox{ in } Q=\Omega \times (0,T), \\ & \mbox{div }\mathbf{z}=0 \mbox{ in } Q=\Omega \times (0,T);\\ &\mathbf{z}=\mathbf{g} \mbox{ on } \Sigma=\Gamma \times (0,T),\quad \mathbf{z}(0)=\mathbf{z}_{0}(x) \mbox{ in } \Omega, \\& \frac{\partial \theta}{\partial t} - \mu\Delta\theta + (\mathbf{z}.\nabla)\theta  = f_{2} \mbox{ in }\mbox { Q },  \\& \frac{\partial\theta}{\partial n} = h \mbox{ on } \Sigma ,\quad \theta(0)=\theta_{0} \mbox{ in } \Omega.
\end{aligned}
\right.
\end{align}
Here $\bz=(z_{1},z_{2},....,z_{n})$ is the fluid velocity, $p$ denotes the pressure and $\theta$ is the temperature. Here $i$-th component of $(\bz.\nabla)\bz$ is $\sum_{j=1}^{n}z_{j}\frac{\partial z_{i}}{\partial x_{j}}$ and $\frac{\partial \theta}{\partial n}$ denotes the outer normal derivative of $\theta$ at $x$ on $\Gamma$. In equation \eqref{prothomeq}, $\bm{\beta}$ is gravitational vector function, $\nu>0$ is kinematic viscosity and $\mu>0$ is the thermal diffusivity, $\mathbf{f}_{1}$ is an external force and $f_{2}$ is a heat source applied to the fluid. Also, the boundary conditions $\bg(x,t)$ and $h(x,t)$ are functions defined on $\Gamma \times (0,T)$. In this Boussinesq approximation, the fluid is treated as incompressible when formulating the Navier-Stokes mass and momentum conservation equations and here the effect of temperature change is taken into account in the buoyancy term $\bm{\beta}\theta$ which drives convection.\\

 In this paper, our aim is to prove the existence of  the weak solution for system \eqref{prothomeq} and analyse its regularity. In \cite{Morimoto}, the author also considers the same non-homogeneous problem \eqref{prothomeq} in two and three dimension but with very smooth boundary data. Motivated by boundary control problems, here our main interest is in investigating the case when boundary data are not so regular. In that case, we cannot apply the extension method as in \cite{Morimoto}. We are also interested in finding a sufficient condition on $(\bg,h)$ so that a weak solution to equation \eqref{prothomeq} exists.\\

We denote the outward unit normal to the boundary $\Gamma$ by $\mathbf{n}$. When the normal component of $\mathbf{g}$ is zero, i.e, in the case of $\bg.\mathbf{n}=0$, the authors in \cite{GRUBB} and \cite{GRUBBS} studied existence and regularity results of Navier-Stokes equation by pseudo-differential techniques. We can adopt similar analysis in the case of equation \eqref{prothomeq} if $\bg.\mathbf{n}=0$. But for engineering and other practical applications (e.g \cite{ENG}), the interesting case to study is when $\bg.\mathbf{n}\neq 0$. To overcome this difficulty we will follow a similar approach as described in \cite{RAMO}. In \cite{RAMO} the author writes Stokes and Oseen equations in the form of a system of two operator equations and obtains the optimal regularity results. The first one is an evolution equation satisfied by the projection of the solution on the Stokes space - the space of divergence free vector fields with the normal trace equal to zero. The second one is a quasi-stationary equation satisfied by the projection of the solution on the orthogonal complement of the Stokes space. In our case, we have to deal with coupled system \eqref{prothomeq}. \\

We explore here the existence and regularity of solutions to linearized and nonlinear Boussinesq system with nonhomogeneous boundary conditions. The novelty of our work is that we consider boundary conditions with low regularity and the Dirichlet boundary condition for velocity for which the normal component is not equal to zero and Neumann boundary condition for temperature, that makes the problem more interesting and challenging. We also establish the existence of weak solution to the three dimensional nonlinear Boussinesq system \eqref{prothomeq} without any smallness assumption to initial condition. \\

Our main result concerning the existence and regularity of solution to system \eqref{prothomeq} is the following:
\begin{main}\label{mainthm}
Let $(P\bz(0),\theta(0))\in \V_{n}^{0}(\Omega)\times L^{2}(\Omega)$ and  $\bg\in \V^{s,s}(\Sigma)$, \\$h \in L^{2}(0,T;(H^{1-s}(\Gamma))') \cap H^{s}(0,T;(H^{1}(\Gamma))')$ with $\frac{1}{2}< s < 1$, $\mathbf{f}_{1} \in L^{2}(0,T; \V^{-1}(\Omega))$,\\ $f_{2} \in L^{2}(0,T; (H^{1}(\Omega))')$.Then equation \eqref{prothomeq} admits at least one weak solution $(\bz, \theta)$ in $C_{w}([0,T];\V^{0}(\Omega)\times L^{2}(\Omega)) \cap L^{2}(0,T;\V^{1}(\Omega))\times L^{2}(0,T;H^{1}(\Omega))$ and $p \in \mathcal{D}'(0,T; L^{2}(\Omega))$,  where $C_{w}([0,T];\V^{0}(\Omega)\times L^{2}(\Omega))$ is the subspace of $L^{\infty}(0,T;\V^{0}(\Omega)\times L^{2}(\Omega))$ which are continuous from $[0,T]$ into $\V^{0}(\Omega)\times L^{2}(\Omega)$ equipped with its weak topology. 
\end{main}
Here $P$ is the usual L\'{e}ray projector and the notations $\V^{0}(\Omega),\,\V^{1}(\Omega),\,\V^{s,s}(\Sigma)$ are as in section \ref{marker}. The proof of this existence and regularity result rely on combination of techniques described in \cite{Morimoto} and \cite{RAMO}. Actually, we split the system into two parts-homogeneous and non-homogeneous boundary data. We split the system such that the part with homogeneous boundary data has nonlinear terms and the part with non homogeneous boundary condition has only linear terms. Then we study the part with homogeneous boundary data as described in \cite{Morimoto}. To analyse the non homogeneous part, we will follow a similar technique as in \cite{RAMO}, by applying projection to the first equation and then writing the coupled system in the form of an evolution equation and a quasi-stationary equation.\\
  
  The works in the literature which are most relevant to our present paper are \cite{RAMO},\cite{Cannon},\cite{Oeda},\cite{Morimoto}. In \cite{Cannon}, authors showed the existence of a unique local in time weak solution in $\mathbb{R}^{2} \times (0,T)$. They also prove a global existence theorem of weak solution for small initial data, if the exterior force field depends on the temperature (in suitable spaces). Recently, in the case of $\Omega=\R^{2}$, the result of global existence of smooth solutions to nonlinear Boussinesq system is generalized to the cases of 'partial viscosity' (i.e., either $\nu>$ 0 and $\mu=$ 0, or $\nu=$ 0 and $\mu>$ 0) by Hou-Li \cite{HOU} and Chae \cite{CHAE} independently for the case of smooth initial data but without smallness assumption.\\

\^{O}eda studied the moving boundary case with Dirichlet boundary condition in \cite{Oeda}. The author considered the time dependent domain $\hat{\Omega}=\cup_{0\leq t \leq T}\Omega(t)\times \{t\}$, where $\Omega(t)$ is a bounded set in $\mathbb{R}^{N}(N=2 \mbox{ or } 3)$. Under certain assumptions the existence of a weak solution on any interval $[0,T]$ and of a unique strong solution on a small time interval $[0,\tau_{0}]$ is proved.\\

 In \cite{Morimoto}, the author treats the initial value problem \eqref{prothomeq} on a bounded domain in $\mathbb{R}^{N}\,(N=2 \mbox{ or }3)$, with no-slip boundary conditions for the velocity and allows the temperature to be prescribed (as a function of space and time) on one part of the boundary, while the temperature flux is prescribed on the rest of the boundary. He defines an appropriate notion of weak solution, and proves existence and uniqueness (for $N=2$) results for such solutions. But for the definition of weak solution, the author uses an extension of boundary value $\bg$ to $\overline{\Omega}\times (0,T)$ and solve the system of equations corresponding to it. Assuming $\bg\in C^{1}(\overline{\Gamma}\times (0,T))$, i.e, sufficiently regular boundary data, the proof of existence is based on the construction of approximate solution by Galerkin method and passage to the limit using apriori estimates.\\

 The rest of the paper is organized as follows. Section 2 is dedicated to notations and general functional framework. We study the steady linearized Boussinesq system in Section 3. We define a weak solution to this system and establish existence and regularity results. We also introduce transposition solution in the case of not so regular boundary data. Section 4 is devoted to unsteady linearized Boussinesq system. At first we consider linearization around stationary state and then we study a linearization around an instationary state which is needed in section 5 to analyze the nonlinear Boussinesq system with nonhomogeneous boundary condition. In section 5, we prove the existence of weak solution to nonlinear Boussinesq system in 3D case. To the best of our knowledge, this existence and regularity results for three dimensional nonlinear Boussinesq system seems to be new. 

%The title of your section 2
\section{Notation and general functional framework}\label{marker}

%The title of your first subsection in section 2
We will use boldface letters to denote functions with values in $\mathbb{R}^n$ and spaces of functions with values in $\mathbb{R}^n$. Here we denote by $\mathbf{L}^2(\Omega)$ the space $(L^2(\Omega))^n=L^2(\Omega;\mathbb{R}^n)$. Similarly the spaces $(H_{0}^1(\Omega))^n$ and $(H^{-1}(\Omega))^n$ are $\mathbf{H_{0}^1}(\Omega)$ and $\mathbf{H^{-1}}(\Omega)$.  We also introduce different spaces of divergence free functions and some corresponding trace spaces :
  \begin{align*}
   \mathcal{V}(\Omega)&=\{ \mathbf{u}\in \mathbf{C}_{c}^{\infty}(\Omega)|\mbox{ div}\, \mathbf{u}=0\mbox{ in } \Omega \} ,\\   
   \textbf{V}_{0}^{1}(\Omega)&= \{ \mathbf{u}\in \mathbf{H}_{0}^{1}(\Omega)|\mbox{ div}\, \mathbf{u}=0\mbox{ in } \Omega,\, \mathbf{u}=0 \mbox{ on }\Gamma  \} ,\\
  \textbf{V}^{2}(\Omega)&=\{ \mathbf{u}\in \mathbf{H}^2(\Omega)|\mbox{ div}\, \mathbf{u}=0 \mbox{ in } \Omega \} ,\\
   \textbf{V}_{n}^{0}(\Omega)&=\{ \mathbf{u}\in \mathbf{L}^2(\Omega)|\mbox{ div}\, \mathbf{u}=0 \mbox{ in } \Omega,\, \mathbf{u}.n=0 \mbox{ on } \Gamma\} ,\\
\mathbf{V}^{1}(\Gamma)&=\{\mathbf{u}\in \mathbf{H}^{1}(\Omega)| \int_{\Gamma}\mathbf{u.n}=0 \mbox{ on }\Gamma \} .   
\end{align*}
Similarly we can define $\mathbf{V}^{s}(\Omega),\mathbf{V}_{n}^{s}(\Omega),\mathbf{V}_{0}^{s}(\Omega),\mathbf{V}^{s}(\Gamma)$ for $s\geq 0$. For $s<0, \mathbf{V}^{s}(\Gamma)$ is the dual space of $\mathbf{V}^{-s}(\Gamma)$, with $\mathbf{V}^{0}(\Gamma)$ as pivot space. For notational convenience, we define the space  
\begin{align*}
\mathcal{H}^{s}(\Omega) = 
\begin{cases}
H^{s}(\Omega)/\R \quad &\mbox{if }s>0 ,\\
(H^{-s}(\Omega)/\R)' \quad &\mbox{if }s<0 .
\end{cases}
\end{align*}
We introduce here some spaces for time dependent functions :  For $s \geq 0$ and $\sigma \geq 0$, define
\begin{align*}
\V^{s, \sigma}(Q)&= H^{\sigma}(0,T; \V^{0}(\Omega)) \cap L^{2}(0,T; \V^{s}(\Omega)),\\
\V^{s, \sigma}(\Sigma)&= H^{\sigma}(0,T; \V^{0}(\Gamma)) \cap L^{2}(0,T; \V^{s}(\Gamma)).
\end{align*}
We also introduce 
\begin{align*}
W(0,T;X,Y)=\{\mathbf{u}\in L^{2}(0,T; X)|\, \mathbf{u}' \in L^{2}(0,T;Y) \},
\end{align*}
where $X,Y$ are two Banach spaces.

 Let us denote by P, the orthogonal projection operator from $\mathbf{L}^{2}(\Omega)$ onto $\mathbf{V}_{n}^{0}(\Omega)$( see \cite[Chap III, Theorem 1.1]{GALDI} for details). We recall that for all $s\in[0,2]$ $P \in \mathcal{L}(\mathbf{H}^{s}(\Omega), \mathbf{V}^{s}_{n}(\Omega))$ (see \cite[Chap III, Lemma 1.2]{GALDI}). The operator P can be extended to a bounded operator from $\mathbf{H}^{-1}(\Omega)$ to $\V^{-1}(\Omega)$, that we still denote by P.

Throughout this paper, the letter C denotes a positive constant that may change from line to line. When particular positive constants are required, we use $C_{0}, C_{1}$, etc. If H is a Hilbert space, we denote by $||.||_{H}$ its corresponding norm, by $H^{'}$ its dual space and by $\langle . , . \rangle_{H^{'},H}$ the $H^{'}-H$ duality pairing. For any two Hilbert spaces $H_{1}$ and $H_{2}$ we use the notation $H_{1} \hookrightarrow H_{2}$ to indicate that $H_{1}$ is continuously embedded into $H_{2}$.  We denote by $\mathcal{L}(H_{1},H_{2})$ the space of all bounded linear operators from $H_{1}$ to $H_{2}$ and we use the notation $\mathcal{L}(H)$ for $\mathcal{L}(H,H)$.

\bigskip
%The title of your fifth subsection in section 2
%\subsection{Example of inserting a Figure}

%\begin{figure}[htp]
%\begin{center}
%  % Requires \usepackage{graphicx}
%  % replace aims_logo.eps by your figure file name
%  \includegraphics[width=2in]{aims_logo.eps}\\
%  \caption{Here is the Caption of your figure}\label{AIMS}
%  \end{center}
%\end{figure}

%The title of your section 3
\section{ Steady linearized Boussinesq system }
%Let $(\bz_{s},\theta_{s},q_{s})$ be a solution to the following stationary nonlinear Boussinesq system:
We first recall the existence result for stationary Boussinesq system with homogeneous boundary conditions :
\begin{align*}
-\nu \Delta \mathbf{z}_{s}+(\mathbf{z}_{s}.\nabla)\mathbf{z}_{s}+\nabla q_{s} &= \mathbf{\beta}\theta_{s} + \mathbf{f}_{1} \quad \mbox{in }\Omega,\\ \mbox{div}\, \mathbf{z}_{s}=0 \quad \mbox{in  }\Omega,\\ -\mu\Delta \theta_{s} + (\mathbf{z}_{s}.\nabla)\theta_{s}=f_{2} \quad \mbox{in }\Omega, \\
\mathbf{z}_{s}=0\quad \mbox{on }\Gamma; \quad \frac{\partial \theta_{s}}{\partial \mathbf{n}}=0 \mbox{  on }\Gamma , 
\end{align*} 
where  $\bm{\beta}\in \mathbf{L}^{\infty}(\Omega)$ is a vector. Then from \cite[Proposition 2.3]{lee2000analysis}, we know that for $\mathbf{f}_{1} \in \mathbf{L}^{2}(\Omega)$ and $f_{2} \in L^{2}(\Omega)$, there exists $(\mathbf{z}_{s},\theta_{s},q_{s})\in \mathbf{V}_{0}^{1}(\Omega)\cap \mathbf{H}^{2}(\Omega)\times H^{2}(\Omega)\times L^{2}(\Omega)/\R \cap H^{1}(\Omega).$\\

For $\lambda_{0}>0$, let us consider steady linearised Boussinesq System (linearized around $(\bz_{s},\theta_{s})$) with homogeneous boundary condition :
\begin{equation}\label{eq:steadyboussinesq}
\left. \begin{aligned}
 & \lambda_{0}\mathbf{u} - \Delta \mathbf{u} + (\mathbf{u}.\nabla)\mathbf{z}_{s} + (\mathbf{z}_{s}.\nabla)\mathbf{u} + \nabla p= \bm {\beta}\phi+\mathbf{f}_{1} \mbox{ in } \Omega,  \\ & \mbox{div }\mathbf{u}=0 \mbox{ in }\Omega;\quad \mathbf{u}=0 \mbox{ on } \Gamma,  \\& \lambda_{0}\phi-\Delta\phi + \mathbf{z}_{s}.\nabla\phi +\mathbf{u}.\nabla\theta_{s} =f_{2} \mbox{ in }\Omega,  \\& \frac{\partial\phi}{\partial n} = 0 \mbox{ on } \Gamma.  
\end{aligned}
\right\}
\end{equation}

In this section, for notational convenience, we will use $\bz_{s}=\bz$ and $\theta_{s}=\theta$.  Without loss of generality we will assume that $\nu= \mu= 1$.
\begin{definition}\label{homogeneous}
$(\mathbf{u},\phi)\in \mathbf{V}_{0}^{1}(\Omega)\times H^{1}(\Omega)$ is a \textbf{weak solution} to equation \eqref{eq:steadyboussinesq} if for every $(\mathbf{\Phi},\psi)\in \mathbf{V}_{0}^{1}(\Omega)\times H^{1}(\Omega)$ and $(\mathbf{f}_{1},f_{2})\in \mathbf{L}^{2}(\Omega) \times L^{2}(\Omega)$ :
\begin{align*}
&\lambda_{0}\int_{\Omega} \bu.\mathbf{\Phi}+ \int_{\Omega}\nabla \mathbf{u}.\nabla \mathbf{\Phi}+\int_{\Omega}((\mathbf{u}.\nabla) \bz).\mathbf{\Phi}+\int_{\Omega}((\mathbf{z}.\nabla)\mathbf{u}).\mathbf{\Phi}-\int_{\Omega} \bm{\beta}\phi.\mathbf{\Phi} \notag\\&+\lambda_{0}\int_{\Omega}\phi\psi+\int_{\Omega}\nabla \phi.\nabla \psi +\int_{\Omega}(\mathbf{z}.\nabla \phi)\psi +\int_{\Omega} (\mathbf{u}.\nabla \theta)\psi=\int_{\Omega} \mathbf{f}_{1}.\mathbf{\Phi}+ \int_{\Omega}f_{2}\psi.
\end{align*}
\end{definition}
We have the following existence theorem for weak solution :
\begin{theorem}\label{existenceboussi}
Let $(\bz,\theta)\in \V^{1}(\Omega)\times H^{1}(\Omega)$. Then there exists large $\lambda_{0}>0$ for which there exists unique weak solution $(\mathbf{u},p,\phi)\in \mathbf{V}_{0}^{1}(\Omega)\times L^{2}(\Omega)/ \R\times H^{1}(\Omega)$ to equation \eqref{eq:steadyboussinesq} for all $(\mathbf{f}_{1},f_{2})\in \mathbf{L}^{2}(\Omega) \times L^{2}(\Omega)$. 
\end{theorem}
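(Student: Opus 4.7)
The natural route is the Lax--Milgram lemma on the Hilbert space $X=\mathbf V_0^1(\Omega)\times H^1(\Omega)$ equipped with the product norm. From Definition~\ref{homogeneous} I would set $a\bigl((\bu,\phi),(\PHI,\psi)\bigr)$ equal to the left-hand side of the weak formulation and $L(\PHI,\psi):=\int_\Omega \mathbf f_1\cdot\PHI+\int_\Omega f_2\psi$ for the right-hand side. Continuity of $L$ is Cauchy--Schwarz, and continuity of $a$ follows from H\"older's inequality combined with the Sobolev embedding $H^1(\Omega)\hookrightarrow L^6(\Omega)$ (and $\hookrightarrow L^p$ for every $p<\infty$ in 2D) applied to each trilinear contribution involving $\bz$ or $\theta$; the buoyancy coupling uses $\bm\beta\in\mathbf L^\infty(\Omega)$.

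The real work is coercivity. Testing with $(\PHI,\psi)=(\bu,\phi)$ gives
\[
a\bigl((\bu,\phi),(\bu,\phi)\bigr)=\lambda_0\|\bu\|_{L^2}^2+\|\nabla\bu\|_{L^2}^2+\lambda_0\|\phi\|_{L^2}^2+\|\nabla\phi\|_{L^2}^2+T_1+T_2+T_3+T_4+T_5,
\]
where I label the coupling terms in the order in which they appear in Definition~\ref{homogeneous}. Two are free: $T_2=\int_\Omega((\bz\cdot\nabla)\bu)\cdot\bu=0$ by integration by parts using $\operatorname{div}\bz=0$ and $\bu|_\Gamma=0$, while $T_4=\int_\Omega(\bz\cdot\nabla\phi)\phi$ reduces to the boundary integral $\tfrac12\int_\Gamma(\bz\cdot\mathbf n)\phi^2$, which vanishes because the stationary base state recalled at the beginning of Section~3 satisfies $\bz|_\Gamma=0$. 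For $T_1=\int_\Omega((\bu\cdot\nabla)\bz)\cdot\bu$ I would estimate $|T_1|\le\|\bu\|_{L^4}^2\|\nabla\bz\|_{L^2}$, apply the Gagliardo--Nirenberg interpolation $\|\bu\|_{L^4}^2\le C\|\bu\|_{L^2}^{1/2}\|\nabla\bu\|_{L^2}^{3/2}$ (valid for $\bu\in\mathbf H_0^1$ when $N\le 3$), and use Young's inequality with a small parameter $\epsilon>0$ to obtain $|T_1|\le\epsilon\|\nabla\bu\|_{L^2}^2+C(\epsilon,\|\bz\|_{H^1})\|\bu\|_{L^2}^2$. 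An identical argument, using also $\|\phi\|_{L^4}^2\le C\|\phi\|_{L^2}\|\phi\|_{H^1}$ in one factor, handles $T_5=\int_\Omega(\bu\cdot\nabla\theta)\phi$, while $T_3=-\int_\Omega\bm\beta\phi\cdot\bu$ is disposed of by $\bm\beta\in\mathbf L^\infty$ and Cauchy--Schwarz.

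Choosing $\epsilon$ small enough that the gradient remainders are absorbed into half of the principal dissipation $\|\nabla\bu\|_{L^2}^2+\|\nabla\phi\|_{L^2}^2$, and then picking $\lambda_0$ larger than the resulting constant (which depends on $\|\bz\|_{H^1}$, $\|\theta\|_{H^1}$, $\|\bm\beta\|_{\mathbf L^\infty}$, $N$, and $\Omega$), I would conclude
\[
a\bigl((\bu,\phi),(\bu,\phi)\bigr)\ge c\bigl(\|\bu\|_{H^1}^2+\|\phi\|_{H^1}^2\bigr)
\]
for some $c>0$. Lax--Milgram then yields a unique $(\bu,\phi)\in\mathbf V_0^1(\Omega)\times H^1(\Omega)$. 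The pressure is recovered afterwards: restricting the weak formulation to test pairs $(\PHI,0)$ with $\PHI\in\mathbf V_0^1(\Omega)$ shows that the distribution $\lambda_0\bu-\Delta\bu+(\bu\cdot\nabla)\bz+(\bz\cdot\nabla)\bu-\bm\beta\phi-\mathbf f_1\in\mathbf H^{-1}(\Omega)$ annihilates every divergence-free vector field vanishing on $\Gamma$, so by de Rham's theorem it equals $-\nabla p$ for a unique $p\in L^2(\Omega)/\R$.

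The step I expect to be the main obstacle is this coercivity estimate, precisely because the base state $(\bz,\theta)$ has only $H^1$ regularity: the Gagliardo--Nirenberg/Young splitting of $T_1$ and $T_5$ must be calibrated so that only an arbitrarily small fraction of $\|\nabla\bu\|_{L^2}^2+\|\nabla\phi\|_{L^2}^2$ is spent, with the residue falling into a purely $L^2$ term that $\lambda_0$ can dominate. This is exactly why the statement requires $\lambda_0$ to be sufficiently large (depending on the $H^1$ sizes of $\bz$ and $\theta$) rather than working for every $\lambda_0>0$.
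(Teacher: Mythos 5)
Your proposal is correct and follows essentially the same route as the paper: Lax--Milgram on $\mathbf V_0^1(\Omega)\times H^1(\Omega)$, the antisymmetry identities killing $\int_\Omega((\bz\cdot\nabla)\bu)\cdot\bu$ and $\int_\Omega(\bz\cdot\nabla\phi)\phi$, Gagliardo--Nirenberg plus Young to absorb the terms involving $\nabla\bz$ and $\nabla\theta$ into a small fraction of the dissipation at the cost of $L^2$ terms controlled by a large $\lambda_0$, and de Rham's theorem to recover $p\in L^2(\Omega)/\R$. The only differences are cosmetic (you use the standard $L^4$ interpolation exponents where the paper uses a $7/4$--$1/4$ splitting of the $\mathbf V_0^1$ norm), and you are in fact slightly more careful than the paper in noting that the vanishing of the boundary term in $\int_\Omega(\bz\cdot\nabla\phi)\phi$ relies on the base state having zero trace.
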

\begin{proof}
Define the bilinear form on $\mathbf{V}_{0}^{1}(\Omega)\times H^{1}(\Omega)$  : 
\begin{align*}
a((\mathbf{u},\phi),(\mathbf{\Phi},\psi))=&\lambda_{0}\int_{\Omega}\mathbf{u.\Phi}+ \int_{\Omega}\nabla \mathbf{u}.\nabla \mathbf{\Phi}+\int_{\Omega}((\mathbf{u}.\nabla) \bz).\mathbf{\Phi}+\int_{\Omega}((\mathbf{z}.\nabla)\mathbf{u}).\mathbf{\Phi}-\int_{\Omega} \bm{\beta}\phi.\mathbf{\Phi} \notag\\&+\lambda_{0}\int_{\Omega}\phi\psi+\int_{\Omega}\nabla \phi.\nabla \psi +\int_{\Omega}(\mathbf{z}.\nabla \phi)\psi + \int_{\Omega} (\mathbf{u}.\nabla \theta)\psi
\end{align*}
This bilinear form is continuous on $\mathbf{V}_{0}^{1}(\Omega)\times H^{1}(\Omega)$. We want to show that it is coercive if $\lambda_{0}>0$ is large. For that we have to consider 
\begin{align*}
a((\mathbf{u},\phi),(\mathbf{u},\phi))=&\lambda_{0}\int_{\Omega}|\mathbf{u}|^{2}+ \int_{\Omega}|\nabla \mathbf{u}|^{2}+\int_{\Omega}((\mathbf{u}.\nabla)\mathbf{z}).\bu+\int_{\Omega}((\mathbf{z}.\nabla)\mathbf{u}).\mathbf{u}-\int_{\Omega} \bm{\beta}\phi.\mathbf{u} \notag\\&+\lambda_{0}\int_{\Omega}\phi^{2}+\int_{\Omega}|\nabla \phi|^{2} +\int_{\Omega}(\mathbf{z}.\nabla \phi)\phi + \int_{\Omega} (\mathbf{u}.\nabla \theta)\phi .
\end{align*} 
Now we will estimate term by term. Let us start with the third term and by H$\ddot{o}$lder's inequality :
\begin{align*}
\int_{\Omega}|((\mathbf{u}.\nabla)\mathbf{z}).\bu| \leq ||\mathbf{u}||^{2}_{\mathbf{L}^{4}(\Omega)}||\mathbf{z}||_{H^{1}(\Omega)}.
\end{align*}
Now observe that :
\begin{align*}
\int_{\Omega}((\mathbf{z}.\nabla)\mathbf{u}).\mathbf{\Phi}=-\int_{\Omega}((\mathbf{z}.\nabla)\mathbf{\Phi}).\mathbf{u}\,\, .
\end{align*}
Hence, we get  : $\displaystyle \int_{\Omega}((\mathbf{z}.\nabla)\mathbf{u}).\mathbf{u}=0$. Due to the similar reasoning, we also have :
\begin{align*}
\int_{\Omega}(\mathbf{z}.\nabla \phi)\phi=0 .
\end{align*}
Also by H$\ddot{o}$lder's inequality we have :
\begin{align*}
\int_{\Omega} \mathbf{\beta}\phi.\mathbf{u}&\leq ||\bm{\beta}||_{\infty}(\int_{\Omega}\phi^{2})^{1/2}(\int_{\Omega}|\mathbf{u}|^{2})^{1/2}\\& \leq \frac{||\bm{\beta}||_{\infty}}{2}(\int_{\Omega} \phi^{2}+\int_{\Omega}|\mathbf{u}|^{2}).
\end{align*}

 By using the fact that $H^{1}(\Omega)\hookrightarrow L^{4}(\Omega)$, the other coupling term gives : 
\begin{align*}
\displaystyle \int_{\Omega} |(\mathbf{u}.\nabla \theta)\phi | &\leq ||\theta||_{H^{1}(\Omega)}||\phi||_{L^{4}(\Omega)}||\mathbf{u}||_{\mathbf{L}^{4}(\Omega)}\\& \leq ||\theta||_{H^{1}(\Omega)}||\phi||_{H^{1}(\Omega)}||\mathbf{u}||_{\mathbf{L}^{4}(\Omega)} .
\end{align*}
So, we obtain :
\begin{align*}
a((\mathbf{u},\phi),(\mathbf{u},\phi)) \geq\, &\lambda_{0}\int_{\Omega}|\mathbf{u}|^{2}+ \int_{\Omega}|\nabla \mathbf{u}|^{2}-||\mathbf{u}||^{2}_{\mathbf{L}^{4}(\Omega)}||\mathbf{z}||_{\mathbf{H}^{1}(\Omega)}-\frac{||\bm{\beta}||_{\infty}}{2}(\int_{\Omega} \phi^{2}+\int_{\Omega}|\mathbf{u}|^{2})\\&+\lambda_{0}\int_{\Omega}\phi^{2}+\int_{\Omega}|\nabla \phi|^{2}-||\theta||_{H^{1}(\Omega)}||\phi||_{H^{1}(\Omega)}||\mathbf{u}||_{\mathbf{L}^{4}(\Omega)} .
\end{align*}

Now by using  $\V^{1}_{0}(\Omega)\hookrightarrow \mathbf{L}^{4}(\Omega)$, we will have :
\begin{align*}
||\mathbf{u}||^{2}_{\mathbf{L}^{4}(\Omega)}||\mathbf{z}||_{\mathbf{H}^{1}(\Omega)} &\leq C||\mathbf{u}||^{2}_{\mathbf{V}^{1}_{0}(\Omega)}||\mathbf{z}||_{\mathbf{H}^{1}(\Omega)}\\ & =C||\mathbf{u}||^{7/4}_{\mathbf{V}^{1}_{0}(\Omega)}||\bu||^{1/4}_{\V^{1}_{0}(\Omega)}||\mathbf{z}||_{\mathbf{H}^{1}(\Omega)} .
\end{align*}
Thus we will get :
\begin{align*}
-||\mathbf{u}||^{2}_{\mathbf{L}^{4}(\Omega)}||\mathbf{z}||_{\mathbf{H}^{1}(\Omega)}&\geq -C||\bu||^{7/4}_{\V^{1}_{0}(\Omega)}\big(||\bz||_{\mathbf{H}^{1}(\Omega)}||\bu||^{1/4}_{\V_{n}^{0}(\Omega)}\big)\\& \geq -\frac{1}{4}||\bu||^{2}_{\V^{1}_{0}(\Omega)}-\frac{7^{7}\times C^{8}}{8 \times 2^{7}}||\bz||^{8}_{\mathbf{H}^{1}(\Omega)}||\bu||^{2}_{\V^{0}_{n}(\Omega)},
\end{align*}
where the last inequality will come from Young's inequality.
Similarly, we can estimate :
\begin{align*}
||\theta||_{H^{1}(\Omega)}||\mathbf{u}||_{\mathbf{L}^{4}(\Omega)}||\phi||_{H^{1}(\Omega)}&\leq \frac{1}{2}\Big(||\theta||^{2}_{H^{1}(\Omega)}||\bu||^{2}_{L^{4}(\Omega)}+||\phi||^{2}_{H^{1}(\Omega)}\Big)\\&=\frac{1}{2}||\theta||^{2}_{H^{1}(\Omega)}||\bu||^{1/4}_{L^{4}(\Omega)}||\bu||^{7/4}_{L^{4}(\Omega)}+\frac{1}{2}||\phi||^{2}_{H^{1}(\Omega)}
\end{align*}
Hence,
\begin{align*}
-||\theta||_{H^{1}(\Omega)}||\phi||_{H^{1}(\Omega)}||\mathbf{u}||_{\mathbf{L}^{4}(\Omega)}&\geq-\frac{1}{2}||\phi||^{2}_{H^{1}(\Omega)}-C_{1} ||\bu||^{7/4}_{\V^{1}_{0}(\Omega)}||\theta||^{2}_{H^{1}(\Omega)}||\bu||^{1/4}_{\V_{n}^{0}(\Omega)}\\&\geq -\frac{1}{2}||\phi||^{2}_{H^{1}(\Omega)} -\frac{1}{4}||\bu||^{2}_{\V^{1}_{0}(\Omega)}-\frac{7^{7}\times C_{1}^{8}}{8 \times 2^{7}}||\theta||^{16}_{H^{1}(\Omega)}||\bu||^{2}_{\V^{0}_{n}(\Omega)}
\end{align*}
Thus we will get :
\begin{align*}
a((\mathbf{u},\phi),(\mathbf{u},\phi)) \geq &\lambda_{0}\int_{\Omega}|\mathbf{u}|^{2}+ \int_{\Omega}|\nabla \mathbf{u}|^{2} -\frac{1}{4}||\bu||^{2}_{\V^{1}_{0}(\Omega)}-\frac{7^{7}\times C^{8}}{8 \times 2^{7}}||\bz||^{8}_{\mathbf{H}^{1}(\Omega)}||\bu||^{2}_{\V^{0}_{n}(\Omega)}\\ & -\frac{||\bm{\beta}||_{\infty}}{2}(\int_{\Omega} \phi^{2}+\int_{\Omega}|\mathbf{u}|^{2})+\lambda_{0}\int_{\Omega}\phi^{2}+\int_{\Omega}|\nabla \phi|^{2}-\frac{1}{2}||\phi||^{2}_{H^{1}(\Omega)}\\& -\frac{1}{4}||\bu||^{2}_{\V^{1}_{0}(\Omega)}-\frac{7^{7}\times C_{1}^{8}}{8 \times 2^{7}}||\theta||^{16}_{H^{1}(\Omega)}||\bu||^{2}_{\V^{0}_{n}(\Omega)}.
\end{align*}
Now it is sufficient to choose  $\lambda_{0}\geq \max\{1+\frac{7^{7}\times C^{8}}{8 \times 2^{7}}||\bz||^{8}_{\mathbf{H}^{1}(\Omega)}+\frac{||\bm{\beta}||_{\infty}}{2},\, 1+\frac{7^{7}\times C_{1}^{8}}{8 \times 2^{7}}||\theta||^{16}_{H^{1}(\Omega)}+\frac{||\bm{\beta}||_{\infty}}{2}\}$,  such that following holds:
\begin{align*}
a((\mathbf{u},\phi),(\mathbf{u},\phi))\geq \frac{1}{2}\Big(||\mathbf{u}||_{\mathbf{V}^{1}_{0}(\Omega)}^{2}+||\phi||_{H^{1}(\Omega)}^{2}\Big).
\end{align*}   
So by Lax-Milgram theorem we have existence of the weak solution $(\mathbf{u},\phi)\in \mathbf{V}_{0}^{1}(\Omega)\times H^{1}(\Omega)$ to equation \eqref{eq:steadyboussinesq} for all $(\mathbf{f}_{1},f_{2})\in \mathbf{L}^{2}(\Omega) \times L^{2}(\Omega)$  . Then by applying de Rham theorem, we can recover pressure $p \in L^{2}(\Omega)/ \R$ as in the steady Stokes system in \cite{GALDI}.  
\end{proof}
\begin{theorem}\label{steadyRegularity}
For all $(\mathbf{f}_{1},f_{2})\in \mathbf{L}^{2}(\Omega) \times L^{2}(\Omega)$, the weak solution $(\mathbf{u},p,\phi)$ for system \eqref{eq:steadyboussinesq} given by theorem \eqref{existenceboussi} belongs to $\mathbf{V}^{2}(\Omega)\times \mathcal{H}^{1}(\Omega) \times H^{2}(\Omega)$.
\end{theorem}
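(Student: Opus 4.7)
The plan is to treat the coupled system \eqref{eq:steadyboussinesq} as an uncoupled pair of linear elliptic problems with right-hand sides built from the already known weak solution, and then invoke classical elliptic regularity to upgrade from $H^1$ to $H^2$. Concretely I would rewrite the velocity/pressure part as the standard steady Stokes problem
\begin{align*}
-\Delta \mathbf{u}+\nabla p &= \mathbf{F}_1\quad\text{in }\Omega,\qquad \mathrm{div}\,\mathbf{u}=0,\qquad \mathbf{u}|_\Gamma=0,\\
\mathbf{F}_1 &:= \mathbf{f}_1+\bm{\beta}\phi-\lambda_0\mathbf{u}-(\mathbf{u}\cdot\nabla)\mathbf{z}_s-(\mathbf{z}_s\cdot\nabla)\mathbf{u},
\end{align*}
and the temperature part as the scalar coercive Neumann problem
\begin{align*}
-\Delta\phi+\lambda_0\phi = F_2\quad\text{in }\Omega,\qquad \partial_n\phi|_\Gamma=0,\qquad F_2:=f_2-(\mathbf{z}_s\cdot\nabla)\phi-(\mathbf{u}\cdot\nabla)\theta_s.
\end{align*}

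The next step is to verify $\mathbf{F}_1\in\mathbf{L}^2(\Omega)$ and $F_2\in L^2(\Omega)$. The linear pieces $\mathbf{f}_1,\bm{\beta}\phi,\lambda_0\mathbf{u},\lambda_0\phi,f_2$ are immediate from the hypotheses and from $\mathbf{u},\phi\in H^1\hookrightarrow L^2$. For the convective couplings I would use the $H^2$-regularity of the base state $(\mathbf{z}_s,\theta_s)$ already recalled from \cite{lee2000analysis}, which in dimension $N\le 3$ gives $\mathbf{z}_s,\theta_s\in L^\infty(\Omega)$ and $\nabla\mathbf{z}_s,\nabla\theta_s\in L^6(\Omega)$. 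Combined with $\mathbf{u},\phi\in H^1\hookrightarrow L^6$, H\"older's inequality controls $(\mathbf{z}_s\cdot\nabla)\mathbf{u}$ and $(\mathbf{z}_s\cdot\nabla)\phi$ in $L^2$ via $L^\infty\cdot L^2$, and $(\mathbf{u}\cdot\nabla)\mathbf{z}_s$, $(\mathbf{u}\cdot\nabla)\theta_s$ in $L^3(\Omega)\subset L^2(\Omega)$ via $L^6\cdot L^6$ on the bounded domain $\Omega$.

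I would then conclude by applying the classical regularity theorem for the steady Stokes system on a smooth bounded domain (Cattabriga, see \cite{GALDI}): $\mathbf{F}_1\in\mathbf{L}^2(\Omega)$ yields $\mathbf{u}\in\mathbf{V}^2(\Omega)$, together with the recovery of the pressure via de Rham as in Theorem~\ref{existenceboussi}, which upgrades $p$ to $\mathcal{H}^1(\Omega)=H^1(\Omega)/\mathbb{R}$; and the standard $H^2$-regularity for the coercive Neumann Laplacian on a smooth domain: $F_2\in L^2(\Omega)$ yields $\phi\in H^2(\Omega)$.

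The main obstacle is making sure the convective right-hand sides lie in $L^2$: naively using only $\mathbf{V}^1\times H^1$-regularity of the base state $(\mathbf{z}_s,\theta_s)$ would place $(\mathbf{u}\cdot\nabla)\mathbf{z}_s$ only in $L^{3/2}(\Omega)$ in three dimensions, which is not enough for the $H^2$-bootstrap. The key point that unlocks the argument is therefore to invoke the stronger $H^2$-regularity of $(\mathbf{z}_s,\theta_s)$ coming from the stationary existence theorem recalled at the start of Section~3, and I would state this dependence explicitly.
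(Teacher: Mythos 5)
Your proof is correct in the setting you specify, but it takes a genuinely different route from the paper's, and the difference is precisely the generality of the hypothesis on the linearization state. The paper keeps the convective terms on the left: for the velocity it invokes the Oseen regularity lemma \cite[Appendix B, Lemma B.1]{RAMO}, which gives $(\bu,p)\in \V^{2}(\Omega)\times \mathcal{H}^{1}(\Omega)$ for an $\mathbf{L}^{2}$ right-hand side assuming only $\bz\in \V^{1}(\Omega)$ (internally this is a bootstrap through $\mathbf{W}^{2,3/2}$), and for the temperature it first uses the freshly obtained $\bu\in \mathbf{H}^{2}(\Omega)\hookrightarrow \mathbf{L}^{\infty}(\Omega)$ so that $\bu.\nabla\theta\in L^{2}(\Omega)$ with $\theta$ only in $H^{1}(\Omega)$, treating $\bz.\nabla\phi$ by elliptic regularity with $\bz\in\V^{1}(\Omega)$ only. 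You instead move all convective terms to the right-hand side and pay for this with the $H^{2}$-regularity of the stationary state $(\mathbf{z}_{s},\theta_{s})$ from \cite{lee2000analysis}; your H\"older estimates ($L^{\infty}\cdot L^{2}$ and $L^{6}\cdot L^{6}\subset L^{3}\subset L^{2}$) and the appeal to Cattabriga's Stokes regularity plus $H^{2}$-regularity of the Neumann problem are then fine, so your argument is valid when $(\bz,\theta)=(\mathbf{z}_{s},\theta_{s})$, which is the literal convention of Section 3. What you lose is that Theorem \ref{existenceboussi}, to which the statement refers, assumes only $(\bz,\theta)\in\V^{1}(\Omega)\times H^{1}(\Omega)$, and it is this weaker hypothesis that is carried into the later uses of the regularity result (Theorem \ref{boussi2}, the transposition Theorem \ref{transposition1} and the lifting Theorem \ref{liftboussinesq} all assume only $\V^{1}\times H^{1}$); under that hypothesis your reduction fails exactly where you noted, since $(\bu.\nabla)\bz$ is then only in $\mathbf{L}^{3/2}(\Omega)$. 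So your approach buys a more elementary proof (no Oseen lemma, no bootstrap) at the price of a stronger assumption on the base state; to recover the statement in the generality the paper actually needs you would either perform the bootstrap yourself or cite \cite[Appendix B, Lemma B.1]{RAMO} as the paper does, and in any case you should state explicitly which hypothesis on $(\bz,\theta)$ your version uses.
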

\begin{proof}
We already know the existence of the solution $(\mathbf{u}, p, \phi)\in \mathbf{V}^{1}_{0}(\Omega)\times L^{2}(\Omega)/\R \times H^{1}(\Omega)$ from theorem \ref{existenceboussi}. So, consider 
\begin{align*}
\lambda_{0}\mathbf{u} - \Delta \mathbf{u} + (\mathbf{u}.\nabla)\mathbf{z} + (\mathbf{z}.\nabla)\mathbf{u} + \nabla p= \bm{\beta}\phi+\mathbf{f}_{1} \mbox{ in } \Omega
\end{align*}
Here as $(\bm{\beta}\phi+\mathbf{f}_{1})\in \mathbf{L}^{2}(\Omega)$, using the regularity result \cite[Appendix B, lemma B.1]{RAMO} we can conclude that $(\mathbf{u},p)\in \mathbf{V}^{2}(\Omega)\times (\mathcal{H}^{1}(\Omega)/\mathbb{R})$. Then consider the equation 
\begin{align*}
\lambda_{0}\phi- \Delta\phi + \mathbf{z}.\nabla\phi =f_{2}-\mathbf{u}.\nabla\theta\quad \mbox{ in   }\Omega .
\end{align*}
Here $\bu \in \mathbf{H}^{2}(\Omega) \hookrightarrow \mathbf{L}^{\infty}(\Omega)$ for two and three dimensions. So, $\bu.\nabla \theta \in L^{2}(\Omega)$. By elliptic regularity we can conclude that $\phi \in H^{2}(\Omega)$. 
\end{proof}
Now onwards we fix $\lambda_{0}$ as in theorem \ref{existenceboussi}. We consider linearised Boussinesq system with non homogeneous boundary data :
  \begin{equation} \label{eq:steadyboussinesq1}
\left. \begin{aligned}
 & \lambda_{0}\mathbf{u} - \Delta \mathbf{u} + (\mathbf{u}.\nabla)\mathbf{z} + (\mathbf{z}.\nabla)\mathbf{u} + \nabla p= \bm{\beta}\phi+\mathbf{f}_{1} \mbox{ in } \Omega,  \\ & \mbox{div }\mathbf{u}=0 \mbox{ in }\Omega;\quad \mathbf{u}=\mathbf{g} \mbox{ on } \Gamma , \\& \lambda_{0}\phi- \Delta\phi + \mathbf{z}.\nabla\phi +\mathbf{u}.\nabla\theta =f_{2} \mbox{ in }\Omega , \\& \frac{\partial\phi}{\partial n} = h \mbox{ on } \Gamma .
 \end{aligned}
\right\} 
\end{equation}
We first prove the existence of a unique solution when $(\bg,h)\in \V^{3/2}(\Gamma)\times H^{1/2}(\Gamma)$.
If $\mathbf{g}\in \mathbf{V}^{3/2}(\Gamma)$, we can define the Dirichlet operator as  $D_{z}\mathbf{g}=(\mathbf{w}, \pi)$ using the solution to equation:
 \begin{align}\label{wpi}
 &\lambda_{0}\mathbf{w}-\Delta\mathbf{w}+(\mathbf{w}.\nabla)\mathbf{z}+(\mathbf{z}.\nabla)\mathbf{w}+\nabla{\pi}=0 \mbox{ in }\Omega ,\notag\\& \mbox{ div}\,\mathbf{w}=0\mbox{ in }\Omega ,\quad \mathbf{w}=\mathbf{g}\mbox{ on }\Gamma.
 \end{align} 
Then $D_{z}$ is  linear and continuous from $\mathbf{V}^{3/2}(\Gamma)$ to $\mathbf{V}^{2}(\Omega) \times \mathcal{H}^{1}(\Omega)/\R$. For details see \cite[Appendix B, Corollary B.1]{RAMO}. \\ Then consider $\widetilde{\mathbf{w}}=\mathbf{u-w},\widetilde{p}=p-\pi$. Here $(\widetilde{\mathbf{w}},\widetilde{p})$ satisfies : 
 \begin{align*} 
 & \lambda_{0}\widetilde{\mathbf{w}} - \Delta \widetilde{\mathbf{w}} + (\widetilde{\mathbf{w}}.\nabla)\mathbf{z} + (\mathbf{z}.\nabla)\widetilde{\mathbf{w}} + \nabla \widetilde{p}= \bm {\beta}\phi+\mathbf{f}_{1} \mbox{ in } \Omega, \notag \\ & \mbox{div }\widetilde{\mathbf{w}}=0 \mbox{ in }\Omega;\quad \widetilde{\mathbf{w}}=0 \mbox{ on } \Gamma .\notag
 \end{align*}
 Now system \eqref{eq:steadyboussinesq1} can be reduced to the following system:
  \begin{equation}\label{newboussi} 
  \left. \begin{aligned}
 & \lambda_{0}\widetilde{\mathbf{w}} - \Delta \widetilde{\mathbf{w}} + (\widetilde{\mathbf{w}}.\nabla)\mathbf{z} + (\mathbf{z}.\nabla)\widetilde{\mathbf{w}} + \nabla \widetilde{p}= \bm {\beta}\phi+\mathbf{f}_{1} \mbox{ in } \Omega , \\ & \mbox{div }\widetilde{\mathbf{w}}=0 \mbox{ in }\Omega;\quad \widetilde{\mathbf{w}}=0 \mbox{ on } \Gamma , \\& \lambda_{0}\phi- \Delta\phi + \mathbf{z}.\nabla\phi +\widetilde{\mathbf{w}}.\nabla\theta =f_{2}-\mathbf{w}.\nabla \theta= \widetilde{f_{2}} \mbox{ in }\Omega , \\& \frac{\partial\phi}{\partial n} = h \mbox{ on } \Gamma .
 \end{aligned}
\right\} 
\end{equation}
For this system with inhomogeneous boundary condition, we define a weak solution in the same lines as in definition \ref{homogeneous}.
  \begin{definition}\label{inhomogeneous}
Let $h\in H^{1/2}(\Gamma)$, $(\bz,\theta) \in \V^{1}(\Omega) \times H^{1}(\Omega)$ and $(\mathbf{f}_{1},f_{2})\in \mathbf{L}^{2}(\Omega) \times L^{2}(\Omega)$. Then $(\widetilde{\mathbf{w}},\tau)\in \mathbf{V}_{0}^{1}(\Omega)\times H^{1}(\Omega)$ is a \textbf{weak solution} to equation \eqref{newboussi} if for every $(\mathbf{\Phi},\psi)\in \mathbf{V}_{0}^{1}(\Omega)\times H^{1}(\Omega)$,  the following holds: 
\begin{align*}
&\lambda_{0}\int_{\Omega}\widetilde{\mathbf{w}}.\mathbf{\Phi}+ \int_{\Omega}\nabla \widetilde{\mathbf{w}}.\nabla \mathbf{\Phi}+\int_{\Omega}((\widetilde{\mathbf{w}}.\nabla)\mathbf{z}+(\mathbf{z}.\nabla)\widetilde{\mathbf{w}}).\mathbf{\Phi}-\int_{\Omega} \mathbf{\beta}\tau.\mathbf{\Phi}\\&+\lambda_{0}\int_{\Omega}\phi \psi + \int_{\Omega}\nabla \phi.\nabla \psi + \int_{\Omega}(\mathbf{z}.\nabla \phi)\psi + \int_{\Omega} (\widetilde{\mathbf{w}}.\nabla \theta)\psi=\int_{\Omega} \mathbf{f}_{1}.\mathbf{\Phi}+\int_{\Omega}\widetilde{ f_{2}}\psi+\int_{\Gamma}h\psi 
\end{align*}
\end{definition}
\begin{theorem}\label{boussi2}
Let $(\bz, \theta) \in \V^{1}(\Omega) \times H^{1}(\Omega)$. For all $(\mathbf{f}_{1},f_{2})\in \mathbf{L}^{2}(\Omega) \times L^{2}(\Omega)$ and $(\mathbf{g},h)\in \mathbf{V}^{3/2}(\Gamma)\times H^{1/2}(\Gamma)$, there exists a weak solution $(\mathbf{u},p,\phi)\in \mathbf{V}^{1}(\Omega)\times L^{2}(\Omega) \times H^{1}(\Omega)$ to equation \eqref{eq:steadyboussinesq1}. Moreover, we have $(\mathbf{u},p,\phi)\in \mathbf{V}^{2}(\Omega)\times H^{1}(\Omega) \times H^{2}(\Omega)$. 
\end{theorem}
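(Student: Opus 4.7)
The plan is to reduce \eqref{eq:steadyboussinesq1} to the homogeneous-boundary setting already handled in Theorem \ref{existenceboussi} via the Dirichlet lift $D_z$, and then to perform an elliptic bootstrap for the regularity claim. First, since $\mathbf{g}\in \mathbf{V}^{3/2}(\Gamma)$, the operator $D_z$ from \eqref{wpi} produces $(\mathbf{w},\pi)\in \mathbf{V}^{2}(\Omega)\times(\mathcal{H}^{1}(\Omega)/\mathbb{R})$ with $\mathbf{w}|_{\Gamma}=\mathbf{g}$ and $\mbox{div}\,\mathbf{w}=0$. Setting $\widetilde{\mathbf{w}}=\mathbf{u}-\mathbf{w}$, $\widetilde{p}=p-\pi$, and $\widetilde{f_{2}}=f_{2}-\mathbf{w}\cdot\nabla\theta$, the triple $(\widetilde{\mathbf{w}},\widetilde{p},\phi)$ is the natural weak solution of \eqref{newboussi}, in the sense of Definition~\ref{inhomogeneous}.

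Next I would apply Lax--Milgram to the bilinear form $a$ appearing in the proof of Theorem \ref{existenceboussi}, which is already continuous and coercive on $\mathbf{V}^{1}_{0}(\Omega)\times H^{1}(\Omega)$ for the previously fixed $\lambda_{0}$. What needs to be checked is the continuity on $\mathbf{V}^{1}_{0}(\Omega)\times H^{1}(\Omega)$ of the linear functional
\[
L(\mathbf{\Phi},\psi)=\int_{\Omega}\mathbf{f}_{1}\cdot\mathbf{\Phi}+\int_{\Omega}\widetilde{f_{2}}\,\psi+\int_{\Gamma}h\,\psi.
\]
The embedding $\mathbf{V}^{2}(\Omega)\hookrightarrow \mathbf{L}^{\infty}(\Omega)$ (valid for $N=2,3$) combined with $\nabla\theta\in \mathbf{L}^{2}(\Omega)$ gives $\mathbf{w}\cdot\nabla\theta\in L^{2}(\Omega)$, hence $\widetilde{f_{2}}\in L^{2}(\Omega)$; the boundary term is controlled by $\|h\|_{L^{2}(\Gamma)}\|\psi\|_{L^{2}(\Gamma)}$ using $H^{1/2}(\Gamma)\hookrightarrow L^{2}(\Gamma)$ and the trace theorem. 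Lax--Milgram then yields a unique $(\widetilde{\mathbf{w}},\phi)\in \mathbf{V}^{1}_{0}(\Omega)\times H^{1}(\Omega)$, de Rham's theorem recovers $\widetilde{p}\in L^{2}(\Omega)/\mathbb{R}$, and undoing the lift produces $(\mathbf{u},p,\phi)\in \mathbf{V}^{1}(\Omega)\times L^{2}(\Omega)\times H^{1}(\Omega)$.

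For the regularity statement I would bootstrap exactly as in Theorem \ref{steadyRegularity}. Rewrite the velocity equation in \eqref{newboussi} as a Stokes problem for $(\widetilde{\mathbf{w}},\widetilde{p})$ with source $\bm{\beta}\phi+\mathbf{f}_{1}-(\widetilde{\mathbf{w}}\cdot\nabla)\mathbf{z}-(\mathbf{z}\cdot\nabla)\widetilde{\mathbf{w}}$ and homogeneous Dirichlet data, and invoke \cite[Appendix B, Lemma B.1]{RAMO} to get $(\widetilde{\mathbf{w}},\widetilde{p})\in \mathbf{V}^{2}(\Omega)\times(\mathcal{H}^{1}(\Omega)/\mathbb{R})$; adding back the $\mathbf{V}^{2}$-lift gives $\mathbf{u}\in \mathbf{V}^{2}(\Omega)$ and $p\in H^{1}(\Omega)$. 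Once $\mathbf{u}\in \mathbf{H}^{2}(\Omega)\hookrightarrow \mathbf{L}^{\infty}(\Omega)$ is in hand, the right-hand side $f_{2}-\mathbf{u}\cdot\nabla\theta$ of the temperature equation lies in $L^{2}(\Omega)$, and the standard $H^{2}$-regularity theory for the inhomogeneous Neumann problem with $h\in H^{1/2}(\Gamma)$ gives $\phi\in H^{2}(\Omega)$.

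The main obstacle is making the Stokes source truly $\mathbf{L}^{2}(\Omega)$ in three dimensions: from $\mathbf{z}\in \mathbf{V}^{1}(\Omega)$ alone one only has $\nabla\mathbf{z}\in \mathbf{L}^{2}$ and $\widetilde{\mathbf{w}}\in \mathbf{L}^{6}$, which controls $(\widetilde{\mathbf{w}}\cdot\nabla)\mathbf{z}$ merely in $\mathbf{L}^{3/2}$. To close the bootstrap I would exploit the additional $\mathbf{H}^{2}\times H^{2}$ regularity of the reference profile $(\mathbf{z},\theta)$ recalled at the start of Section~\ref{marker} for the stationary Boussinesq system; then $\mathbf{z}\in \mathbf{L}^{\infty}(\Omega)$ and $\nabla\mathbf{z}\in \mathbf{L}^{6}(\Omega)$, so both transport terms lie in $\mathbf{L}^{2}(\Omega)$ and the regularity lemma applies.
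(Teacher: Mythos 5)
Your existence argument is the same as the paper's: lift $\bg$ by the Dirichlet operator $D_{\bz}$ from \eqref{wpi}, solve the reduced problem \eqref{newboussi} by Lax--Milgram with the bilinear form of Theorem \ref{existenceboussi} and the linear form containing $\widetilde{f_2}$ and the boundary term $\int_\Gamma h\psi$, recover $\widetilde{p}$ by de Rham, and undo the lift. That half is fine and matches the paper.

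The regularity half has a genuine gap. You rewrite the velocity equation as a \emph{pure Stokes} problem with $(\widetilde{\mathbf{w}}.\nabla)\bz+(\bz.\nabla)\widetilde{\mathbf{w}}$ moved into the source, correctly observe that in 3D this source is only $\mathbf{L}^{3/2}$ when $\bz\in\V^{1}(\Omega)$, and then repair this by assuming the extra $\mathbf{H}^{2}\times H^{2}$ regularity of the stationary Boussinesq state. But Theorem \ref{boussi2} is stated, and later used (e.g.\ in Theorem \ref{liftboussinesq} and in Section 4, where $L_{\bz}$ is built for a general stationary state), under the sole hypothesis $(\bz,\theta)\in\V^{1}(\Omega)\times H^{1}(\Omega)$; your patch therefore proves a strictly weaker statement. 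It is also not ``exactly as in Theorem \ref{steadyRegularity}'': there the terms $(\bu.\nabla)\bz+(\bz.\nabla)\bu$ are \emph{kept in the operator} and \cite[Appendix B, Lemma B.1]{RAMO} is invoked for the Oseen system with only $\bm{\beta}\phi+\mathbf{f}_{1}\in\mathbf{L}^{2}(\Omega)$ as data, so the $\mathbf{L}^{3/2}$ obstruction never appears. The paper's proof of Theorem \ref{boussi2} follows the same principle via the splittings \eqref{split1}--\eqref{split2}: $\bu_{1}$ is the Stokes lift of $\bg$, hence in $\mathbf{V}^{2}(\Omega)$, so $(\bu_{1}.\nabla)\bz$ and $(\bz.\nabla)\bu_{1}$ are in $\mathbf{L}^{2}(\Omega)$ (using $\bu_{1}\in\mathbf{L}^{\infty}$, $\nabla\bu_{1}\in\mathbf{L}^{6}$, $\bz\in\mathbf{L}^{6}$, $\nabla\bz\in\mathbf{L}^{2}$), and $\bu_{2}$ solves the Oseen problem with homogeneous data and $\mathbf{L}^{2}$ right-hand side, to which the cited lemma applies; the temperature is treated analogously with the drift $\bz.\nabla\phi_{i}$ kept in the operator. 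Your temperature step hides the same issue: under $\bz\in\V^{1}(\Omega)$ only, $\bz.\nabla\phi$ is merely $L^{3/2}$, so ``standard Neumann $H^{2}$ regularity'' cannot be applied with that term absorbed into the data. The fix is simply to keep the linearized transport terms on the left and quote the Oseen/advection--diffusion regularity as the paper does; then no additional regularity of $(\bz,\theta)$ is needed.
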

\begin{proof}
 Let $(\widetilde{\mathbf{w}}, \widetilde{p}, \phi)$ satisfy equation \eqref{newboussi}. As in the previous existence theorem, we can define a continuous, coercive bilinear form $a((\widetilde{\mathbf{w}},\phi),(\mathbf{\Phi},\psi))$ and continuous linear form $L((\mathbf{\Phi},\psi))$ on $\mathbf{V}_{0}^{1}(\Omega)\times H^{1}(\Omega)$.
%\begin{align*}
%a((\widetilde{\mathbf{w}},\phi),(\mathbf{\Phi},\psi))=&\lambda_{0}\int_{\Omega}\widetilde{\mathbf{w}}.\mathbf{\Phi}+ \int_{\Omega}\nabla \widetilde{\mathbf{w}}.\nabla \mathbf{\Phi}+\int_{\Omega}((\widetilde{\mathbf{w}}.\nabla)\mathbf{z}+(\mathbf{z}.\nabla)\widetilde{\mathbf{w}}).\mathbf{\Phi}-\int_{\Omega} \mathbf{\beta}\tau.\mathbf{\Phi}
%\\&+\lambda_{0}\int_{\Omega}\phi \psi +  \int_{\Omega}\nabla \phi.\nabla \psi + \int_{\Omega}(\mathbf{z}.\nabla \phi)\psi + \int_{\Omega} (\widetilde{\mathbf{w}}.\nabla \theta)\psi
%\end{align*}
%It is continuous and coercive. Now define the linear form by : 
%\begin{align*}
%L((\mathbf{\Phi},\psi))=\int_{\Omega} \mathbf{f}_{1}.\mathbf{\Phi}+\int_{\Omega}\widetilde{ f_{2}}\psi+\int_{\Gamma}h\psi .
%\end{align*}
%  It is continuous on $\mathbf{V}_{0}^{1}(\Omega)\times H^{1}(\Omega)$. 
By Lax-Milgram, we can conclude that $(\widetilde{\mathbf{w}},\phi)\in \mathbf{V}_{0}^{1}(\Omega)\times H^{1}(\Omega)$ is a weak solution of \eqref{newboussi}.  Hence by applying de Rham's theorem, we have $(\widetilde{\mathbf{w}}, \widetilde{p},\phi)\in\mathbf{V}^{1}_{0}(\Omega)\times L^{2}(\Omega)\times H^{1}(\Omega)$. 
  
  Now we know that $(\bu, p)= (\widetilde{\mathbf{w}}+ \mathbf{w}, \widetilde{p}+ \pi )$, where $(\mathbf{w}, \pi)$
  is the solution of \eqref{wpi} that belongs to $\V^{2}(\Omega) \times H^{1}(\Omega)/ \R$. So we can conclude that $(\mathbf{u},p,\phi)\in \mathbf{V}^{1}(\Omega)\times L^{2}(\Omega) \times H^{1}(\Omega)$.\\ 
  
  Now for proving regularity, we consider $(\mathbf{u},p)=(\mathbf{u}_{1}+\mathbf{u}_{2},p_{1}+p_{2})$ such that :
\begin{equation}\label{split1}
\left. \begin{aligned}
&\lambda_{0}\mathbf{u}_{1}-\Delta \mathbf{u}_{1}+\nabla p_{1}=0 \mbox{ in }\Omega,\, \mbox{ div}\,\mathbf{u}_{1}=0,\mbox{ in }\Omega,\,\quad \mathbf{u}_{1}=\mathbf{g} \mbox{ on }\Gamma \\& \lambda_{0}\mathbf{u}_{2} - \Delta \mathbf{u}_{2} + (\mathbf{u}_{2}.\nabla)\mathbf{z} + (\mathbf{z}.\nabla)\mathbf{u}_{2} + \nabla p_{2}=-(\mathbf{u}_{1}.\nabla)\mathbf{z}-(\mathbf{z}.\nabla)\mathbf{u}_{1}+ \mathbf {\beta}\phi+\mathbf{f}_{1} \mbox{ in } \Omega,  \\ & \mbox{div }\mathbf{u}_{2}=0 \mbox{ in }\Omega;\quad \mathbf{u}_{2}=0 \mbox{ on } \Gamma .
 \end{aligned}
\right\}
\end{equation}
 And $\phi=(\phi_{1}+\phi_{2})$ satisfies :
\begin{equation}\label{split2}
\left. \begin{aligned}
 &\lambda_{0}\phi_{1}-\Delta \phi_{1}+(\mathbf{z}.\nabla)\phi_{1}=0 \mbox{ in }\Omega,\,\quad \frac{\partial \phi_{1}}{\partial n}=h \mbox{ on }\Gamma,\\&  \lambda_{0}\phi_{2}- \Delta\phi_{2} + \mathbf{z}.\nabla\phi_{2} +\mathbf{u}_{2}.\nabla\theta =-\mathbf{u}_{1}.\nabla \theta +f_{2} \mbox{ in }\Omega, \\& \frac{\partial\phi_{2}}{\partial n} = 0 \mbox{ on } \Gamma. 
 \end{aligned}
\right\}
\end{equation}
Now the first equation of \eqref{split1} is the Stokes Problem and we know that $(\mathbf{u}_{1},p_{1})\in \mathbf{V}^{2}(\Omega)\times H^{1}(\Omega)$ if $\bg \in \V^{3/2}(\Gamma)$. Now the R.H.S of second equation in \eqref{split1} is in $\mathbf{L}^{2}(\Omega)$. Thus by \cite[Appendix B, lemma B.1]{RAMO}, we can conclude $(\mathbf{u}_{2},p_{2})\in \mathbf{V}^{2}(\Omega)\times H^{1}(\Omega)$. Hence, we can conclude that $(\mathbf{u},p)\in \mathbf{V}^{2}(\Omega)\times H^{1}(\Omega)$. 

Also from first equation of \eqref{split2}, we have $\phi_{1}\in H^{2}(\Omega)$ when $h \in H^{1/2}(\Gamma)$. Then consider the equation :
\begin{align*}
&  \lambda_{0}\phi_{2}- \Delta\phi_{2} + \mathbf{z}.\nabla\phi_{2} =-\mathbf{u}_{2}.\nabla\theta-\mathbf{u}_{1}.\nabla \theta +f_{2} \mbox{ in }\Omega \\& \frac{\partial\phi_{2}}{\partial n} = 0 \mbox{ on } \Gamma 
\end{align*}
 Now the R.H.S of the above equation is in $L^{2}(\Omega)$. Thus we have $\phi_{2}\in H^{2}(\Omega)$ .
\end{proof}
Now we consider the boundary data $(\mathbf{g},h)\in \mathbf{V}^{-1/2}(\Gamma)\times H^{-3/2}(\Gamma)$. In this case we can use transposition method to define a solution to equation \eqref{eq:steadyboussinesq1}. For simplicity let us assume that  $(\mathbf{f}_{1},f_{2})=(0,0)$.
\begin{definition}\label{steady transposition}
 Assume that $(\mathbf{g},h)\in \mathbf{V}^{-1/2}(\Gamma)\times H^{-3/2}(\Gamma).$ A function $(\mathbf{u},p,\phi)\in \mathbf{V}^{0}(\Omega)\times \mathcal{H}^{-1}(\Omega) \times L^{2}(\Omega)$ is a \textbf{transposition solution} to system \eqref{eq:steadyboussinesq1} if
 \begin{align}\label{trans1}
 \langle (\mathbf{u},\phi),(\mathbf{f}_{3},f_{4}) \rangle_{\mathbf{V}^{0}(\Omega)\times L^{2}(\Omega)}=\langle -\frac{\partial \mathbf{r}}{\partial \mathbf{n}}+\widetilde{\pi}\mathbf{n}-c(\widetilde{\pi})\mathbf{n}, \mathbf{g} \rangle_{\mathbf{V}^{1/2}(\Gamma),\mathbf{V}^{-1/2}(\Gamma)} + \langle  s, h \rangle_{H^{3/2}(\Gamma),H^{-3/2}(\Gamma)}
 \end{align}
 for every $(\mathbf{f}_{3},f_{4})\in \mathbf{V}^{0}(\Omega)\times L^{2}(\Omega)$, where $c(\widetilde{\pi})=\frac{1}{|\Gamma|}\int_{\Gamma}\widetilde{\pi}$ and $(\mathbf{r},s)$ is the solution to :
 \begin{equation}\label{rsstationary}
\left. \begin{aligned}
& \lambda_{0}\mathbf{r} - \Delta \mathbf{r} +(\nabla \mathbf{z})^{T}\mathbf{r} - (\mathbf{z}.\nabla)\mathbf{r} + \nabla \tilde{\pi}+(\nabla\theta)^{T} s= \mathbf{f}_{3} \mbox{ in } \Omega ,  \\ & \mbox{div }\mathbf{r}=0 \mbox{ in }\Omega;\quad \mathbf{r}=0 \mbox{ on } \Gamma , \\& \lambda_{0}s- \Delta s - \mathbf{z}.\nabla s -\bm{\beta.r} =f_{4} \mbox{ in }\Omega , \\& \frac{\partial s}{\partial n} = 0 \mbox{ on } \Gamma ,
\end{aligned}
\right\}
\end{equation}
and 
\begin{align*}
\langle p,k \rangle_{\mathcal{H}^{-1}(\Omega), \mathcal{H}^{1}(\Omega)}=\langle \frac{\partial \mathbf{r}_{1}}{\partial \mathbf{n}}- \pi_{1}\mathbf{n} + c(\pi_{1})\mathbf{n}, \mathbf{g} \rangle_{\mathbf{V}^{1/2}(\Gamma),\mathbf{V}^{-1/2}(\Gamma)}-\langle  s_{1},h \rangle_{H^{3/2}(\Gamma),H^{-3/2}(\Gamma)}
\end{align*}
for every $k\in \mathcal{H}^{1}(\Omega) $ with $\int_{\Omega}k=0$ and $(\mathbf{r}_{1},s_{1})$ is the solution to 
\begin{equation}\label{r1s1stationary}
\left. \begin{aligned}
& \lambda_{0}\mathbf{r}_{1} - \Delta \mathbf{r}_{1} +(\nabla \mathbf{z})^{T}\mathbf{r}_{1} - (\mathbf{z}.\nabla)\mathbf{r}_{1} + \nabla \pi_{1}+(\nabla\theta)^{T} s_{1}= 0 \mbox{ in } \Omega  \\ & \mbox{div }\mathbf{r}_{1}=k \mbox{ in }\Omega;\quad \mathbf{r}_{1}=0 \mbox{ on } \Gamma  \\& \lambda_{0}s_{1}- \Delta s_{1} - \mathbf{z}.\nabla s_{1} -\bm{\beta}.\mathbf{r}_{1} =0 \mbox{ in }\Omega  \\& \frac{\partial s_{1}}{\partial n} = 0 \mbox{ on } \Gamma 
\end{aligned}
\right\}
\end{equation}
 \end{definition}
\begin{theorem}\label{transposition1}
 For all $(\mathbf{g},h)\in \mathbf{V}^{-1/2}(\Gamma)\times H^{-3/2}(\Gamma)$, system \eqref{eq:steadyboussinesq1} admits a unique transposition solution $(\mathbf{u},p,\phi)\in \mathbf{V}^{0}(\Omega)\times \mathcal{H}^{-1}(\Omega) \times L^{2}(\Omega)$ in the sense of above definition.
 \end{theorem}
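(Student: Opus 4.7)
I would use the transposition method, built around solving the two adjoint systems \eqref{rsstationary} and \eqref{r1s1stationary} with enough regularity that their boundary traces can be paired against the low-regularity data $\bg \in \V^{-1/2}(\Gamma)$ and $h \in H^{-3/2}(\Gamma)$. For every $(\mathbf{f}_3, f_4) \in \V^0(\Omega) \times L^2(\Omega)$, system \eqref{rsstationary} is a linear coupled elliptic system of essentially the same structure as \eqref{eq:steadyboussinesq}: the convective terms are replaced by their formal $L^2$-adjoints ($(\nabla \bz)^T \mathbf{r}$ in place of $(\bu\cdot\nabla)\bz$, and $-(\bz\cdot\nabla)\mathbf{r}$ in place of $(\bz\cdot\nabla)\bu$) and the coupling is reversed. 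The transport term $\int ((\bz\cdot\nabla)\mathbf{r})\cdot\mathbf{r}$ still vanishes by integration by parts, while $\int ((\nabla\bz)^T\mathbf{r})\cdot\mathbf{r}$ is controlled by $\|\nabla\bz\|_{L^2}\|\mathbf{r}\|_{L^4}^2$, so the H\"older--interpolation--Young argument used in Theorem \ref{existenceboussi} shows coercivity of the adjoint bilinear form provided $\lambda_0$ is taken sufficiently large. An $H^2$-lift via \cite[Appendix B, Lemma B.1]{RAMO} as in Theorem \ref{steadyRegularity} then yields a unique $(\mathbf{r}, \wt\pi, s) \in \V^2(\Omega) \times \mc H^1(\Omega) \times H^2(\Omega)$ with continuous dependence on $(\mathbf{f}_3, f_4)$, so the traces $\tfrac{\partial \mathbf{r}}{\partial \mathbf{n}} \in \V^{1/2}(\Gamma)$, $\wt\pi|_\Gamma \in H^{1/2}(\Gamma)$, $s|_\Gamma \in H^{3/2}(\Gamma)$ are well defined and the right-hand side of \eqref{trans1} is a continuous linear functional on $\V^0(\Omega)\times L^2(\Omega)$. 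Riesz representation in the pivot Hilbert space $\V^0(\Omega)\times L^2(\Omega)$ then produces a unique $(\bu, \phi)$ satisfying \eqref{trans1}.

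For the pressure component I would repeat the argument with \eqref{r1s1stationary}. The only new difficulty is the inhomogeneous divergence condition $\mathrm{div}\, \mathbf{r}_1 = k$ with $k \in \mc H^1(\Omega)$ and $\int_\Omega k = 0$; I would absorb it by a Bogovskii-type lift $\mathbf{r}_1^{\mathrm{L}} \in \mathbf{H}_0^1(\Omega) \cap \mathbf{H}^2(\Omega)$ with $\mathrm{div}\,\mathbf{r}_1^{\mathrm{L}} = k$, reducing to a divergence-free system of the type already treated and obtaining $(\mathbf{r}_1, \pi_1, s_1) \in \V^2(\Omega) \times (H^1(\Omega)/\R) \times H^2(\Omega)$ with continuous dependence on $k$. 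The mean-zero normalization $\pi_1 \mathbf{n} - c(\pi_1)\mathbf{n}$ ensures that the pairing against $\bg$ is intrinsic on the quotient, and a second application of Riesz representation then yields $p \in \mc H^{-1}(\Omega)$.

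Uniqueness is automatic from the construction: if $\bg=0$ and $h=0$, both defining identities force $\langle (\bu,\phi),(\mathbf{f}_3,f_4)\rangle=0$ and $\langle p, k\rangle=0$ for every admissible test data, so $(\bu, \phi) = 0$ and $p = 0$. I expect the main obstacle to be the coercivity and $H^2$-regularity analysis for the adjoint system \eqref{rsstationary}: the transposed convection $(\nabla \bz)^T \mathbf{r}$ is not antisymmetric in the way that $(\bz\cdot\nabla)\bu$ was, and has to be absorbed via the interpolation estimate between $\V_n^0(\Omega)$ and $\V_0^1(\Omega)$ exactly as in the proof of Theorem \ref{existenceboussi}. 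Once this adjoint well-posedness is established, the remainder of the argument is a routine double application of the Riesz representation theorem.
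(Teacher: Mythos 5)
Your proposal is correct and follows essentially the same route as the paper: well-posedness and $H^{2}\times\mathcal{H}^{1}\times H^{2}$ regularity for the adjoint systems \eqref{rsstationary} and \eqref{r1s1stationary}, boundedness of the resulting trace map, and then duality — your Riesz-representation step is exactly the paper's construction $(\bu,\phi)=\Lambda^{*}(\bg,h)$, $p=\Lambda_{1}^{*}(\bg,h)$, with the same uniqueness argument. If anything, you supply details the paper leaves implicit (coercivity of the adjoint bilinear form for large $\lambda_{0}$ and a Bogovskii-type lift for the constraint $\mathrm{div}\,\mathbf{r}_{1}=k$), so no gap.
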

 \begin{proof}
 As $(\mathbf{f}_{3},f_{4})\in \mathbf{V}^{0}(\Omega)\times L^{2}(\Omega)$, by theorem \ref{steadyRegularity} we have $(\mathbf{r}, \tilde{\pi}, s)\in \mathbf{V}^{2}(\Omega)\times \mathcal{H}^{1}(\Omega) \times H^{2}(\Omega)$. Let us define the operator $\Lambda$ from $\mathbf{V}^{0}(\Omega)\times L^{2}(\Omega)$ into $\mathbf{V}^{1/2}(\Gamma)\times H^{3/2}(\Gamma)$ by : 
\begin{align*} 
 \Lambda(\mathbf{f}_{3},f_{4})=\Big(-\frac{\partial \mathbf{r}}{\partial \mathbf{n}}+\widetilde{\pi}\mathbf{n}-c(\widetilde{\pi})\mathbf{n},s\Big),
 \end{align*}
 where $(\mathbf{r},s)$ is solution to \eqref{rsstationary}. Now we have :
 \begin{align*}
 || \Lambda(\mathbf{f}_{3},f_{4})||_{\mathbf{V}^{1/2}(\Gamma)\times H^{3/2}(\Gamma)} &= || \Big(-\frac{\partial \mathbf{r}}{\partial \mathbf{n}}+\widetilde{\pi}\mathbf{n}-c(\widetilde{\pi})\mathbf{n},s\Big)||_{\mathbf{V}^{1/2}(\Gamma)\times H^{3/2}(\Gamma)} \\& \leq C( || \mathbf{r}||_{\V^{2}(\Omega)} + || \tilde{\pi} ||_{H^{1}(\Omega)} + ||s||_{H^{2}(\Omega)}) \\ & \leq C(||\mathbf{f}_{3}||_{\V^{0}(\Omega)} + ||f_{4}||_{L^{2}(\Omega)}) \\ & = C || (\mathbf{f}_{3}, f_{4})||_{\V^{0}(\Omega)\times L^{2}(\Omega)}.
 \end{align*}
 Then it follows that $\Lambda$ is a bounded, linear operator. Then equation \eqref{trans1} can be rewritten as : 
 \begin{align*}
 \langle (\mathbf{u},\phi),(\mathbf{f}_{3},f_{4}) \rangle_{\mathbf{V}^{0}(\Omega)\times L^{2}(\Omega)}&=\langle \Lambda(\mathbf{f}_{3},f_{4}),(\mathbf{g},h)\rangle_{\mathbf{V}^{1/2}(\Gamma)\times H^{3/2}(\Gamma),\mathbf{V}^{-1/2}(\Gamma)\times H^{-3/2}(\Gamma)}\\&=\langle (\mathbf{f}_{3},f_{4}),\Lambda^{*}(\mathbf{g},h) \rangle_{\mathbf{V}^{0}(\Omega)\times L^{2}(\Omega)}
 \end{align*}
where $\Lambda^{*}$ is the adjoint of $\Lambda$ and $\Lambda^{*}\in \mathcal{L}(\mathbf{V}^{-1/2}(\Gamma)\times H^{-3/2}(\Gamma),\mathbf{V}^{0}(\Omega)\times L^{2}(\Omega))$. The function $(\mathbf{u},\phi)=\Lambda^{*}(\mathbf{g},h)$ is clearly a solution to system \eqref{eq:steadyboussinesq1} in the sense of  definition \ref{steady transposition}.\\

Now we want to prove the existence of pressure $p$. For $k \in \mathcal{H}^{1}(\Omega)$ with $\int_{\Omega} k = 0$, the solution $(\mathbf{r}_{1},s_{1})$ to equation \eqref{r1s1stationary} belongs to $\V^{2}(\Omega) \times H^{2}(\Omega)$. The operator $\Lambda_{1}$ from $\mathcal{H}^{1}(\Omega)$ to $\V^{\frac{1}{2}}(\Gamma))\times H^{\frac{3}{2}}(\Gamma)$ 
can be defined as :
\begin{align*}
\Lambda_{1}(k)= \Big( -\frac{\partial \mathbf{r}_{1}}{\partial \mathbf{n}}+\pi_{1}\mathbf{n}-c(\pi_{1})\mathbf{n},\, s_{1} \Big),
\end{align*}
where $(\mathbf{r}_{1},s_{1})$ is the solution to equation \eqref{r1s1stationary}. It can be proved similarly that the function 
\begin{align*}
p= \Lambda_{1}^{*}\begin{pmatrix}
\mathbf{g} \\ h 
\end{pmatrix} \in \mathcal{H}^{-1}(\Omega)
\end{align*}
is a solution to equation \eqref{eq:steadyboussinesq1} in the sense of definition \ref{steady transposition}.\\

To prove uniqueness, let $(\bu,\phi)$ be a solution corresponding to $(\bg,h)=(0,0).$ Thus from equation \eqref{trans1}, for all $(\mathbf{f}_{3}, f_{4}) \in \V^{0}(\Omega) \times L^{2}(\Omega)$,  we have :
\begin{align*}
 \langle (\mathbf{u},\phi),(\mathbf{f}_{3},f_{4}) \rangle_{\mathbf{V}^{0}(\Omega)\times L^{2}(\Omega)}=0 .
\end{align*}
Hence, we will get $(\bu,\phi)=(0,0)$.
\end{proof}

Our next aim is to define lifting operator $L_{\bz}$ for the coupled system \eqref{eq:steadyboussinesq1}. We denote by $L_{\bz}(\mathbf{g},h)=(\mathbf{w}, p, \xi)$ and it satisfies the equation : 
 \begin{align*}
 & \lambda_{0}\mathbf{w} - \nu\Delta \mathbf{w} + (\mathbf{w}.\nabla)\mathbf{z} + (\mathbf{z}.\nabla)\mathbf{w} + \nabla p= \bm {\beta}\xi \mbox{ in } \Omega, \notag \\ & \mbox{div }\mathbf{w}=0 \mbox{ in }\Omega;\quad \mathbf{w}=\mathbf{g} \mbox{ on } \Gamma ,\notag \\& \lambda_{0}\xi- \mu\Delta\xi + \mathbf{z}.\nabla\xi +\mathbf{w}.\nabla\theta =0 \mbox{ in }\Omega ,\notag \\& \frac{\partial\xi}{\partial n} = h \mbox{ on } \Gamma .
 \end{align*}
 \begin{theorem}\label{liftboussinesq}
 The operator $L_{\bz}$ is linear and continuous from $\mathbf{V}^{s+1}(\Gamma)\times H^{s}(\Gamma)$ into $\mathbf{V}^{s+3/2}(\Omega)\times \mathcal{H}^{s+1/2}(\Omega)/\R \times H^{s+3/2}(\Omega)$ for all  $-3/2 \leq s \leq 1/2$.
 \end{theorem}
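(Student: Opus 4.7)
The plan is to prove linearity directly, establish continuity at the two extreme endpoints $s=1/2$ and $s=-3/2$ using the results already available in the paper, and then obtain the full range by interpolation.

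Linearity of $L_{\bz}$ is immediate because the defining system is linear in $(\bg,h)$ with zero source terms: if $(\mathbf{w}_i, p_i, \xi_i)$ solves the system with data $(\bg_i, h_i)$, then the pair $(\alpha\mathbf{w}_1+\beta\mathbf{w}_2,\alpha p_1+\beta p_2,\alpha\xi_1+\beta\xi_2)$ solves it with data $(\alpha\bg_1+\beta\bg_2,\alpha h_1+\beta h_2)$, and uniqueness guarantees that this is exactly $L_{\bz}(\alpha\bg_1+\beta\bg_2,\alpha h_1+\beta h_2)$.

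For the regular endpoint $s=1/2$, I would invoke \Cref{boussi2} with $\mathbf{f}_1=0$, $f_2=0$, which for $(\bg,h)\in\V^{3/2}(\Gamma)\times H^{1/2}(\Gamma)$ yields $(\mathbf{w},p,\xi)\in\V^{2}(\Omega)\times\mathcal{H}^{1}(\Omega)/\R\times H^{2}(\Omega)$ with a bound linear in the boundary data. For the singular endpoint $s=-3/2$, I would apply \Cref{transposition1} with $(\mathbf{f}_1,f_2)=(0,0)$, obtaining a unique transposition solution $(\mathbf{w},p,\xi)\in\V^{0}(\Omega)\times\mathcal{H}^{-1}(\Omega)\times L^{2}(\Omega)$ with the corresponding continuity estimate coming from the boundedness of $\Lambda^{*}$ and $\Lambda_{1}^{*}$ constructed in that proof. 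A short consistency check is needed to ensure that the transposition solution agrees with the weak solution on the overlap of the two regularity regimes, so that $L_{\bz}$ is a single well-defined linear map on the whole scale.

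With the two endpoint estimates in hand, the intermediate range is obtained by complex interpolation. Writing $\theta=(s+3/2)/2\in[0,1]$, the standard Sobolev interpolation identities give
\begin{align*}
[\V^{-1/2}(\Gamma),\V^{3/2}(\Gamma)]_{\theta}&=\V^{s+1}(\Gamma), & [H^{-3/2}(\Gamma),H^{1/2}(\Gamma)]_{\theta}&=H^{s}(\Gamma),\\
[\V^{0}(\Omega),\V^{2}(\Omega)]_{\theta}&=\V^{s+3/2}(\Omega), & [L^{2}(\Omega),H^{2}(\Omega)]_{\theta}&=H^{s+3/2}(\Omega),\\
[\mathcal{H}^{-1}(\Omega)/\R,\mathcal{H}^{1}(\Omega)/\R]_{\theta}&=\mathcal{H}^{s+1/2}(\Omega)/\R,
\end{align*}
so the interpolation theorem for bounded linear operators delivers the claimed continuity of $L_{\bz}$ for all $s\in[-3/2,1/2]$.

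The main obstacle is twofold. First, one must carefully verify that the transposition solution from \Cref{transposition1} coincides with the weak solution from \Cref{boussi2} whenever both are defined; this is needed so that the interpolation applies to a single operator rather than to two a priori distinct maps. This consistency follows by testing the weak formulation against the adjoint problems \eqref{rsstationary}--\eqref{r1s1stationary} and integrating by parts, but it must be stated explicitly. Second, the pressure space at the transposition end is only $\mathcal{H}^{-1}(\Omega)$ (without mod $\R$); the quotient by constants that appears in the target space must be tracked through the $\int_{\Omega}k=0$ condition in \Cref{steady transposition}, so that the pressure component of $L_{\bz}$ indeed takes values in $\mathcal{H}^{s+1/2}(\Omega)/\R$ at each endpoint. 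Once these two points are handled, the rest of the proof reduces to applying the interpolation identities above.
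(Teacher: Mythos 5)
Your proposal follows essentially the same route as the paper: continuity at the regular endpoint from \cref{boussi2}, at the singular endpoint from \cref{transposition1}, and the intermediate range by the interpolation theorem of Lions--Magenes. The extra points you raise (explicit linearity, agreement of the transposition and weak solutions on the overlap, and tracking the quotient by constants in the pressure space) are legitimate refinements that the paper's three-line proof leaves implicit, but they do not change the argument.
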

 \begin{proof}
 Due to theorem \ref{boussi2}, $L_{\bz}$ is continuous from $\mathbf{V}^{3/2}(\Gamma)\times H^{1/2}(\Gamma)$ to $\mathbf{V}^{2}(\Omega)\times H^{2}(\Omega).$ From theorem \ref{transposition1}, $L_{\bz}$ is also a bounded operator from $\mathbf{V}^{-1/2}(\Gamma)\times H^{-3/2}(\Gamma)$ into $\mathbf{V}^{0}(\Omega)\times (H^{1}(\Omega)/\R)' \times L^{2}(\Omega)$. Then our required result follows from interpolation theorem \cite[Chapter 1, theorem 9.6]{LM}. 
 \end{proof}
 \section{Unsteady linearized Boussinesq system}
 \subsection{Linearization around stationary state}

  In this section we study first the  linearized Boussinesq system around zero solution and later around nonzero solution with non homogeneous boundary conditions.

 \subsubsection{Linearization around zero solution}
 Here we are mainly interested in the following equation :
 \begin{equation}\label{aroundzero}
\left. \begin{aligned}
&\frac{\partial \mathbf{u}}{\partial t} -\Delta \mathbf{u} + \nabla p= \bm{\beta}\phi \mbox{ in } Q, \\ & \mbox{div }\mathbf{u}=0 \mbox{ in } Q;\quad \mathbf{u}=\mathbf{g} \mbox{ on } \Sigma,\\& \mathbf{u}(x,0)=\mathbf{u}_{0}(x) \mbox{ in }\Omega, \\&\frac{\partial \phi}{\partial t} - \Delta\phi  = 0 \mbox{ in } Q , \\
& \frac{\partial\phi}{\partial n} = h \mbox{ on } \Sigma,\quad \phi(0)=\phi_{0}  \mbox{ in } \Omega.
 \end{aligned}
\right\}
\end{equation}

If the boundary data is regular then there are some known existence and regularity results for system \eqref{prothomeq}. For instance, in \cite{Morimoto}, the author proved that if $\bg \in C^{1}(\Sigma)$ and $h \in L^{2}(\Sigma)$, then $(\bz, p, \theta)\in L^{\infty}(0,T; \V^{0}(\Omega)) \times L^{2}(0,T; L^{2}(\Omega))\times L^{\infty}( 0,T; L^{2}(\Omega))$. So, our interest lies in the case when the boundary data are not in such regular spaces.

We want to define a solution to equation \eqref{aroundzero} when the boundary data  $(\mathbf{g},h)$ belongs to $ L^{2}(0,T;\mathbf{V}^{-1/2}(\Gamma))\times L^{2}(0,T;H^{-3/2}(\Gamma))$. In this case, we are going to define the solution via transposition method.
\begin{definition}\label{sa}
Assume that $(\mathbf{g},h)\in L^{2}(0,T;\mathbf{V}^{-1/2}(\Gamma))\times L^{2}(0,T;H^{-3/2}(\Gamma))$ and $(\mathbf{u}_{0},\phi_{0})\in \mathbf{V}^{-1}(\Omega) \times (H^{1}(\Omega))^{'}$. A function $(\mathbf{u},p,\phi)\in L^{2}(0,T;\mathbf{V}^{0}(\Omega))\times L^{2}(0,T;\mathcal{H}^{-1}(\Omega)) \times \\ L^{2}(0,T;L^{2}(\Omega))$ is a solution to equation \eqref{aroundzero} in the sense of \textbf{transposition} iff 
\begin{align}\label{transposition}
\int_{Q} \mathbf{u.f}_{3}+ \int_{Q} \phi f_{4}=&\int_{0}^{T}\langle -\frac{\partial \mathbf{r}}{\partial \mathbf{n}}(t)+\widetilde{\pi}(t)\mathbf{n}-c(\widetilde{\pi})\mathbf{n}, \mathbf{g}(t) \rangle_{\mathbf{V}^{1/2}(\Gamma),\mathbf{V}^{-1/2}(\Gamma)} +\notag\\ & \langle  s(t), h(t) \rangle_{H^{3/2}(\Gamma),H^{-3/2}(\Gamma)} +\langle \mathbf{u}_{0},\mathbf{r}(0)\rangle_{\mathbf{H}^{-1}(\Omega),\mathbf{H}_{0}^{1}(\Omega)}+\notag\\& \langle \phi_{0},s(0) \rangle_{(H^{1}(\Omega))',H^{1}(\Omega)}, 
\end{align}
where $c(\tilde{\pi})=\frac{1}{|\Gamma|}\int_{\Gamma}\widetilde{\pi}$, $(\mathbf{f}_{3},f_{4})\in L^{2}(0,T;\mathbf{V}^{0}(\Omega))\times L^{2}(0,T;L^{2}(\Omega)),$ and $(\mathbf{r},s)$ is the solution to the equation
 \begin{equation}\label{rs}
\left. \begin{aligned}
& -\frac{\partial \mathbf{r}}{\partial t} - \Delta \mathbf{r} + \nabla \tilde{\pi}= \mathbf{f}_{3} \mbox{ in } Q  \\ & \mbox{div }\mathbf{r}=0 \mbox{ in }Q;\quad \mathbf{r}=0 \mbox{ on } \Sigma  \\ &\mathbf{r}(T)=0 \mbox{ in }\Omega\\& -\frac{\partial s}{\partial t}- \Delta s-\mathbf{\beta.r} =f_{4} \mbox{ in }Q  \\& \frac{\partial s}{\partial n} = 0 \mbox{ on } \Sigma;\quad s(T)=0 \mbox{ in }\Omega
 \end{aligned}
\right\}
\end{equation}
and 
\begin{align}\label{pressure}
&\int_{0}^{T}\langle k,p \rangle_{H^{1}(\Omega), \mathcal{H}^{1}(\Omega)}=\int_{0}^{T}\langle \frac{\partial \mathbf{r}_{1}}{\partial \mathbf{n}}(t)- \pi_{1}(t)\mathbf{n} + c(\pi_{1})\mathbf{n}, \mathbf{g}(t) \rangle_{\mathbf{V}^{1/2}(\Gamma),\mathbf{V}^{-1/2}(\Gamma)}\notag\\ &- \int_{0}^{T}\langle  s_{1}(t),h(t) \rangle_{H^{3/2}(\Gamma),H^{-3/2}(\Gamma)}-\langle \mathbf{u}_{0}, \mathbf{r}_{1}(0)\rangle_{\mathbf{H}^{-1}(\Omega),\mathbf{H}_{0}^{1}(\Omega)}-\langle \phi_{0},s(0) \rangle_{(H^{1}(\Omega))',H^{1}(\Omega)},
\end{align}
for every $k\in L^{2}(0,T; H^{1}(\Omega)) \cap H^{\frac{3}{4}}(0,T; H^{-\frac{1}{2}}(\Omega))$ with $\int_{Q}k=0$ and $(\mathbf{r}_{1},s_{1})$ is the solution to 
  \begin{equation}\label{r1s1}
\left\{ \begin{aligned}
& -\frac{\partial \mathbf{r}_{1}}{\partial t} - \Delta \mathbf{r}_{1} + \nabla \pi_{1}= 0 \mbox{ in } Q  \\ & \mbox{div }\mathbf{r}_{1}=k \mbox{ in }Q;\quad \mathbf{r}_{1}=0 \mbox{ on } \Sigma,\quad \mathbf{r}_{1}(T)=0\mbox{  in }\Omega  \\& -\frac{\partial s_{1}}{\partial t}- \Delta s_{1} -\mathbf{\beta}.\mathbf{r}_{1} =0 \mbox{ in }Q  \\& \frac{\partial s_{1}}{\partial n} = 0 \mbox{ on } \Sigma,\quad s_{1}(T)=0\mbox{  in  }\Omega  
 \end{aligned}
\right.
\end{equation}
\end{definition}
 
 \begin{theorem}
For all $(\mathbf{g},h)\in L^{2}(0,T;\mathbf{V}^{-1/2}(\Gamma))\times L^{2}(0,T;H^{-3/2}(\Gamma))$ and\\ $(\mathbf{u}_{0},\phi_{0})\in \mathbf{V}^{-1}(\Omega) \times (H^{1}(\Omega))'$, equation \eqref{aroundzero} admits a unique transposition solution \\ $(\mathbf{u},p,\phi)\in L^{2}(0,T;\mathbf{V}^{0}(\Omega))\times L^{2}(0,T;\mathcal{H}^{-1}(\Omega))\times L^{2}(0,T;L^{2}(\Omega))$ in the sense of\\ above definition.
\end{theorem}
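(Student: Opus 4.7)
The plan is to mimic the transposition argument from Theorem \ref{transposition1}, now in the time-dependent parabolic setting. First I would establish optimal regularity for the backward adjoint system \eqref{rs}: for any $(\mathbf{f}_3,f_4)\in L^2(0,T;\V^0(\Omega))\times L^2(0,T;L^2(\Omega))$, standard parabolic theory (Stokes part decoupled, then heat equation with source $\bm{\beta}\cdot\mathbf{r}\in L^2(Q)$) gives
\[
\mathbf{r}\in L^2(0,T;\V^2(\Omega))\cap H^1(0,T;\V^0(\Omega)),\quad \widetilde{\pi}\in L^2(0,T;\mathcal{H}^1(\Omega)),\quad s\in L^2(0,T;H^2(\Omega))\cap H^1(0,T;L^2(\Omega)),
\]
with a continuous dependence estimate. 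In particular the traces $\partial_{\mathbf{n}}\mathbf{r}, \widetilde{\pi}|_\Gamma \in L^2(0,T;\V^{1/2}(\Gamma))$ and $s|_\Gamma \in L^2(0,T;H^{3/2}(\Gamma))$ are controlled, and the initial values $\mathbf{r}(0)\in \mathbf{H}_0^1(\Omega)$, $s(0)\in H^1(\Omega)$ are controlled as well by the trace theorem in time.

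Next I would package these outputs into a bounded linear operator
\[
\Lambda:L^2(0,T;\V^0(\Omega))\times L^2(0,T;L^2(\Omega))\longrightarrow L^2(0,T;\V^{1/2}(\Gamma))\times L^2(0,T;H^{3/2}(\Gamma))\times \mathbf{H}_0^1(\Omega)\times H^1(\Omega),
\]
defined by $\Lambda(\mathbf{f}_3,f_4)=\bigl(-\partial_{\mathbf{n}}\mathbf{r}+\widetilde{\pi}\mathbf{n}-c(\widetilde{\pi})\mathbf{n},\;s|_\Gamma,\;\mathbf{r}(0),\;s(0)\bigr)$. The right-hand side of \eqref{transposition} then reads $\bigl\langle \Lambda(\mathbf{f}_3,f_4),(\mathbf{g},h,\mathbf{u}_0,\phi_0)\bigr\rangle$ in the appropriate duality, and by continuity of $\Lambda$ this defines a bounded linear functional on $L^2(0,T;\V^0(\Omega))\times L^2(0,T;L^2(\Omega))$. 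The Riesz representation theorem (equivalently, taking $\Lambda^*$ applied to the data tuple) produces the unique pair $(\mathbf{u},\phi)=\Lambda^*(\mathbf{g},h,\mathbf{u}_0,\phi_0)$ in $L^2(0,T;\V^0(\Omega))\times L^2(0,T;L^2(\Omega))$ satisfying \eqref{transposition}.

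For the pressure $p$, I would run the analogous argument with the auxiliary backward system \eqref{r1s1}, using the constraint $\int_Q k=0$ so that the divergence datum $k$ is compatible and Bogovskii/Stokes regularity yields $(\mathbf{r}_1,\pi_1,s_1)\in L^2(0,T;\V^2(\Omega))\times L^2(0,T;\mathcal{H}^1(\Omega))\times L^2(0,T;H^2(\Omega))$ with the requisite time regularity when $k\in L^2(0,T;H^1(\Omega))\cap H^{3/4}(0,T;H^{-1/2}(\Omega))$. Defining $\Lambda_1$ analogously on the quotient space $\{k:\int_Q k=0\}$ and taking $p=\Lambda_1^*(\mathbf{g},h,\mathbf{u}_0,\phi_0)$ gives $p\in L^2(0,T;\mathcal{H}^{-1}(\Omega))$ satisfying \eqref{pressure}. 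Uniqueness is immediate: if $(\mathbf{g},h,\mathbf{u}_0,\phi_0)=(0,0,0,0)$, the right-hand sides of \eqref{transposition} and \eqref{pressure} vanish for every admissible test datum, so $(\mathbf{u},\phi)=0$ by density of $L^2(0,T;\V^0(\Omega))\times L^2(0,T;L^2(\Omega))$ in its own dual, and $p=0$ by density of the zero-mean test functions.

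The delicate step I expect is not the construction itself but verifying the trace/initial-value continuity of $\Lambda$ in the correct function spaces: for $\partial_{\mathbf{n}}\mathbf{r}$ one needs the maximal regularity $\mathbf{r}\in L^2(0,T;\V^2)\cap H^1(0,T;\V^0)$, which requires that the coupling term $\bm{\beta}\cdot\mathbf{r}$ in the $s$-equation be treated by a fixed-point/Galerkin bootstrap (first solve the Stokes part backward, then the scalar part, then iterate) and that the pressure $\widetilde{\pi}$ has enough temporal regularity to give an $L^2$-in-time boundary trace. The compatibility check for \eqref{pressure} — ensuring the right-hand side vanishes when $k$ is a constant, so that the functional really lives on the zero-mean subspace — is the other point that requires care, and it is verified by integrating the divergence equation $\operatorname{div}\mathbf{r}_1=k$ against $\mathbf{r}_1=0$ on $\Gamma$.
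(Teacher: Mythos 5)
Your proposal is correct and follows essentially the same route as the paper: regularity $(\mathbf{r},\widetilde{\pi},s)\in \V^{2,1}(Q)\times L^{2}(0,T;\mathcal{H}^{1}(\Omega))\times L^{2}(0,T;H^{2}(\Omega))$ for the backward adjoint system, the bounded operator $\Lambda(\mathbf{f}_{3},f_{4})=\bigl(-\partial_{\mathbf{n}}\mathbf{r}+\widetilde{\pi}\mathbf{n}-c(\widetilde{\pi})\mathbf{n},\,s,\,\mathbf{r}(0),\,s(0)\bigr)$, the solution as $\Lambda^{*}(\mathbf{g},h,\mathbf{u}_{0},\phi_{0})$, the analogous $\Lambda_{1}^{*}$ construction for the pressure, and uniqueness by testing against all $(\mathbf{f}_{3},f_{4})$. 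The only cosmetic difference is your mention of a fixed-point iteration for the $\bm{\beta}\cdot\mathbf{r}$ coupling, which is unnecessary since the adjoint system is triangular (solve the backward Stokes part first, then the backward heat equation).
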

\begin{proof}
Let $(\mathbf{f}_{3},f_{4})\in L^{2}(0,T;\mathbf{V}^{0}(\Omega))\times L^{2}(0,T;L^{2}(\Omega))$, the solution $(\mathbf{r},s)$ to equation \eqref{rs} belongs  to $\V^{2,1}(Q) \times L^{2}(0,T;H^{2}(\Omega))$. Let us define the operator $\Lambda$ from $L^{2}(0,T;\mathbf{V}^{0}(\Omega))\times L^{2}(0,T;L^{2}(\Omega))$ into the space  $ L^{2}(0,T;\V^{\frac{1}{2}}(\Gamma))\times L^{2}(0,T;H^{\frac{3}{2}}(\Gamma)) \times \V^{1}_{0}(\Omega) \times H^{1}(\Omega)$ by : 
\begin{align*}
\Lambda(\mathbf{f}_{3},f_{4})= \Big( -\frac{\partial \mathbf{r}}{\partial \mathbf{n}}+\widetilde{\pi}\mathbf{n}-c(\widetilde{\pi})\mathbf{n},\, s,\, \mathbf{r}(0),\, s(0)\Big),
\end{align*}
where $(\mathbf{r},s)$ is the solution to equation \eqref{rs}. Now, we can rewrite equation \eqref{transposition} in the form :
\begin{align}\label{Lambda}
\Big\langle \begin{pmatrix}
\mathbf{u} \\ \phi
\end{pmatrix},
\begin{pmatrix}
\mathbf{f}_{3} \\ f_{4}
\end{pmatrix} \Big\rangle_{L^{2}(0,T;\V^{0}(\Omega))\times L^{2}(0,T; L^{2}(\Omega))} = \Big \langle \Lambda 
\begin{pmatrix}
\mathbf{f}_{3} \\ f_{4}
\end{pmatrix},
 \begin{pmatrix}
\mathbf{g} \\ h \\ \mathbf{u}_{0} \\ \phi_{0}
\end{pmatrix} \Big \rangle_{1}\,\, ,
\end{align}
where, $\Big\langle .,. \Big\rangle_{1}$ is the duality product between\\ ${L^{2}(0,T;\V^{\frac{1}{2}}(\Gamma))\times L^{2}(0,T;H^{\frac{3}{2}}(\Gamma)) \times \V^{1}_{0}(\Omega) \times H^{1}(\Omega)}$ and its dual. We can check that $\Lambda$ is a linear, bounded operator. Also $\Lambda^{*}$, the adjoint of $\Lambda$ is  bounded and it is from $L^{2}(0,T;\mathbf{V}^{-\frac{1}{2}}(\Gamma))\times L^{2}(0,T;H^{-\frac{3}{2}}(\Gamma))\times \V^{-1}(\Omega) \times (H^{1}(\Omega))'$ into $L^{2}(0,T;\mathbf{V}^{0}(\Omega))\times L^{2}(0,T;L^{2}(\Omega))$.
Thus we have :
 \begin{align}\label{Lambdaadjoint}
 \Big \langle \Lambda 
\begin{pmatrix}
\mathbf{f}_{3} \\ f_{4}
\end{pmatrix},
 \begin{pmatrix}
\mathbf{g} \\ h \\ \mathbf{u}_{0} \\ \phi_{0}
\end{pmatrix} \Big \rangle_{1}= \Big \langle
\begin{pmatrix}
\mathbf{f}_{3} \\ f_{4}
\end{pmatrix}, \Lambda^{*}
 \begin{pmatrix}
\mathbf{g} \\ h \\ \mathbf{u}_{0} \\ \phi_{0}
\end{pmatrix} \Big \rangle_{L^{2}(0,T;\mathbf{V}^{0}(\Omega))\times L^{2}(0,T;L^{2}(\Omega))}.
 \end{align}
 From \eqref{Lambda} and \eqref{Lambdaadjoint}, it is clear that the function 
 \begin{align*}
 \begin{pmatrix}
\mathbf{u} \\ \phi
\end{pmatrix}=\Lambda^{*}\begin{pmatrix}
\mathbf{g} \\ h \\ \mathbf{u}_{0} \\ \phi_{0}
\end{pmatrix}
\end{align*}
is a solution to equation \eqref{aroundzero} in the sense of definition \ref{sa}. 

Existence of pressure $p$ can also be proved in the same way as above. For $k\in L^{2}(0,T;\mathcal{H}^{1}(\Omega)) $ with $\int_{Q}k=0$, the solution $(\mathbf{r}_{1},s_{1})$ to equation \eqref{r1s1} belongs to $\V^{2,1}(Q) \times L^{2}(0,T;H^{2}(\Omega))$. The operator $\Lambda_{1} :$
\begin{align*}
L^{2}(0,T;\mathcal{H}^{1}(\Omega))\mapsto L^{2}(0,T;\V^{\frac{1}{2}}(\Gamma))\times L^{2}(0,T;H^{\frac{3}{2}}(\Gamma)) \times \V^{1}_{0}(\Omega) \times H^{1}(\Omega)
\end{align*}
can be defined as :
\begin{align*}
\Lambda_{1}(k)= \Big( -\frac{\partial \mathbf{r}_{1}}{\partial \mathbf{n}}+\pi_{1}\mathbf{n}-c(\pi_{1})\mathbf{n},\, s_{1},\, \mathbf{r}_{1}(0),\, s_{1}(0)\Big),
\end{align*}
where $(\mathbf{r}_{1},s_{1})$ is the solution to equation \eqref{r1s1}. It can be proved similarly that the function 
\begin{align*}
p= \Lambda_{1}^{*}\begin{pmatrix}
\mathbf{g} \\ h \\ \mathbf{u}_{0} \\ \phi_{0}
\end{pmatrix} \in L^{2}(0,T;\mathcal{H}^{-1}(\Omega))
\end{align*}
is a solution to equation \eqref{aroundzero} in the sense of definition \ref{sa}.\\

To prove uniqueness, let $(\bu,\phi)$ be a solution corresponds to $(\mathbf{g},h,\mathbf{u}_{0},\phi_{0})=(0,0,0,0)$. Then for all $(\mathbf{f}_{3}, f_{4}) \in L^{2}(0,T; \V^{0}(\Omega) \times L^{2}(\Omega))$, by using equation \eqref{transposition} we will get :
\begin{align*}
 \langle (\mathbf{u},\phi),(\mathbf{f}_{3},f_{4}) \rangle_{L^{2}(0,T; \mathbf{V}^{0}(\Omega)\times L^{2}(\Omega))}=0 .
\end{align*}
Hence we have $(\bu, \phi)= (0, 0).$
\end{proof}
Now we want to write equation \eqref{aroundzero} in the operator form when boundary data are regular.
\begin{lemma}
Let $(\mathbf{g},h)\in C^{1}(0,T;\mathbf{V}^{3/2}(\Gamma))\times C^{1}(0,T;H^{1/2}(\Gamma))$. Then $(\mathbf{u},\phi)$, the solution of \eqref{aroundzero} satisfies the following equation:

\begin{align}\label{boussioperator1}
\widetilde{P}\mathbf{u_{\phi}}(t)&=e^{t\mathcal{A}}\widetilde{P}\mathbf{u_{\phi}}(0)+\int_{0}^{t}(-\mathcal{A})e^{(t-s)\mathcal{A}}\widetilde{P}L_{0}(\mathbf{g},h)ds
%\widetilde{P}\mathbf{u_{\phi}}(0)&=\widetilde{P}\mathbf{u_{\phi}}_{0} \notag 
\end{align}
where $\widetilde{P}=
\begin{pmatrix}
P & 0\\ 0 & I
\end{pmatrix},\,\,
\mathbf{u_{\phi}}(t)=
\begin{pmatrix}
\mathbf{u}(t)\\ \phi(t)
\end{pmatrix},\,\, e^{t\mathcal{A}} \mbox{ is the semigroup generated by } \mathcal{A}=
\begin{pmatrix}
P\Delta & A_{2}\\ 0 & \Delta
\end{pmatrix}$ with $A_{2}(\tau)=P(\beta \tau)$  and $(\mathbf{w}, \pi, \psi)=L_{0}(\mathbf{g},h)$ satisfies the following system:
\begin{equation}\label{LDN}
\left. \begin{aligned}
&-\Delta \mathbf{w}(t) + \nabla \pi(t)= \bm {\beta}\psi(t) \mbox{ in } \Omega  \\ & \mbox{div }\mathbf{w}(t)=0 \mbox{ in } \Omega;\quad \mathbf{w}(t)=\mathbf{g}(t) \mbox{ on } \Gamma, \\
 &- \Delta\psi(t)  = 0 \mbox{ in } \Omega \mbox{ and }\,\,  \frac{\partial\psi}{\partial n}(t) = h(t) \mbox{ on } \Gamma.
 \end{aligned}
\right\}
\end{equation}
\end{lemma}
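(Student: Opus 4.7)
The plan is to reduce \eqref{aroundzero} to a homogeneous boundary value problem by subtracting the stationary lifting, apply the Leray projector to eliminate the pressure, rewrite the result as an abstract inhomogeneous Cauchy problem of the form $y' = \mathcal{A}y - \mathcal{A}z(t)$, and then quote the Duhamel formula for the analytic semigroup generated by $\mathcal{A}$.

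\textbf{Step 1 (change of unknown).} Since $(\mathbf{g},h)\in C^{1}(0,T;\mathbf{V}^{3/2}(\Gamma))\times C^{1}(0,T;H^{1/2}(\Gamma))$, \cref{liftboussinesq} yields $(\mathbf{w}(\cdot),\pi(\cdot),\psi(\cdot))=L_{0}(\mathbf{g},h)\in C^{1}(0,T;\mathbf{V}^{2}(\Omega)\times \mathcal{H}^{1}(\Omega)/\mathbb{R}\times H^{2}(\Omega))$. Set $\widetilde{\mathbf{u}}=\mathbf{u}-\mathbf{w}$ and $\widetilde{\phi}=\phi-\psi$; then $\widetilde{\mathbf{u}}\in \mathbf{V}_{0}^{1}(\Omega)$ and $\partial_{n}\widetilde{\phi}=0$ on $\Gamma$, so in particular $P\widetilde{\mathbf{u}}=\widetilde{\mathbf{u}}$. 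Subtracting the stationary equations \eqref{LDN} from \eqref{aroundzero} gives
\[
\partial_{t}\widetilde{\mathbf{u}}-\Delta \widetilde{\mathbf{u}}+\nabla(p-\pi)=\bm{\beta}\widetilde{\phi}-\partial_{t}\mathbf{w},\qquad \partial_{t}\widetilde{\phi}-\Delta \widetilde{\phi}=-\partial_{t}\psi.
\]

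\textbf{Step 2 (projecting out the pressure).} Applying $P$ to the first equation kills the pressure gradient. Since $\widetilde{\mathbf{u}}\in \mathbf{V}_{0}^{1}(\Omega)$, we have $P\partial_{t}\widetilde{\mathbf{u}}=\partial_{t}\widetilde{\mathbf{u}}$ and $P\Delta \widetilde{\mathbf{u}}$ is the Stokes action. Using \eqref{LDN} we also have the key identity $P(\bm{\beta}\psi)=-P\Delta\mathbf{w}$ (apply $P$ to $-\Delta\mathbf{w}+\nabla\pi=\bm{\beta}\psi$) and $\Delta\psi=0$. Writing $\widetilde{P}\mathbf{u}_{\phi}=(P\mathbf{u},\phi)^{T}$ and $\widetilde{P}L_{0}(\mathbf{g},h)=(P\mathbf{w},\psi)^{T}$, a direct computation—substituting $\mathbf{u}=\widetilde{\mathbf{u}}+\mathbf{w}$, $\phi=\widetilde{\phi}+\psi$ and using the above identities—produces
\[
\frac{d}{dt}\widetilde{P}\mathbf{u}_{\phi}(t)=\mathcal{A}\bigl(\widetilde{P}\mathbf{u}_{\phi}(t)-\widetilde{P}L_{0}(\mathbf{g}(t),h(t))\bigr),
\]
since $\widetilde{P}\mathbf{u}_{\phi}-\widetilde{P}L_{0}(\mathbf{g},h)=(\widetilde{\mathbf{u}},\widetilde{\phi})^{T}$ lies in $D(\mathcal{A})$.

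\textbf{Step 3 (Duhamel).} The operator $\mathcal{A}=\begin{pmatrix}P\Delta & A_{2}\\ 0 & \Delta\end{pmatrix}$ is upper triangular: its diagonal entries are the Stokes operator on $\mathbf{V}_{n}^{0}(\Omega)$ and the Neumann-Laplace operator on $L^{2}(\Omega)$, both generators of analytic semigroups, while the off-diagonal $A_{2}$ is bounded. Hence $\mathcal{A}$ generates an analytic semigroup $\{e^{t\mathcal{A}}\}_{t\geq 0}$. Applying the variation of constants formula to $y(t)=\widetilde{P}\mathbf{u}_{\phi}(t)$, $f(t)=-\mathcal{A}\widetilde{P}L_{0}(\mathbf{g}(t),h(t))$ yields
\[
\widetilde{P}\mathbf{u}_{\phi}(t)=e^{t\mathcal{A}}\widetilde{P}\mathbf{u}_{\phi}(0)+\int_{0}^{t}(-\mathcal{A})e^{(t-s)\mathcal{A}}\widetilde{P}L_{0}(\mathbf{g}(s),h(s))\,ds,
\]
which is the claimed identity.

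\textbf{Main obstacle.} The only subtle point is making sense of $\mathcal{A}\widetilde{P}L_{0}(\mathbf{g},h)$: the first component $P\mathbf{w}$ need not lie in the domain of $P\Delta$ because $\mathbf{w}|_{\Gamma}=\mathbf{g}$ is not zero. This is overcome by using analyticity, which ensures that $e^{(t-s)\mathcal{A}}$ maps into $D(\mathcal{A})$ for $s<t$ with the integrable singular bound $\|\mathcal{A}e^{(t-s)\mathcal{A}}\|\lesssim (t-s)^{-1}$, so that the integrand $(-\mathcal{A})e^{(t-s)\mathcal{A}}\widetilde{P}L_{0}(\mathbf{g}(s),h(s))$ is well defined and integrable; the $C^{1}$-in-time regularity of the boundary data justifies the differentiation in Step 2 in the sense of the mild formulation.
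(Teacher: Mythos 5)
Your proposal is correct and follows essentially the same route as the paper: lift the boundary data by the stationary system \eqref{LDN}, subtract, apply the Leray projector to get an abstract evolution equation for $\widetilde{P}\mathbf{u_{\phi}}$ driven by $\widetilde{P}L_{0}(\mathbf{g},h)$, and conclude by the variation-of-constants formula for the analytic semigroup $e^{t\mathcal{A}}$ (the paper keeps the forcing as $-\tfrac{d}{dt}\widetilde{P}L_{0}(\mathbf{g},h)$ and integrates by parts afterwards, which is only a cosmetic difference from your commuting $\mathcal{A}$ into the Duhamel integral). One small correction to your final remark: $\|\mathcal{A}e^{(t-s)\mathcal{A}}\|\lesssim (t-s)^{-1}$ is \emph{not} integrable; the integral is made sense of either by exploiting the extra spatial regularity of $\widetilde{P}L_{0}(\mathbf{g},h)$, writing $(-\mathcal{A})e^{(t-s)\mathcal{A}}=(-\mathcal{A})^{1-\alpha}e^{(t-s)\mathcal{A}}(-\mathcal{A})^{\alpha}$ for small $\alpha>0$ so the kernel becomes $(t-s)^{-1+\alpha}$, or, as the paper does, by using the $C^{1}$ time regularity of $(\mathbf{g},h)$ to transfer the derivative onto the data before estimating.
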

\begin{proof}
Let $(\mathbf{w}, \pi, \psi)=L_{0}(\mathbf{g},h)$ satisfy system \eqref{LDN}. Thus we will have $(\mathbf{w}, \pi, \psi) \in C^{1}(0,T; \V^{2}(\Omega))\times C^{1}(0,T; H^{1}(\Omega)/\R) \times C^{1}(0,T; H^{2}(\Omega))$. Let us define $\mathbf{y}=\mathbf{u-w},q=p-\pi$ and $\tau=\phi-\psi$ and we will get :
\begin{equation}\label{juju}
\left. \begin{aligned}
&\frac{\partial \mathbf{y}}{\partial t} -\Delta \mathbf{y} + \nabla q= \bm{\beta}\tau-\frac{\partial \mathbf{w}}{\partial t} \mbox{ in } Q,  \\ & \mbox{div }\mathbf{y}=0 \mbox{ in } Q;\quad \mathbf{y}=0 \mbox{ on } \Sigma,\\& \mathbf{y}(0)=\mathbf{u}_{0}-\mathbf{w}(0) \mbox{ in }\Omega,\\
&\frac{\partial \tau}{\partial t} - \Delta\tau  = -\frac{\partial \psi}{\partial t} \mbox{ in } Q,  \\
& \frac{\partial \tau}{\partial n} = 0 \mbox{ on } \Sigma,\quad \tau(0)=\phi_{0}-\psi(0)  \mbox{ in } \Omega.
 \end{aligned}
\right\}
\end{equation}
Now observe that $(\by, q, \tau)\in W(0,T;\V^{1}_{0}(\Omega),\V^{-1}(\Omega)) \times L^{2}(0,T; L^{2}(\Omega)) \times L^{2}(0,T;H^{2}(\Omega))$.  Apply Leray projector $P$ in the first equation of \eqref{juju} and by using $P\mathbf{y}(t)=\mathbf{y}(t)$, we will get : 
\begin{align*}
\mathbf{y}'(t)&=A\mathbf{y}(t)+A_{2}\tau(t)-P\mathbf{w}'(t),\\ \mathbf{y}(0)&=P(\mathbf{u}_{0}-\mathbf{w}(0)),
\end{align*}
where $A: D(A)=\V^{2}(\Omega)\cap \V^{1}_{0}(\Omega)\mapsto \V^{0}_{n}(\Omega)$ is defined by : $A\by=P\Delta \by$ and $A_{2}: L^{2}(\Omega) \mapsto \V^{0}_{n}(\Omega)$ is given by $A_{2} \tau=P(\bm{\beta}\tau)$.
So we can rewrite system \eqref{juju} as :
\begin{align*}
\widehat{\mathbf{z}}'(t)&=\mathcal{A}\widehat{\mathbf{z}}(t)-\mathbf{b}'(t),\\ \widehat{\mathbf{z}}(0)&=\widehat{\mathbf{z}}_{0},
\end{align*}
where $$\widehat{\mathbf{z}}(t)=
\begin{pmatrix}
\mathbf{y}(t)\\ \tau(t)
\end{pmatrix}, \mathbf{b}(t)=
\begin{pmatrix}
P\mathbf{w}(t)\\ \psi(t)
\end{pmatrix}$$
Now we can write $\begin{pmatrix}
P\Delta & P(\bm{\beta}\tau)\\ 0 & \Delta
\end{pmatrix}= \begin{pmatrix}
P\Delta & 0\\ 0 & \Delta
\end{pmatrix} + \begin{pmatrix}
0 & A_{2}\\ 0 & \Delta
\end{pmatrix}= \mathcal{A}_{1}+ \mathcal{A}_{2}$. Clearly, $\mathcal{A}_{1}$ will generate analytic semigroup in $\V^{0}_{n}(\Omega) \times L^{2}(\Omega)$. Observe that $\mathcal{A}$ is a bounded perturbation of $\mathcal{A}_{1}$. Hence it will also generate an analytic semigroup.
Denoting the semigroup generated by $\mathcal{A}$ as $e^{t\mathcal{A}}$, the solution can be written as : 
\begin{align*}
\widehat{\mathbf{z}}(t)&=e^{t\mathcal{A}}\widehat{\mathbf{z}}_{0}-\int_{0}^{t} e^{(t-s)\mathcal{A}}\mathbf{b}'(s)ds\\ &=e^{t\mathcal{A}}(\widehat{\mathbf{z}}_{0}+\mathbf{b}(0))-\mathcal{A}\int_{0}^{t}e^{(t-s)\mathcal{A}}\mathbf{b}(s)ds-\mathbf{b}(t).
\end{align*}
So we have :
\begin{align*}
\widehat{\mathbf{z}}(t)+\mathbf{b}(t)=e^{t\mathbf{A}}(\widehat{\mathbf{z}}_{0}+\mathbf{b}(0))-\mathcal{A}\int_{0}^{t}e^{(t-s)\mathcal{A}}\mathbf{b}(s)ds.
\end{align*}
Thus we obtain $$
\begin{pmatrix}
P\mathbf{u}(t)\\ \phi(t)
\end{pmatrix}=e^{t\mathcal{A}}
\begin{pmatrix}
P\mathbf{u}(0)\\ \phi(0)
\end{pmatrix}
-\mathcal{A}\int_{0}^{t}e^{(t-s)\mathcal{A}}
\begin{pmatrix}
P\mathbf{w}(s)\\ \psi(s)
\end{pmatrix} ds.
$$
So, 
\begin{align*}
\widetilde{P}\mathbf{u}_{\phi}(t)=e^{t\mathcal{A}}\widetilde{P}\mathbf{u}_{\phi}(0)+\int_{0}^{t}(-\mathcal{A})e^{(t-s)\mathcal{A}}\widetilde{P}L_{D,N}(\mathbf{g},h)(s)ds.
\end{align*}
\end{proof}
\begin{remark}\label{adjoint and extension}
%(i) The adjoint of $(\mathcal{A},D(\mathcal{A}))$ in $\mathbf{V}_{n}^{0}(\Omega) \times L^{2}(\Omega)$ is given by:
% \begin{align*}
% \mathcal{A}^{*}
% \begin{pmatrix}
% \mathbf{r}\\ \xi
% \end{pmatrix}=
% \begin{pmatrix}
% P\Delta r  \\ \mathbf{\beta}.\mathbf{r}+ \Delta \xi 
% \end{pmatrix}
% \end{align*}
% for $(\mathbf{r},\xi)\in D(\mathcal{A}^{*})$ with $D(\mathcal{A}^{*})=D(A_{1}^{*})\times D(A_{4}^{*})$ where 
% \begin{align*}
% D(A_{1}^{*})&=  \mathbf{V}_{0}^{1}(\Omega) \cap \mathbf{V}^{2}(\Omega) \\  D(A_{4}^{*})&=\{\xi \in H^{2}(\Omega)\,|\,\, \frac{\partial \xi}{\partial \mathbf{n}}=0 \mbox{ on }\Gamma \}.
% \end{align*}
 If we just differentiate equation \eqref{boussioperator1} formally, then we can get 
\begin{align*}
\wt{P}\mathbf{u_{\phi}}'= \mc{A}\wt{P}\mathbf{u_{\phi}}+(-\mc{A})\wt{P}L_{0}(\mathbf{g},h).
\end{align*}
But  to make sense of the term $\mc{A}\wt{P}\mathbf{u_{\phi}}$, we want to extend the operator $\mc{A}$ to $\wt{\mc{A}}$ such that $\wt{P}\mathbf{u_{\phi}}$ belongs to the domain of $\wt{\mc{A}}$. We can extend the operator $\mc{A}$ to an unbounded operator $\wt{\mc{A}}$ with domain $D(\wt{A})=\V_{n}^{0}(\Omega)\times L^{2}(\Omega)$ in $(D(\mc{A^{*}}))'$. [ See \cite{PRATO} for details ]
\end{remark}
If the boundary data $\bg.\mathbf{n}\neq 0$, then it is not possible to write equation \eqref{aroundzero} in the following operator form :
\begin{align*}
\mathbf{u_{\phi}}'&= \wt{\mc{A}}\mathbf{u_{\phi}}+(-\wt{\mc{A}})L_{0}(\mathbf{g},h) \mbox{  with  }\mathbf{u_{\phi}}(0)=\begin{pmatrix}
\bu_{0} \\ \phi_{0}
\end{pmatrix},
\end{align*}
as we look for solution $\mathbf{u_{\phi}}$ in the space $\V^{0}(\Omega)\times L^{2}(\Omega)$ but the operator $\wt{\mc{A}}$ is defined on $\V_{n}^{0}(\Omega)\times L^{2}(\Omega)$. To overcome this difficulty we split $\bu_{\phi}=\wt{P}\bu_{\phi} + (I-\wt{P})\bu_{\phi}$, where an evolution equation is satisfied by $\wt{P}\bu$ and a quasi-stationary equation is satisfied by $(I-\wt{P})\bu_{\phi}$.\\
 
 Now we are in a position to state a new definition of weak solution involving $\wt{P}\bu_{\phi}$ and $(I-\wt{P})\bu_{\phi}$ :
\begin{definition}\label{weak solution}
Assume that $(\mathbf{g},h)\in L^{2}(0,T;\mathbf{V}^{0}(\Gamma)) \times L^{2}(0,T;(H^{1}(\Gamma))')$ and $(P\mathbf{u}_{0},\phi_{0})\in \mathbf{V}_{n}^{0}(\Omega)\times L^{2}(\Omega)$.  A function $(\mathbf{u},\phi) \in L^2(0,T;V^{0}(\Omega)\times L^{2}(\Omega))$ is a \textbf{weak solution} to equation \eqref{aroundzero} if $ \wt{P}\mathbf{u_{\phi}}$ is a weak solution of the following evolution system :
\begin{align}
\wt{P}\mathbf{u_{\phi}}'&= \wt{\mc{A}}\wt{P}\mathbf{u_{\phi}}+(-\wt{\mc{A}})\wt{P}L_{0}(\mathbf{g},h)
,\mbox{   with   } \wt{P}\mathbf{u_{\phi}}(0)=\wt{P}\mathbf{u_{\phi}}^{0}, \label{pu01}\\ \mbox{ and   }
 (I-\wt{P})\mathbf{u_{\phi}}(.)&=(I-\wt{P})L_{0}(\mathbf{g}(.),h(.)).  \label{pu02}
\end{align}
where $\widetilde{\mathcal{A}}$ is extension of the operator $\mathcal{A}$ as discussed in remark \ref{adjoint and extension}.
\end{definition}
 Now by definition of weak solution of an evolution equation, a function \\ $\wt{P}\mathbf{u_{\phi}}\in L^2(0,T;V_{n}^{0}(\Omega)\times L^{2}(\Omega))$ is a weak solution to \eqref{pu01} iff for all $(\mathbf{\Phi},\psi)\in D(\mc{A}^{*})$, the mapping $t \mapsto \int_{\Omega} (P\mathbf{u}(t)\mathbf{\Phi}+\phi(t)\psi)$  belongs to $H^{1}(0,T)$ and satisfies
\begin{align} 
 \frac{d}{dt}\int_{\Omega}(P\mathbf{u}(t)\mathbf{\Phi}+\phi(t)\psi)=\langle (P\mathbf{u}(t)+\phi(t)\psi),\mc{A}^{*}(\mathbf{\Phi},\psi)\rangle_{\V_{n}^{0}(\Omega)\times L^{2}(\Omega)} \\ \notag + \langle(-\wt{\mc{A})}\wt{P}L_{0}(\mathbf{g}(t),h(t)),(\mathbf{\Phi},\psi) \rangle_{(D(A^{*}))',D(A^{*})} . 
 \end{align}

\begin{theorem}\label{regularity1}
(i) For all $(P\mathbf{u}_{0},\phi_{0})\in \mathbf{V}_{n}^{0}(\Omega)\times L^{2}(\Omega)$ and $(\mathbf{g},h)\in L^{2}(0,T;\mathbf{V}^{0}(\Gamma))\times L^{2}(0,T;(H^{1}(\Gamma))')$, equation \eqref{pu01}-\eqref{pu02} admits a unique solution $(P\mathbf{u},\phi)$ belongs to $\mathbf{V}^{1/2-\epsilon,1/4-\epsilon/2}(Q)\times H^{1/2-\epsilon,1/4-\epsilon/2}(Q)$ for any $\epsilon>0$. This solution $(P\bu,\phi)=\wt{P}\bu_{\phi}$ and $(I-\wt{P})\bu_{\phi} $ satisfies :
\begin{align}\label{estimate1}
&||\wt{P}\mathbf{u}_{\phi}||_{L^2(0,T;\V_{n}^{1/2-\epsilon}(\Omega)\times H^{1/2-\epsilon}(\Omega))}+\notag\\&||\wt{P}\mathbf{u}_{\phi}||_{H^{1/4-\epsilon/2}(0,T;\V^{0}(\Omega)\times L^{2}(\Omega))}+||(\wt{I}-\wt{P})\mathbf{u}_{\phi}||_{L^2(0,T;\V^{1/2}(\Omega)\times H^{1/2}(\Omega))} \notag\\& \leq C\left(||\wt{P}\mathbf{u}_{\phi}(0)||_{\V_{n}^{0}(\Omega)\times L^{2}(\Omega)}+||(\mathbf{g},h)||_{L^2(0,T;\V^{0}(\Gamma)\times (H^{1}(\Gamma))')} \right),\quad \, \forall\, \epsilon > 0 .
\end{align}

(ii)   For all $(P\mathbf{u}_{0},\phi_{0})\in \mathbf{V}^{3/2}(\Omega)\times H^{3/2}(\Omega)$ and $(\mathbf{g},h)\in L^{2}(0,T;\mathbf{V}^{2}(\Gamma))\cap H^{1}(0,T;\mathbf{V}^{0}(\Gamma))\times L^{2}(0,T;H^{1}(\Gamma))\cap H^{1}(0,T;(H^{1}(\Gamma))')$, equation \eqref{pu01}-\eqref{pu02} admits a unique weak solution $(P\mathbf{u},\phi)\in \V^{5/2-\epsilon,5/4-\epsilon/2} \times H^{5/2- \epsilon,5/4-\epsilon/2}$ under the compatibility condition :
\begin{align}\label{compatibility}
\left.\left(\wt{P}\left[(\mathbf{u}_{0},\phi_{0})-L_{0}(\mathbf{g}(0),h(0))\right]\right)\right\vert_{\Gamma}=0.
\end{align}

(iii)  Let $0 \leq s < 1,$ for all $(P\mathbf{u}_{0},\phi_{0})\in \mathbf{V}^{0 \vee (s-1/2)}(\Omega)\times H^{0 \vee (s-1/2)}(\Omega)$ and $(\mathbf{g},h)\in \mathbf{V}^{s,s/2}(\Sigma)\times (L^{2}(0,T;(H^{1-s}(\Gamma))')\cap H^{s/2}(0,T;((H^{1}(\Gamma))')))$, equation \eqref{pu01}-\eqref{pu02} admits a unique weak solution\\ $(P\mathbf{u},\phi)\in \mathbf{V}^{s+1/2-\epsilon,s/2+1/4-\epsilon/2}(Q)\times H^{s+1/2-\epsilon,s/2+1/4-\epsilon/2}(Q)$.

(iv)   Let $1< s \leq 2,$ for all $(P\mathbf{u}_{0},\phi_{0})\in \mathbf{V}^{(s-1/2)}(\Omega)\times H^{(s-1/2)}(\Omega)$ and $(\mathbf{g},h)\in \mathbf{V}^{s,s/2}(\Sigma)\times (L^{2}(0,T;H^{s-1}(\Gamma))\cap H^{s/2}(0,T;((H^{1}(\Gamma))')))$, equation \eqref{pu01}-\eqref{pu02} admits a unique weak solution $(P\mathbf{u},\phi)\in \mathbf{V}^{s+1/2-\epsilon,s/2+1/4-\epsilon/2}(Q)\times H^{s+1/2-\epsilon,s/2+1/4-\epsilon/2}(Q)$ under the compatibility condition \eqref{compatibility}.
\end{theorem}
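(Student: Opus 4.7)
The plan is to use the decomposition \eqref{pu01}--\eqref{pu02} to separate two essentially independent regularity questions. The quasi-stationary part $(I-\wt{P})\bu_\phi=(I-\wt{P})L_0(\mathbf{g},h)$ is handed to us by the lifting operator, and in each of the four parts its regularity follows immediately from Theorem \ref{liftboussinesq} applied with the value of $s$ appropriate to the boundary data, together with the fact that $L_0$ acts pointwise in time and therefore transmits the time regularity of $(\mathbf{g},h)$ linearly. For this half, uniqueness is trivial. The substance of the theorem therefore concentrates on the evolution equation \eqref{pu01}.

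For \eqref{pu01} I would work with the mild-solution representation
\begin{align*}
\wt{P}\bu_\phi(t)=e^{t\mc{A}}\wt{P}\bu_\phi(0)+\int_{0}^{t}(-\mc{A})\,e^{(t-s)\mc{A}}\,\wt{P}L_0(\mathbf{g}(s),h(s))\,ds .
\end{align*}
Because $\mc{A}$ is a bounded perturbation of the block-diagonal operator $\operatorname{diag}(P\Delta,\Delta)$ on $\V_{n}^{0}(\Omega)\times L^{2}(\Omega)$, it generates an analytic semigroup; this gives existence and uniqueness of the mild solution and makes available the usual maximal-regularity toolkit. The strategy is then to combine two standard ingredients: for the homogeneous term, the bound $\|e^{t\mc{A}}\widehat z_{0}\|_{L^{2}(0,T;D((-\mc{A})^{\theta+1/2-\ep}))\cap H^{\theta+1/2-\ep}(0,T;\V_{n}^{0}\times L^{2})}\le C\|\widehat z_{0}\|_{D((-\mc{A})^{\theta})}$, and for the convolution, a Raymond-type lemma (as used in \cite{RAMO}) stating that for $b\in L^{2}(0,T;D((-\mc{A})^{\alpha}))$ with $0<\alpha<1/2$ the term $\int_{0}^{t}(-\mc{A})e^{(t-s)\mc{A}}b(s)\,ds$ belongs to $L^{2}(0,T;D((-\mc{A})^{\alpha}))\cap H^{\alpha}(0,T;\V_{n}^{0}\times L^{2})$.

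Part (i) then corresponds to $\alpha=1/4-\ep/2$: Theorem \ref{liftboussinesq} with $s=-1$ yields $\wt{P}L_0(\mathbf{g},h)\in L^{2}(0,T;\V_{n}^{1/2}(\Omega)\times H^{1/2}(\Omega))\hookrightarrow L^{2}(0,T;D((-\mc{A})^{1/4-\ep/2}))$, and summing with the homogeneous bound produces \eqref{estimate1}. Part (iii) is formally identical with $\alpha=s/2+1/4-\ep/2<1/2$; the required time regularity $H^{s/2}$ of $\wt{P}L_0(\mathbf{g},h)$ is transferred from the time regularity assumed on $(\mathbf{g},h)$ through linearity of $L_0$. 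For parts (ii) and (iv) we are aiming at $\alpha>1/2$, where the homogeneous term $e^{t\mc{A}}\wt{P}\bu_\phi(0)$ can only reach the claimed Sobolev space when the initial datum belongs to the appropriate fractional power domain, i.e.\ satisfies the boundary traces characterising $D((-\mc{A})^{\theta})$ for $2\theta>1/2$, and this is exactly what the compatibility condition \eqref{compatibility} imposes on $\wt{P}[\bu_\phi(0)-L_0(\mathbf{g}(0),h(0))]$.

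The main difficulty, and the step requiring the most care, is the precise identification of the fractional-power scale $D((-\mc{A})^{\theta})$ as $2\theta$ crosses the trace thresholds $1/2$ and $3/2$, where the Dirichlet condition on the Stokes block and the Neumann condition on the heat block begin to constrain membership; this is the source of the unavoidable $\ep$-losses in the statement. A secondary point to verify is that the zeroth-order off-diagonal coupling $A_{2}\tau=P(\bm{\beta}\tau)$ appearing in $\mc{A}$ is a bounded perturbation on every relevant interpolation scale, so that the analytic-semigroup estimates descend from the diagonal operator to $\mc{A}$ itself uniformly in $\ep$; this is routine but must be carried out to close the argument.
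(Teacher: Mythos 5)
Your overall route coincides with the paper's: split off the quasi-stationary part via $(I-\wt{P})\bu_{\phi}=(I-\wt{P})L_{0}(\mathbf{g},h)$ and the continuity of the lifting operator, and treat \eqref{pu01} through the Duhamel formula \eqref{duhamel} with fractional powers of $-\mc{A}$; for part (i) your argument (with $\wt{P}L_{0}(\mathbf{g},h)\in L^{2}(0,T;\V_{n}^{1/2}(\Omega)\times H^{1/2}(\Omega))$ and the convolution estimate at level $\alpha=1/4-\ep/2$) is essentially the paper's proof. The genuine gap is in parts (ii)--(iv). Your convolution lemma is stated for $0<\alpha<1/2$, and this restriction is not cosmetic: $\wt{P}L_{0}(\mathbf{g},h)(t)$ has the nonzero boundary value $\mathbf{g}(t)$, so it never belongs to $D((-\mc{A})^{\alpha})$ once $2\alpha$ exceeds the trace threshold, and no choice of $\alpha$ in the untouched Duhamel integral can produce the $\V^{5/2-\ep}\times H^{5/2-\ep}$ regularity claimed in (ii). The device you are missing is the integration by parts in time that the paper performs to obtain \eqref{PUexpression}: writing $-\mc{A}\int_{0}^{t}e^{(t-s)\mc{A}}\wt{P}L_{0}(\mathbf{g},h)(s)\,ds$ as $\wt{P}L_{0}(\mathbf{g},h)(t)-e^{t\mc{A}}\wt{P}L_{0}(\mathbf{g},h)(0)-\int_{0}^{t}e^{(t-s)\mc{A}}\wt{P}L_{0}(\mathbf{g}',h')(s)\,ds$, which is where the assumed $H^{1}$ (resp.\ $H^{s/2}$) time regularity of the boundary data actually enters, and which is also the reason the compatibility condition \eqref{compatibility} appears attached to the difference $\wt{P}[(\mathbf{u}_{0},\phi_{0})-L_{0}(\mathbf{g}(0),h(0))]$ rather than to the initial datum alone. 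You correctly observe that the compatibility condition concerns this difference, but without the rewritten formula your sketch gives no mechanism by which the convolution term reaches the stated regularity.

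Relatedly, your claim that part (iii) is ``formally identical with $\alpha=s/2+1/4-\ep/2<1/2$'' is false for $s\ge 1/2$: there $\alpha\ge 1/2$, your lemma no longer applies, and the same boundary-trace obstruction blocks a direct argument; saying that the $H^{s/2}$ time regularity ``is transferred through linearity of $L_{0}$'' does not by itself show how it is exploited in the Duhamel integral. The paper avoids this entirely by proving only the endpoint cases $s=0$ (direct semigroup estimates) and $s=2$ (the integrated-by-parts formula \eqref{PUexpression} together with the isomorphism and interpolation theorems) and then obtaining (iii) and (iv) by interpolation in $s$. To close your argument you would either need to add this interpolation step, or prove a genuinely refined convolution estimate that uses fractional time regularity of $\wt{P}L_{0}(\mathbf{g},h)$, neither of which is currently in your proposal.
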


\begin{proof}
$(i)$  We will follow the technique of proof used in \cite{RAMO} for the Stokes equation. We have :
\begin{equation}\label{duhamel}
\wt{P}\mathbf{u}_{\phi}(t)=e^{t\mc{A}}\wt{P}\mathbf{u}_{\phi}(0)-\mc{A} \int_0^t e^{(t-s)\mc{A}}\wt{P}L_{0}(\mathbf{g},h)(s)ds.
\end{equation}
Our aim is to prove $\wt{P}\mathbf{u}_{\phi}(t)\in \V_{n}^{1/2-\epsilon}(\Omega) \times H^{1/2-\epsilon}(\Omega)$. Let us consider $0<\epsilon \leq \frac{1}{2}$. Now  from \cite[Chapter II.1.6, proposition 6.1]{PRATO}, we know that $D((-\mc{A})^{1/4-\epsilon/4})\simeq \V^{1/2-\epsilon/2}(\Omega)  \times H^{1/2-\epsilon}(\Omega)$. So, it is enough to show $\wt{P}\mathbf{u}_{\phi}(t) \in D((-\mc{A})^{1/4-\epsilon/4})$.

$\wt{P}\circ L_{0}$ is linear and continuous from $\V^{0}(\Gamma)\times (H^{1}(\Gamma))'$ to $\V_{n}^{1/2}(\Omega) \times H^{1/2}(\Omega)$. Thus if $(\mathbf{g},h)\in L^2(0,T;\V^{0}(\Gamma))\times L^{2}(0,T;(H^{1}(\Gamma))')$, then $(-\mc{A})^{1/4-\epsilon/4}\wt{P}L_{0}(\mathbf{g},h) \in L^2(0,T;\V_{n}^{0}(\Omega))\times L^{2}(0,T;L^{2}(\Omega))$. We also have from \eqref{duhamel} :

\begin{align*}
&||(-\mc{A})^{1/4-\epsilon/2}\wt{P}\mathbf{u}_{\phi}(t)||_{\V_{n}^{0}(\Omega)\times L^{2}(\Omega)}\\& \leq ||e^{t\mc{A}}(-\mc{A})^{1/4-\epsilon/2}\wt{P}\mathbf{u}_{\phi}(0)||_{\V_{n}^{0}(\Omega)\times L^{2}(\Omega)}+\\&\int_0^t ||(-\mc{A})^{1-\epsilon/4}e^{(t-s)\mc{A}}||_{\V_{n}^{0}(\Omega)\times L^{2}(\Omega)} ||(-\mc{A})^{1/4-\epsilon/4}\wt{P}L_{0}(\mathbf{g}(s),h(s))||_{\V_{n}^{0}(\Omega)\times L^{2}(\Omega)} ds\\
& \leq Ct^{-1/4+\epsilon/2}||\wt{P}\mathbf{u}_{\phi}(0)||_{\V_{n}^{0}(\Omega)\times L^{2}(\Omega)}+\\&\int_0^t (t-s)^{-1+\epsilon/4}||(-\mc{A})^{1/4-\epsilon/4}\wt{P}L_{0}(\mathbf{g}(s),h(s))||_{\V_{n}^{0}(\Omega)\times L^{2}(\Omega)} ds\\&(\mbox{ By using \cite[Chapter 2.6, theorem 6.13]{PAZY}})\\& \leq  Ct^{-1/4+\epsilon/2}||\wt{P}\mathbf{u}_{\phi}(0)||_{\V_{n}^{0}(\Omega)\times L^{2}(\Gamma)}+\int_0^t (t-s)^{-1+\epsilon/4}C||(\mathbf{g}(s),h(s))||_{\V^{0}(\Gamma)\times (H^{1}(\Gamma))'}\\&(\mbox{By using \cite[Chapter 2,6, theorem 6.10]{PAZY} })\\&\leq Ct^{-1/4+\epsilon/2}||\wt{P}\mathbf{u}_{\phi}(0)||_{\V_{n}^{0}(\Omega)\times L^{2}(\Omega)}+\\&C\left(\int_0^t (t-s)^{-2+\epsilon/2} ds\right)\left(\int_0^t ||(\mathbf{g}(s),h(s))||_{\V^{0}(\Gamma)\times (H^{1}(\Gamma))'}^{2} ds \right)\\&  \leq Ct^{-1/4+\epsilon/2}||\wt{P}\mathbf{u}_{\phi}(0)||_{\V_{n}^{0}(\Omega)\times L^{2}(\Omega)}+C t^{-1+\epsilon/2} ||(\mathbf{g},h)||_{L^2(0,T;\V^{0}(\Gamma)\times (H^{1}(\Gamma))')} .
\end{align*}
Hence : $\wt{P}\mathbf{u}_{\phi}(t) \in D((-\mc{A})^{1/4-\epsilon/4})$. Also,
\begin{align*}
&||\wt{P}\mathbf{u}_{\phi}||_{L^2(0,T;\V^{1/2-\epsilon}(\Omega))\times H^{1/2-\epsilon}(\Omega)}^{2}=\int_0^T||(-\mc{A})^{-(1/4-\epsilon/2)}(-\mc{A})^{1/4-\epsilon/2}\wt{P}\mathbf{u}_{\phi}(t)||_{\V_{n}^{0}(\Omega)\times L^{2}(\Omega)}^2 dt\\&\leq C\int_0^T ||(-\mc{A})^{1/4-\epsilon/2}\wt{P}\mathbf{u}_{\phi}(t)||_{\V_{n}^{0}(\Omega)\times L^{2}(\Omega)}^2\quad(\mbox{By \cite[Chapter 2.6, lemma 6.3]{PAZY}})\\&\leq C \int_0^T \left(t^{-1/2+\epsilon}||\wt{P}\mathbf{u}_{\phi}(0)||_{\V_{n}^{0}(\Omega)\times L^{2}(\Omega)}^{2}+ t^{-2+\epsilon}||(\mathbf{g},h)||_{L^2(0,T;\V^{0}(\Gamma))}^{2} dt \right)\,\\&(\mbox{As }(a+b)^2\leq 2(a^2+b^2))
\end{align*}
Hence we deduce that:
\begin{equation}\label{PUestimate1}
||\wt{P}\mathbf{u}_{\phi}||_{L^2(0,T;V^{1/2-\epsilon}(\Omega)\times H^{1/2-\epsilon}(\Omega))}\leq C\left(||\wt{P}\mathbf{u}_{\phi}(0)||_{V^{0}_{n}(\Omega)\times L^{2}(\Omega)}+||(\mathbf{g},h)||_{L^2(0,T;V^0(\Gamma))\times (H^{1}(\Gamma))'}\right).
\end{equation}
Our next aim is to show that : $\wt{P}\bu_{\phi}\in H^{1/4-\epsilon/2}(0,T;\V^{0}(\Omega)\times L^{2}(\Omega))$. Moreover from equation \eqref{duhamel} after differentiating w.r.t t:
\begin{align*}
\frac{dP\mathbf{u}_{\phi}}{dt}&=\mc{A}e^{t\wt{\mc{A}}}\wt{P}\mathbf{u}_{\phi}(0)-\mc{A}\wt{P}L_{0}(\mathbf{g},h)(t)-\mc{A}\int_0^t\mc{A}e^{(t-s)\mc{A}}\wt{P}L_{0}(\mathbf{g},h)(s) ds\\&=-(-\mc{A})^{3/4+\epsilon/2}e^{t\mc{A}}(-\mc{A})^{1/4-\epsilon/2}\wt{P}\mathbf{u}_{\phi}(0)+(-\mc{A})^{3/4+\epsilon/2}(-\wt{\mc{A}})^{1/4-\epsilon/2}\wt{P}L_{0}(\mathbf{g},h)(t)\\&+(-\mc{A})^{3/4+\epsilon/2}(-\mc{A})^{1/4-\epsilon/2}\int_0^t \mc{A}e^{(t-s)\mc{A}}\wt{P}L_{0}(\mathbf{g},h)(s)ds\\&=(-\mc{A})^{3/4+\epsilon/2}[-(-\mc{A})^{1/4-\epsilon/2}\wt{P}\mathbf{u}_{\phi}(t)+(-\mc{A})^{1/4-\epsilon/2}\wt{P}L_{0}(\mathbf{g},h)(t)]
\end{align*}
That is we have: 
\begin{align*}
(-\mc{A})^{-3/4-\epsilon/2}\wt{P}\mathbf{u}_{\phi}'=[-(-\mc{A})^{1/4-\epsilon/2}\wt{P}\mathbf{u}_{\phi}(t)+(-\mc{A})^{1/4-\epsilon/2}\wt{P}L_{0}(\mathbf{g},h)(t)].
\end{align*}
That is we have: 
\begin{align*}
(-\mc{A})^{-3/4-\epsilon/2}\wt{P}\mathbf{u}_{\phi}'=[-(-\mc{A})^{1/4-\epsilon/2}\wt{P}\mathbf{u}_{\phi}(t)+(-\mc{A})^{1/4-\epsilon/2}\wt{P}L_{0}(\mathbf{g},h)(t)].
\end{align*}
We deduce that:
\begin{align*}
&||\wt{P}\mathbf{u}_{\phi}'||_{L^2(0,T;[D((-\mc{A})^{3/4+\epsilon/2})]')}\\&\leq C\left(||(-\mc{A})^{1/4-\epsilon/2}\wt{P}\mathbf{u}_{\phi}||_{L^2(0,T;\V_{n}^{0}(\Omega)\times L^{2}(\Omega))}+||(-\mc{A})^{1/4-\epsilon/2}\wt{P}L_{0}(\mathbf{g},h)||_{L^2(0,T;\V_{n}^{0}(\Omega)\times (H^{1}(\Gamma))')}\right)\\ &\leq C\left(||\wt{P}\mathbf{u}_{\phi}||_{L^2(0,T;V^{1/2-\epsilon}(\Omega)\times H^{1/2-\epsilon}(\Omega))}+||(\mathbf{g},h)||_{L^2(0,T;\V^{0}(\Gamma)\times (H^{1}(\Gamma))')} \right)
\end{align*}
Hence we have:
\begin{equation}\label{3}
||\wt{P}\mathbf{u}_{\phi}'||_{L^2(0,T;[D((-\mc{A})^{3/4+\epsilon/2})]')}\leq C\left(||\wt{P}\mathbf{u}_{\phi}(0)||_{\V_{n}^{0}(\Omega)\times L^{2}(\Omega)}+ ||(\mathbf{g},h)||_{L^2(0,T;\V^{0}(\Gamma)\times (H^{1}(\Gamma))')}\right).
\end{equation}
By using interpolation theorem \cite[Chapter 1, theorem 9.7]{LM}, we obtain:
\begin{align}\label{PUestimate2}
||\wt{P}\mathbf{u}_{\phi}||_{H^{1/4-\epsilon/2}(0,T;\V_{n}^{0}(\Omega)\times L^{2}(\Omega))}\leq C\left(||\wt{P}\mathbf{u}_{\phi}(0)||_{\V^{0}_{n}(\Omega)\times L^{2}(\Omega)}+||(\mathbf{g},h)||_{L^2(0,T;\V^0(\Gamma)\times (H^{1}(\Gamma))')}\right).
\end{align}
Now let $(\bg,h)\in L^{2}(0,T;\V^0(\Gamma)\times (H^{1}(\Gamma))')$ and we know that the operator $L_{0}$ is linear and continuous from $\V^{0}(\Gamma)\times (H^{1}(\Gamma))'$ to $\V^{1/2}(\Omega)\times H^{1/2}(\Gamma)$. Observe that :
\begin{align}\label{IPUestimate}
||(I-\wt{P})\mathbf{u}_{\phi}||_{L^2(0,T;\V^{1/2}(\Omega)\times H^{1/2}(\Omega))}^2&=\int_0^T ||(I-\wt{P})\mathbf{u}_{\phi}(t)||_{\V^{1/2}(\Omega)\times H^{1/2}(\Omega)}^2 \notag\\&=\int_0^{T} ||(I-\wt{P})L_{0}(\mathbf{g},h)(t)||_{\V^{1/2}(\Omega)\times H^{1/2}(\Omega)}^2 \notag\\&\leq C||(\mathbf{g},h)||_{L^{2}(0,T;\V^{0}(\Gamma)\times (H^{1}(\Gamma))'}^2 .
\end{align}
Thus by the relations \eqref{PUestimate1}, \eqref{PUestimate2} and \eqref{IPUestimate}, we can establish the estimate \eqref{estimate1}.\\

$(ii)$ Let us consider the case when $(\mathbf{g},h)\in V^{2,1}(\Sigma) \times L^{2}(0,T; H^{1}(\Gamma))\cap H^{1}(0,T; (H^{1}(\Gamma))')$. Also it is given that $(P\mathbf{u}_{0},\phi(0)) \in V^{3/2}(\Omega)\times H^{3/2}(\Omega)$ and $\Big(\wt{P}\Big[(\mathbf{u}_{0},\phi_{0})-L_{0}(\mathbf{g}(0),h(0))\Big]\Big)|_{\Gamma}=0
$.  Our aim is to prove that $\wt{P}\mathbf{u}_{\phi}\in \V^{5/2-\epsilon,5/4-\epsilon/2}(Q) \times H^{5/2-\epsilon,5/4-\epsilon/2}(Q)$ for all $\epsilon>0$. We have: 
\begin{align*}
\wt{P}\mathbf{u}_{\phi}(t)&=e^{t\mc{A}}P\mathbf{u}_{\phi}(0)-\mc{A} \int_0^t e^{(t-s)\mc{A}}\wt{P}L_{0}(\mathbf{g},h)(s)ds\\&=e^{t\mc{A}}\wt{P}\mathbf{u}_{\phi}(0)+ \int_0^t \frac{d}{ds}(e^{(t-s)\mc{A}})\wt{P}L_{0}(\mathbf{g},h)(s)ds\\&=e^{t\mc{A}}\wt{P}\mathbf{u}_{\phi}(0) -\int_0^t e^{(t-s)\mc{A}}\wt{P}L_{0}(\mathbf{g}'(s),h'(s))ds +\\&\quad \wt{P}L_{0}(\mathbf{g},h)(t) -e^{t\mc{A}}\wt{P}L_{0}(\mathbf{g},h)(0).
\end{align*}
Thus we obtain :
\begin{align}\label{PUexpression}
&\wt{P}\mathbf{u}_{\phi}(t)=
e^{t\mc{A}}(\wt{P}\mathbf{u}_{\phi}(0)-\wt{P}L_{0}(\mathbf{g},h)(0))+\wt{P}L_{0}(\mathbf{g},h)(t)-\notag\\& \quad \int_0^t e^{(t-s)\mc{A}}\wt{P}L_{0}(\mathbf{g}',h')(s)ds .
\end{align}
At first we want to show that $\wt{P}\mathbf{u}_{\phi}\in L^2(0,T;\V^{5/2-\epsilon}(\Omega)\times H^{5/2-\epsilon}(\Omega))$. Now by \cite[Chapter II.1.6, proposition 6.1]{PRATO}, we have $D((-\mc{A})^{5/4-\epsilon/2})\simeq \V^{5/2-\epsilon}(\Omega)\times H^{5/2-\epsilon}(\Omega)$. So it is enough to show $\wt{P}\mathbf{u}(t)\in D((-\mc{A})^{5/4-\epsilon/2})$. 

Since $(\mathbf{g},h)\in L^2(0,T;V^2(\Gamma))\cap H^1(0,T;V^0(\Gamma)) \times L^{2}(0,T; H^{1}(\Gamma))\cap H^{1}(0,T; (H^{1}(\Gamma))')$, we have:  
\begin{align*}
\wt{P}L_{0}(\mathbf{g},h)\in L^2(0,T;\V^{5/2}(\Omega)\times H^{5/2}(\Omega))\cap H^1(0,T;\V^{1/2}(\Omega)\times H^{1/2}(\Omega)).
\end{align*} Now,
\begin{align*}
&||(-\mc{A})^{5/4-\epsilon/2}\int_0^t e^{(t-s)\mc{A}}\wt{P}L_{0}(\mathbf{g}',h')(s) ds||_{\V_{n}^{0}(\Omega)\times L^{2}(\Omega)}\\&=||\int_0^t (-\mc{A})^{1-\epsilon/4}e^{(t-s)\mc{A}}(-\mc{A})^{1/4-\epsilon/4}\wt{P}L_{0}(\mathbf{g}',h')(s) ds||_{\V_{n}^{0}(\Omega) \times L^{2}(\Omega)}\\&\leq C\int_0^t (t-s)^{-1+\epsilon/4}||(-\mc{A})^{1/4-\epsilon/4}\wt{P}L_{0}(\mathbf{g}',h')(s)||_{\V_{n}^{0}(\Omega)\times L^{2}(\Omega)} ds\\&\leq C\int_0^t (t-s)^{-1+\epsilon/4}||(\mathbf{g}',h')(s)||_{\V^{0}(\Gamma)\times (H^{1}(\Gamma))'} ds \\&=C t^{\epsilon/4}||(\mathbf{g}',h')||_{L^{2}(0,T;\V^0(\Gamma)\times (H^{1}(\Gamma))')} .
\end{align*}
 Also we obtain :
\begin{align*}
&||(-\mc{A})^{5/4-\epsilon/2}e^{t\mc{A}}(\wt{P}\mathbf{u}_{\phi}(0)-\wt{P}L_{0}(\mathbf{g},h)(0))||_{\V_{n}^{0}(\Omega)\times L^{2}(\Omega)}\\&=||(-\mc{A})^{1/2-\epsilon/4}e^{t\mc{A}}(-\mc{A})^{3/4-\epsilon/4}(\wt{P}\mathbf{u}_{\phi}(0)-\wt{P}L_{0}(\mathbf{g},h)(0))||_{\V_{n}^{0}(\Omega)\times L^{2}(\Omega)}\\&\leq C t^{-1/2+\epsilon/4}||(-\mc{A})^{3/4-\epsilon/4}(\wt{P}\mathbf{u}_{\phi}(0)-\wt{P}L_{0}(\mathbf{g},h)(0))||_{\V_{n}^{0}(\Omega)\times L^{2}(\Omega)}\\& \leq Ct^{-1/2+\epsilon/4}||(\wt{P}\mathbf{u}_{\phi}(0)-\wt{P}L_{0}(\mathbf{g},h)(0))||_{\V^{3/2-\epsilon/2}(\Omega)\times H^{3/2-\epsilon/2}}\\& \leq Ct^{-1/2+\epsilon/4}(||\wt{P}\mathbf{u}_{\phi}(0)||_{\V^{3/2}(\Omega)\times H^{3/2}(\Omega)}+
||\wt{P}L_{0}(\mathbf{g},h)(0)||_{\V^{3/2}(\Omega)\times H^{3/2}(\Omega)}) .
\end{align*}
Thus we have deduced that:
\begin{align}\label{2PU1}
&||\wt{P}\mathbf{u}_{\phi}||_{L^{2}(0,T;\V^{5/2-\epsilon}(\Omega)\times H^{5/2-\epsilon})} \notag\\ &\leq C(||\wt{P}\mathbf{u}_{\phi}(0)||_{\V^{3/2}(\Omega)\times H^{3/2}(\Omega)}+||(\mathbf{g},h)||_{\V^{2,1}(\Sigma)\times L^{2}(0,T; H^{1}(\Gamma))\cap H^{1}(0,T; (H^{1}(\Gamma))')}) .
\end{align}
Our next aim is to prove that $\wt{P}\bu_{\phi} \in H^{1}(0,T; \V^{5/4-\epsilon/2}(\Omega))\times H^{1}(0,T; H^{5/4-\epsilon/2}(\Omega)).$\\

Now by differentiating the expression \eqref{PUexpression} of $\wt{P}\bu_{\phi}$, we shall get:
\begin{align}\label{differentiatePU}
\frac{d\wt{P}\mathbf{u}_{\phi}}{dt}=\mc{A}e^{t\mc{A}}(\wt{P}\mathbf{u}_{\phi}(0)-\wt{P}L_{0}(\mathbf{g},h)(0))-\mc{A}\int_0^t e^{(t-s)\mc{A}}\wt{P}L_{0}(\mathbf{g}',h')(s)ds.
\end{align}

 Now $(\mathbf{g}',h')\in L^2(0,T;V^{0}(\Gamma)) \times L^{2}(0,T; (H^{1}(\Gamma))')$ and by applying (i) of theorem \eqref{regularity1} we already have $\wt{P}\mathbf{u}_{\phi} \in H^{1/4-\epsilon/2}(0,T;V^0(\Omega)\times L^{2}(\Omega))$. Now recall that 
 \begin{align*}
 \wt{P}\mathbf{u}_{\phi}(t)=e^{t\mc{A}}\wt{P}\mathbf{u}_{\phi}(0)-\mc{A}\int_0^t e^{(t-s)\mc{A}}\wt{P}L_{0}(\mathbf{g},h)(s)ds .
 \end{align*}
 As $\wt{P}\mathbf{u}_{\phi}(0)\in \V^{3/2}(\Omega) \times H^{3/2}(\Omega)$, we have $e^{t\mc{A}}\wt{P}\mathbf{u}_{\phi}(0) \in H^{1/4}(0,T;\V^{0}(\Omega)\times L^{2}(\Omega))$. So, we deduce that :
\begin{align}\label{differentiateterm2} 
 \mc{A}\int_0^t e^{(t-s)\mc{A}}\wt{P}L_{0}(\mathbf{g}',h')(s) ds \in H^{1/4-\epsilon/2}(0,T;V^0(\Omega)\times L^{2}(\Omega)). 
\end{align} 
Moreover it is given that $\wt{P}(\mathbf{u}_{\phi}(0)-L_{0}(\mathbf{g},h)(0))\in \V^{3/2}(\Omega)\times H^{3/2}(\Omega)$. Now by an isomorphism theorem \cite[Chapter II.3.2, theorem 2.1]{PRATO} with $D(\mc{A})=\V^2(\Omega)\cap \V_{0}^{1}(\Omega) \times H^{2}(\Omega)$ and $H=\V_{n}^{0}(\Omega)\times L^{2}(\Omega),$ we will get for initial condition in $\V^{1}(\Omega)\times H^{1}(\Omega)$, solution is in $L^2(0,T;\V^2(\Omega) \times H^{2}(\Omega))\cap H^1(0,T;L^2(\Omega)\times L^{2}(\Omega))$. \\

 We can apply same result by replacing $D(\mc{A})$ by $D(\mc{A}^2)$ and $H$ by $D(\mc{A})$ and we can conclude that for initial condition in $\V^3(\Omega)$, solution belongs to $L^2(0,T;\V^4(\Omega) \times H^{4}(\Omega))\cap H^1(0,T;\V^2(\Omega)\times H^{2}(\Omega))$ .\\

 Then by interpolation theorem \cite[Chapter 1, theorem 9.7]{LM}, we can conclude that for $(\wt{P}\mathbf{u}_{\phi}(0)-\wt{P}L_{0}(\mathbf{g},h)(0))\in \V^{3/2}(\Omega)\times H^{3/2}(\Omega)$, we have $e^{t\mc{A}}(\wt{P}\mathbf{u}_{\phi}(0)-\wt{P}L_{0}(\mathbf{g},h)(0)) \in L^2(0,T;\V^{5/2}(\Omega)\times H^{5/2}(\Omega))\cap H^{1}(0,T;\V^{1/2}(\Omega)\times H^{1/2}(\Omega))\hookrightarrow H^{1/4}(0,T;\V^2(\Omega)\times H^{2}(\Omega)).$Thus 
 \begin{align}\label{differentiateterm1}
 \mc{A}e^{t\mc{A}}(\wt{P}\mathbf{u}_{\phi}(0)-\wt{P}L_{0}(\mathbf{g},h)(0)) \in H^{1/4}(0,T; \V^{0}(\Omega)\times L^{2}(\Omega)).
 \end{align} 
 Therefore, by using the relations \eqref{differentiatePU}, \eqref{differentiateterm2} and \eqref{differentiateterm1}, we can conclude that :
 \begin{align}\label{2PU2}
 \frac{d\wt{P}\mathbf{u}_{\phi}}{dt} \in H^{1/4-\epsilon/2}(0,T;\V^{0}(\Omega)\times L^{2}(\Omega)) .
 \end{align}
 It is clear from \eqref{2PU1} and \eqref{2PU2} that : $(P\mathbf{u},\phi)\in \V^{5/2-\epsilon,5/4-\epsilon/2} \times H^{5/2- \epsilon,5/4-\epsilon/2}$ . \\
 
 $(iii)$ As we have already proved the results for $s=0$ and $s=2$, we can get our required result by interpolation.
\end{proof}
We would like to answer the question if we can take $\epsilon=0$ in the results of theorem \ref{regularity1}. We give a complete answer to this question in the next theorem, following the argument of \cite{GRUBBS} as in the case of Navier-Stokes equation.
\begin{theorem}
Assume that $(P\mathbf{u}_{0},\phi_{0})\in \mathbf{V}^{0 \vee (s-1/2)}(\Omega)\times H^{0 \vee (s-1/2)}(\Omega)$, $(\mathbf{g},h)\in \mathbf{V}^{s,s/2}(\Sigma)\times L^{2}(0,T;(H^{1-s}(\Gamma))')\cap H^{s/2}(0,T;(H^{1}(\Gamma))')$,   with $s \in [0,1)$ and $(\mathbf{g},h)\in \mathbf{V}^{s,s/2}(\Sigma)\times L^{2}(0,T;H^{s-1}(\Gamma))\cap H^{s/2}(0,T;(H^{1}(\Gamma))')$ with $s \in (1,2]$. If $(\bu_{0},\phi_{0})$ and $(\bg(0),h(0))$ satisfy the compatibility condition \eqref{compatibility}, then :
\begin{align}
||(P\bu,\phi)||_{\mathbf{V}^{s+1/2,s/2+1/4}(Q)\times H^{s+1/2,s/2+1/4}(Q)} \leq  C(||P\bu_{0}||_{\mathbf{V}^{0 \vee (s-1/2)}(\Omega)}+\notag\\ ||\phi_{0}||_{H^{0 \vee (s-1/2)}(\Omega)}+ ||\bg||_{\mathbf{V}^{s,s/2}(\Sigma)} + ||h||_{L^{2}(0,T;H^{s-1}(\Omega))\cap H^{s/2}(0,T;(H^{1}(\Omega))')}). 
\end{align}
\end{theorem}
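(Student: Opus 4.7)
The plan is to upgrade the $\epsilon$-loss estimates of Theorem \ref{regularity1} to sharp estimates by establishing the two endpoint cases $s=0$ and $s=2$ without any $\epsilon$ loss and then interpolating for the intermediate values. As before, the decomposition $\bu_\phi = \wt{P}\bu_\phi + (I-\wt{P})\bu_\phi$ reduces matters to $\wt{P}\bu_\phi$, since the second summand is controlled directly by the continuity of $L_0$ established in Theorem \ref{liftboussinesq}.

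For the endpoint $s=0$, the difficulty is that the Duhamel representation
\[
\wt{P}\bu_\phi(t) = e^{t\mc{A}}\wt{P}\bu_\phi(0) - \mc{A}\int_0^t e^{(t-s)\mc{A}}\wt{P}L_0(\bg,h)(s)\,ds
\]
contains the convolution with $\mc{A}e^{t\mc{A}}$, which is precisely at the borderline of maximal parabolic regularity and forced the $\epsilon$ loss before. Following Grubb, I would instead exploit the transposition formulation: test $\wt{P}\bu_\phi$ against an arbitrary $(\mathbf{f}_3,f_4)\in L^2(0,T;\V^0_n(\Omega)\times L^2(\Omega))$ through the adjoint backward parabolic system \eqref{rs}. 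The adjoint pair $(\mathbf{r},s)$ has homogeneous boundary data and full parabolic regularity $\V^{2,1}(Q)\times(L^2(0,T;H^2(\Omega))\cap H^1(0,T;L^2(\Omega)))$, so by the sharp anisotropic trace theorem \cite[Chap.~1, Thm.~2.1]{LM}, $-\partial_{\mathbf{n}}\mathbf{r}+\wt{\pi}\mathbf{n}-c(\wt{\pi})\mathbf{n}$ lies in $L^2(0,T;\V^{1/2}(\Gamma))$ and $s$ lies in $L^2(0,T;H^1(\Gamma))\cap H^{3/4}(0,T;L^2(\Gamma))$. Dualising these traces against $(\bg,h)\in\V^{0,0}(\Sigma)\times L^2(0,T;(H^1(\Gamma))')$ through \eqref{transposition} then yields the sharp bound on $(P\bu,\phi)$ in $\V^{1/2,1/4}(Q)\times H^{1/2,1/4}(Q)$, with no logarithmic loss.

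For the endpoint $s=2$, I would start from the integration-by-parts representation \eqref{PUexpression} already derived in the proof of Theorem \ref{regularity1}. The compatibility condition \eqref{compatibility} ensures that $\wt{P}\bu_\phi(0)-\wt{P}L_0(\bg,h)(0)$ lies in $D((-\mc{A})^{3/4})$ with the correct vanishing trace, so the semigroup term belongs sharply to $\V^{5/2,5/4}(Q)\times H^{5/2,5/4}(Q)$ by the isomorphism theorem \cite[Chap.~II.3.2, Thm.~2.1]{PRATO}. The middle term $\wt{P}L_0(\bg,h)(t)$ lies in the same space by Theorem \ref{liftboussinesq} combined with the hypothesis on $(\bg,h)$, and the remaining convolution with $\wt{P}L_0(\bg',h')$ lies in $L^2(0,T;\V^{1/2}(\Omega)\times H^{1/2}(\Omega))\cap H^1(0,T;[D((-\mc{A})^{3/4})]')$ by maximal parabolic regularity, which by interpolation gives the required regularity in $t$.

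With the two endpoints sharp, the full range is recovered by Lions–Magenes interpolation \cite[Chap.~1, Thm.~9.6]{LM} on anisotropic Sobolev spaces, distinguishing $s\in[0,1)$ (no compatibility needed) from $s\in(1,2]$ (where the trace in \eqref{compatibility} is preserved under interpolation of the data spaces). The main obstacle is clearly the $s=0$ step: removing the $\epsilon$ via direct semigroup arguments is impossible, and one must pass through the transposition identity and sharp parabolic trace estimates for the adjoint coupled system. The extra coupling $A_2=P(\bm{\beta}\,\cdot)$ between the velocity and temperature equations mildly complicates the adjoint analysis compared with the decoupled Stokes/heat case treated in \cite{RAMO,GRUBBS}, but since the coupling is a bounded perturbation of the diagonal generator, the pseudodifferential bounds transfer without essential change.
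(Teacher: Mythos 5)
Your plan diverges from the paper's proof, and both of your endpoint arguments have genuine gaps. The paper does not argue by semigroup representations or by the transposition identity at all here: it rewrites the first equation of \eqref{aroundzero} as an equation for $P\bu$, uses that $(I-P)\bu=\nabla q$ with $q(t)$ harmonic solving a Neumann problem with data $\bg(t)\cdot\mathbf{n}$, absorbs $\Delta\bigl((I-P)\bu\bigr)-\partial_t (I-P)\bu$ into a modified pressure, observes that $P\bu\cdot\mathbf{n}=0$ on $\Sigma$, and then invokes the sharp results of Grubb--Solonnikov \cite{GRUBBS} for the instationary Stokes problem with tangential boundary data, together with sharp parabolic regularity for the Neumann heat equation satisfied by $\phi$, before following \cite{RAMO} for the remaining values of $s$. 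Your $s=0$ step does not deliver what you claim: pairing $(P\bu,\phi)$ with $(\mathbf{f}_3,f_4)\in L^{2}(0,T;\V^{0}_{n}(\Omega)\times L^{2}(\Omega))$ through \eqref{transposition} and the $\V^{2,1}$-regularity of the adjoint system \eqref{rs} only controls the $L^{2}(Q)$ norm of $(P\bu,\phi)$ --- which the paper already has --- and says nothing about the $\V^{1/2,1/4}(Q)\times H^{1/2,1/4}(Q)$ norm. To reach a positive-order anisotropic norm by duality you would have to test against elements of the \emph{dual} of $\V^{1/2,1/4}(Q)$, and then the adjoint data are of negative order, so the clean trace regularity you quote for $(\mathbf{r},s)$ is no longer available; what is needed at that point is precisely the sharp pseudodifferential trace machinery of \cite{GRUBBS} that your route was meant to avoid.

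The $s=2$ endpoint is also not established by your term-by-term treatment of \eqref{PUexpression}. With $\bg\in L^{2}(0,T;\V^{2}(\Gamma))\cap H^{1}(0,T;\V^{0}(\Gamma))$, the lifted term $\wt{P}L_{0}(\bg,h)$ lies only in $L^{2}(0,T;\V^{5/2}(\Omega)\times H^{5/2}(\Omega))\cap H^{1}(0,T;\V^{1/2}(\Omega)\times H^{1/2}(\Omega))$, and this space is \emph{not} contained in $H^{5/4}(0,T;\V^{0}(\Omega)\times L^{2}(\Omega))$: a Fourier-in-time argument shows the temporal regularity of such a function caps at $H^{1}$, since the extra spatial regularity carries no weight in $\tau$; so the claim that the middle term ``lies in the same space'' is false. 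The same obstruction kills the convolution term: maximal regularity places it in $L^{2}(0,T;D((-\mc{A})^{5/4}))\cap H^{1}(0,T;D((-\mc{A})^{1/4}))$, and no interpolation between an $L^{2}$ and an $H^{1}$ time regularity can produce the exponent $5/4$; one would need $\partial_t$ of the convolution in $H^{1/4}(0,T;\V^{0}\times L^{2})$, which fails because the forcing $\wt{P}L_{0}(\bg',h')$ is merely $L^{2}$ in time. The sharp $\V^{5/2,5/4}$ regularity holds only for the \emph{sum}, through cancellations between the pieces, which is exactly why the paper abandons the Duhamel representation at this stage and reduces to the zero-normal-trace Stokes system plus the heat equation, where the $\epsilon$-free anisotropic estimates are known. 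Your final interpolation step is fine in spirit (and is what the paper, via \cite{RAMO}, does for intermediate $s$, with the usual care about the compatibility condition \eqref{compatibility} for $s>1$), but it has nothing to interpolate until the endpoints are genuinely proved.
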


\begin{proof}
We can write the first equation of \eqref{aroundzero} as : 
\begin{align}\label{pupde}
\frac{\partial P\bu}{\partial t}- \Delta P\bu + \nabla p= \mathrm{\beta}\phi + \Delta((I-P)\bu)- \frac{\partial (I-P)\bu}{\partial t}.
\end{align}
Let us consider $\bg\in \V^{2,1}(\Sigma)$. Due to theorem \ref{regularity1}, we have $P\bu \in \V^{5/2-\epsilon, 5/4-\epsilon/2}(Q)$ and $(I-P)\bu \in L^{2}(0,T; \V^{5/2}(\Omega)) \cap H^{1}(0,T; \V^{1/2}(\Omega))$, $\phi \in H^{5/2-\epsilon, 5/4-\epsilon/2}(Q)$. Thus from equation \eqref{pupde}, we know the pressure $p \in L^{2}(0,T; H^{1}(\Omega))$. Also, from characterisation of $(I-P)$ we know that $(I-P)\bu= \nabla q$, where $q \in L^{2}(0,T; H^{2}(\Omega)/\R)$ satisfies : 
\begin{align*}
\Delta q(t) = 0 \quad \mbox{in }\Omega,\\ \frac{\partial q}{\partial \mathbf{n}}= \bg(t). \mathbf{n} \quad \mbox{on }\Gamma 
\end{align*}
Thus we can write equation \eqref{pupde} as :
\begin{align}\label{finalpupde}
\frac{\partial P\bu}{\partial t}- \Delta P\bu + \nabla \pi= \mathrm{\beta}\phi ,
\end{align}
where $\pi= p- \Delta q + \frac{\partial q}{\partial t}=p+ \frac{\partial q}{\partial t}L_{z}$. Also observe that :
\begin{align*}
P\bu |_{\Sigma}.\mathbf{n} &= \bu |_{\Sigma}.\mathbf{n} - (I-P)\bu |_{\Sigma}.\mathbf{n}\\ &= \bg.\mathbf{n} - \nabla q |_{\Sigma}.\mathbf{n}\\ &= 0 .
\end{align*}
Hence $P\bu$ satisfies Stokes equation \eqref{finalpupde} with the condition $P\bu |_{\Sigma}.\mathbf{n}=0$.
 Now we are in a position to use the regularity results for instationary Stokes equation with nonhomogeneous boundary condition as in \cite{GRUBBS} to conclude that $P\bu \in \V^{5/2,5/4}(Q)$. Also in \eqref{aroundzero}, $\phi$ satisfies heat equation. So we can use regularity result of heat equation to conclude that $\phi \in H^{5/2,5/4}(Q)$.\\
 Now we can follow same steps as in \cite{RAMO} to get our desired result for $s\in [0,1)\cup (1,2]$. 
\end{proof}
We want to find an appropriate boundary condition for which we can get continuous in time solution for equation \eqref{aroundzero}. The next theorem will give us such a kind of boundary condition :
\begin{corollary}\label{continuity}
If $(\bg,h)\in \V^{3/4,3/4}(\Sigma)\times L^{2}(0,T;(H^{1/4}(\Gamma))') \cap H^{3/4}(0,T;(H^{1}(\Gamma))')$, \\ $(P\bu_{0},\phi_{0}) \in (\V^{3/4}_{n}(\Omega)\times H^{3/4}(\Omega))$ and if the following condition 
\begin{align*}
\wt{P}\Big[(\mathbf{u}_{0},\phi_{0})-L_{0}(\mathbf{g}(0),h(0))\Big]=0,
\end{align*}
holds, then : \\ $(\bu,\phi)\in C([0,T];\V^{3/4}(\Omega))\cap L^{2}(0,T;\V^{5/4}(\Omega)) \times C([0,T];H^{3/4}(\Omega))\cap L^{2}(0,T;H^{5/4}(\Omega))$.
\end{corollary}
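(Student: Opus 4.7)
The strategy is to split $\bu_\phi=\wt P\bu_\phi+(I-\wt P)\bu_\phi$ as in Definition \ref{weak solution} and treat the two pieces separately, combining the $L^{2}$-regularity coming from Theorem \ref{regularity1} with the anisotropic Lions-Magenes embedding
\[
L^{2}(0,T;X)\cap H^{\sigma}(0,T;Y)\hookrightarrow C([0,T];[X,Y]_{1/(2\sigma)}),\qquad \sigma>\tfrac{1}{2},\ X\hookrightarrow Y,
\]
see \cite[Chapter 1]{LM}. The hypotheses of the corollary strictly dominate those of Theorem \ref{regularity1}(iii) at $s=3/4$: indeed $\V^{3/4,3/4}(\Sigma)\hookrightarrow\V^{3/4,3/8}(\Sigma)$, $H^{3/4}(0,T;(H^{1}(\Gamma))')\hookrightarrow H^{3/8}(0,T;(H^{1}(\Gamma))')$, $(P\bu_{0},\phi_{0})\in\V^{3/4}_{n}\times H^{3/4}\hookrightarrow\V^{1/4}\times H^{1/4}$, and the compatibility condition \eqref{compatibility} is assumed. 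Theorem \ref{regularity1}(iii) therefore yields $(P\bu,\phi)\in\V^{5/4,5/8}(Q)\times H^{5/4,5/8}(Q)$, which already delivers the $L^{2}(0,T;\V^{5/4}(\Omega))\cap L^{2}(0,T;H^{5/4}(\Omega))$ part of the conclusion.

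For $(I-\wt P)\bu_\phi=(I-\wt P)L_{0}(\bg,h)$ in view of \eqref{pu02}, two applications of the lifting estimate Theorem \ref{liftboussinesq}, namely $L_{0}\in\mathcal L(\V^{3/4}(\Gamma)\times(H^{1/4}(\Gamma))',\V^{5/4}(\Omega)\times H^{5/4}(\Omega))$ at $s=-1/4$ and $L_{0}\in\mathcal L(\V^{0}(\Gamma)\times(H^{1}(\Gamma))',\V^{1/2}(\Omega)\times H^{1/2}(\Omega))$ at $s=-1$, show pointwise in time that
\[
L_{0}(\bg,h)\in L^{2}(0,T;\V^{5/4}\times H^{5/4})\cap H^{3/4}(0,T;\V^{1/2}\times H^{1/2}).
\]
Since $[\V^{5/4},\V^{1/2}]_{2/3}=\V^{3/4}$ (and likewise $[H^{5/4},H^{1/2}]_{2/3}=H^{3/4}$), the embedding above with $\sigma=3/4$ gives $L_{0}(\bg,h)\in C([0,T];\V^{3/4}\times H^{3/4})$, and hence the same for $(I-\wt P)\bu_\phi$.

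To treat the remaining $\wt P$-piece, set $v:=\wt P\bu_\phi-\wt P L_{0}(\bg,h)$. The compatibility condition ensures $v(0)=0$, and \eqref{pu01} yields $v'=\wt{\mc A}v-\wt P(L_{0}(\bg,h))'$ on $(0,T)$. Extend $\wt P L_{0}(\bg,h)$ to all of $\mathbb R$ preserving its $H^{3/4}(\mathbb R;\V^{1/2}\times H^{1/2})$-regularity, and take the Fourier transform in time: one gets $\widehat{v}(\xi)=-(I+\wt{\mc A}(i\xi-\wt{\mc A})^{-1})\widehat{\wt P L_{0}(\bg,h)}(\xi)$, and the operator $\wt{\mc A}(i\xi-\wt{\mc A})^{-1}$ is bounded on $\V^{1/2}\times H^{1/2}$ uniformly in $\xi$ by analyticity of $e^{t\mc A}$. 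Hence $v\in H^{3/4}(0,T;\V^{1/2}\times H^{1/2})$, and combined with the $L^{2}(0,T;\V^{5/4}\times H^{5/4})$-bound from Step 1 a second invocation of the anisotropic embedding gives $v\in C([0,T];\V^{3/4}_{n}\times H^{3/4})$. Summing with the previous step concludes $\bu_\phi\in C([0,T];\V^{3/4}\times H^{3/4})$. The main obstacle is precisely this fractional-order analysis: the forcing $(\wt P L_{0}(\bg,h))'$ is only a distribution of order $-1/4$ in time, so making the Duhamel convolution rigorous requires either the Fourier/spectral argument above or, equivalently, a fractional integration by parts in the line of the proof of Theorem \ref{regularity1}(ii), both crucially relying on the analyticity of $e^{t\mc A}$ and on the interpolation identity $[\V^{5/4},\V^{1/2}]_{2/3}=\V^{3/4}$ that pins down the right intermediate space.
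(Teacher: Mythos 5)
Your decomposition $\bu_\phi=\wt P\bu_\phi+(I-\wt P)\bu_\phi$ and your treatment of the quasi-stationary part coincide with the paper: $(I-\wt P)\bu_\phi=(I-\wt P)L_{0}(\bg,h)$ is placed in $L^{2}(0,T;\V^{5/4}\times H^{5/4})\cap H^{3/4}(0,T;\V^{1/2}\times H^{1/2})$ via the lifting operator and then in $C([0,T];\V^{3/4}\times H^{3/4})$ by the Lions--Magenes embedding, exactly as in \eqref{IPUspace}-type estimates. Where you diverge is the projected part: the paper never argues directly at $s=3/4$; it records the endpoint estimate \eqref{g00} (data in $L^{2}(0,T;\V^{0}(\Gamma))\times L^{2}(0,T;(H^{1}(\Gamma))')$, giving $\V^{1/2,1/4}$) and the estimate \eqref{g11} (data in $\V^{1,1}(\Sigma)\times L^{2}(0,T;L^{2}(\Gamma))\cap H^{1}(0,T;(H^{1}(\Gamma))')$ with the vanishing condition, so that the semigroup term in \eqref{PUexpression} drops and each remaining term lands in $H^{2/3}(0,T;\V^{5/6}\times H^{5/6})$), and then interpolates the two solution maps to reach the $3/4$-data. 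Your substitute is a direct maximal-regularity/Fourier-multiplier argument for $v=\wt P\bu_\phi-\wt P L_{0}(\bg,h)$; this is a genuinely different route, and in principle a viable one, but as written it has a gap. A minor point first: Theorem \ref{regularity1}(iii) only gives $\V^{5/4-\epsilon,5/8-\epsilon/2}$; the $\epsilon$-free bound $\V^{5/4,5/8}$ that you need (both for the stated conclusion and for the interpolation pair $[\V^{5/4},\V^{1/2}]_{2/3}=\V^{3/4}$) is the subsequent theorem, which is applicable here because the hypothesis of the corollary implies \eqref{compatibility}.

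The genuine gap is in the step ``extend $\wt P L_{0}(\bg,h)$ to $\mathbb R$ preserving its $H^{3/4}$-regularity and take the Fourier transform: one gets $\widehat v(\xi)=-(I+\wt{\mc A}(i\xi-\wt{\mc A})^{-1})\widehat{F}(\xi)$.'' That identity holds for the solution of the extended equation on all of $\mathbb R$, not for the actual $v$, which is only defined on $(0,T)$ and satisfies $v(0)=0$. For a generic regularity-preserving extension $F$, the whole-line solution $\tilde v$ produced by your multiplier formula satisfies $\tilde v(0)\neq 0$, so $v=\tilde v+e^{t\mc A}\bigl(v(0)-\tilde v(0)\bigr)$ on $(0,T)$, and the correction term is \emph{not} in $H^{3/4}(0,T;\V^{1/2}\times H^{1/2})$ when $\tilde v(0)$ is only in $\V^{3/4}\times H^{3/4}$ (the isomorphism theorem gives roughly $H^{3/4}(0,T;\V^{1/4}\times H^{1/4})$ there); hence your asserted intermediate regularity of $v$ does not follow as stated. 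The conclusion can be rescued --- either handle the correction separately, noting $e^{t\mc A}w_{0}\in C([0,T];\V^{3/4}\times H^{3/4})\cap L^{2}(0,T;\V^{5/4}\times H^{5/4})$ for $w_{0}\in\V^{3/4}_{n}\times H^{3/4}$, or choose the extension compatibly with $v(0)=0$ (constant continuation of $F$ by $F(0)$ for $t<0$, which keeps $H^{3/4}$ since $3/4<3/2$, together with a cut-off/decay argument for large times) --- but this is precisely the work your write-up skips. Relatedly, the uniform bound on $\wt{\mc A}(i\xi-\wt{\mc A})^{-1}$ along $i\mathbb R$ does not follow from analyticity alone: $0$ is an eigenvalue of the Neumann Laplacian in the temperature component, so one must argue with the $\lambda_{0}$-shifted operator (as the paper implicitly does when using fractional powers $(-\mc A)^{\alpha}$) or exploit that the zero eigenvalue is isolated and semisimple. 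The paper's interpolation argument avoids both of these delicate points, which is what it buys at the price of proving two endpoint estimates rather than one.
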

\begin{proof}
From definition \ref{weak solution}, we know that : $(I-\wt{P})\mathbf{u_{\phi}}=(I-\wt{P})L_{0}(\mathbf{g},h)$ . As $(\bg,h)\in \V^{3/4,3/4}(\Sigma)\times L^{2}(0,T;(H^{1/4}(\Gamma))') \cap H^{3/4}(0,T;(H^{1}(\Gamma))')$, we will obtain 
\begin{align}\label{IPUspace}
(I-\wt{P})\mathbf{u_{\phi}} \in & L^{2}(0,T; \V^{5/4}(\Omega) \times H^{5/4}(\Omega)) \cap H^{3/4}(0,T; \V^{1/2} \times H^{1/2}(\Omega)). \\ & \hookrightarrow C([0,T]; \V^{3/4}(\Omega)) \times C([0,T]; H^{3/4}(\Omega)). \notag
\end{align}
Our aim is to show : $\wt{P}\bu_{\phi} \in C([0,T]; \V^{3/4}(\Omega)) \times C([0,T]; H^{3/4}(\Omega))$.

When $(\bg,h) \in L^{2}(0,T; \V^{0}(\Gamma)) \times L^{2}(0,T; (H^{1}(\Gamma))')$, from $(i)$ of theorem \eqref{regularity1}, we know that
\begin{align}\label{g00}
 \wt{P}\bu_{\phi} \in \V^{1/2,1/4} \times H^{1/2,1/4}.
\end{align} 
  If $(\bg,h)\in \V^{1.1}(\Sigma) \times L^{2}(0,T; L^{2}(\Gamma)) \cap H^{1}(0,T; (H^{1}(\Gamma))')$ and $\wt{P}\Big[(\mathbf{u}_{0},\phi_{0})-L_{0}(\mathbf{g}(0),h(0))\Big]=0$, then \eqref{PUexpression} gives :
\begin{align}\label{PUspace}
\wt{P}\bu_{\phi}(t)=\wt{P}L_{0}(\mathbf{g},h)(t)- \int_0^t e^{(t-s)\mc{A}}\wt{P}L_{0}(\mathbf{g}',h')(s)ds .
\end{align}
Here 
\begin{align}\label{PUspace2term}
\int_0^t e^{(t-s)\mc{A}}\wt{P}L_{0}(\mathbf{g}',h')(s)ds & \in L^{2}(0,T; \V^{2}(\Omega) \times H^{2}(\Omega)) \cap H^{1}(0,T; \V^{1/4}(\Omega) \times H^{1/4}(\Omega)) \notag \\ & \hookrightarrow H^{2/3}(0,T; \V^{5/6}(\Omega)\times H^{5/6}(\Omega)).
\end{align}
Also we have :
\begin{align}\label{PUspace1term}
\wt{P}L_{0}(\mathbf{g},h) & \in L^{2}(0,T; \V^{3/2}(\Omega)\times H^{3/2}(\Omega)) \cap H^{1}(0,T; \V^{1/2}(\Omega) \times H^{1/2}(\Omega))\notag \\& \hookrightarrow H^{2/3}(0,T; \V^{5/6}(\Omega)\times H^{5/6}(\Omega)).
\end{align}
Thus by using \eqref{PUspace1term} and \eqref{PUspace2term}, the relation \eqref{PUspace} gives : when $(\bg,h)\in \V^{1.1}(\Sigma) \times L^{2}(0,T; L^{2}(\Gamma)) \cap H^{1}(0,T; (H^{1}(\Gamma))')$, 
\begin{align}\label{g11}
\wt{P}\bu_{\phi} \in H^{2/3}(0,T; \V^{5/6}(\Omega)\times H^{5/6}(\Omega)) .
\end{align}
By interpolation between \eqref{g00} and \eqref{g11}, for $(\bg,h)\in \V^{3/4,3/4}(\Sigma)\times L^{2}(0,T;(H^{1/4}(\Gamma))') \cap H^{3/4}(0,T;(H^{1}(\Gamma))')$, we get :
\begin{align}\label{aginPUspace}
\wt{P}\bu_{\phi} &\in C([0,T]; \V^{3/4}(\Omega)) \times C([0,T]; H^{3/4}(\Omega)), \notag \\ \mbox{ and  }\wt{P}\bu_{\phi} & \in L^{2}(0,T; \V^{5/4}(\Omega) \times H^{5/4}(\Omega)).
\end{align}
\end{proof}
\begin{remark}
In the proof of the above corollary, we need the condition $\wt{P}\Big[(\mathbf{u}_{0},\phi_{0})-L_{0}(\mathbf{g}(0),h(0))\Big]=0$. Here, \\ $(\bg,h)\in \V^{3/4,3/4}(\Sigma)\times L^{2}(0,T;(H^{1/4}(\Gamma))') \cap H^{3/4}(0,T;(H^{1}(\Gamma))')$. So,
\begin{align*}
 \wt{P}L_{0}(\mathbf{g},h) &\in L^{2}(0,T; \V_{n}^{5/4}(\Omega) \times H^{5/4}(\Omega))\cap H^{3/4}(0,T; \V_{n}^{1/2}(\Omega)\times H^{1/2}(\Omega)), \\ &\hookrightarrow C([0,T]; \V_{n}^{3/4}(\Omega) \times H^{3/4}(\Omega)).
 \end{align*}
 i.e, $\wt{P}L_{0}(\mathbf{g}(0),h(0)) \in \V_{n}^{3/4}(\Omega)\times H^{3/4}(\Omega).$ That's why we need the condition $(P\bu_{0},\phi_{0})\in \V_{n}^{3/4}(\Omega)\times H^{3/4}(\Omega)$ in corollary \ref{continuity}.
\end{remark}
We can extend the result of last corollary with boundary conditions $\mathbf{g} \in \V^{s,s}(\Sigma)$, $h \in L^{2}(0,T;(H^{1-s}(\Gamma))') \cap H^{s}(0,T;(H^{1}(\Gamma))')$ in the following way :
\begin{corollary}\label{continuity general}
Assume that $\frac{1}{2}< s < 1$. If $(\bg,h)\in \V^{s,s}(\Sigma)\times L^{2}(0,T;(H^{1-s}(\Gamma))') \cap H^{s}(0,T;(H^{1}(\Gamma))')$, \\ $(P\bu_{0},\phi_{0}) \in (\V^{s}_{n}(\Omega)\times H^{s}(\Omega))$ and if the following condition 
\begin{align*}
\wt{P}\Big[(\mathbf{u}_{0},\phi_{0})-L_{0}(\mathbf{g}(0),h(0))\Big]=0,
\end{align*}
holds, then: \\ $(\bu,\phi)\in C([0,T];\V^{s}(\Omega))\cap L^{2}(0,T;\V^{s+\frac{1}{2}}(\Omega)) \times C([0,T];H^{s}(\Omega))\cap L^{2}(0,T;H^{s+\frac{1}{2}}(\Omega))$.
\end{corollary}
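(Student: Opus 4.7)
The plan is to mirror the proof of Corollary \ref{continuity}, replacing the fixed value $s = 3/4$ by a general $s \in (1/2, 1)$ and exploiting the extra room $s > 1/2$ in the Sobolev embeddings in time. Following Definition \ref{weak solution}, I will split $\bu_\phi = \wt P \bu_\phi + (I - \wt P)\bu_\phi$ and handle the two pieces separately.

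First I would treat $(I - \wt P)\bu_\phi = (I - \wt P)L_{0}(\bg, h)$ directly. Applying Theorem \ref{liftboussinesq} with the shifted index $s_0 = s - 1 \in (-1/2, 0)$ (noting that $(H^{1-s}(\Gamma))' = H^{s-1}(\Gamma)$ for $s \in (0, 1)$), the lifting $L_{0}$ is continuous from $\V^{s}(\Gamma) \times H^{s-1}(\Gamma)$ into $\V^{s+1/2}(\Omega) \times H^{s+1/2}(\Omega)$. Transporting this spatial gain at every time level and using the time regularity of the boundary data yields
\begin{align*}
(I - \wt P)\bu_\phi \in L^{2}(0, T; \V^{s+1/2}(\Omega) \times H^{s+1/2}(\Omega)) \cap H^{s}(0, T; \V^{1/2}(\Omega) \times H^{1/2}(\Omega)).
\end{align*}
Since $s > 1/2$, a Lions--Magenes trace theorem \cite[Chapter 1, Theorem 3.1]{LM} embeds this intersection continuously into $C([0, T]; [\V^{s+1/2}, \V^{1/2}]_{1/(2s)} \times [H^{s+1/2}, H^{1/2}]_{1/(2s)})$, and a short index computation identifies the interpolation spaces as $\V^{s}(\Omega) \times H^{s}(\Omega)$, which is the continuity claimed for this piece. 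The $L^{2}$ part of the claim is already contained in the display above.

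For the projected component $\wt P \bu_\phi$ the plan is to interpolate the linear solution operator between the two endpoints used in Corollary \ref{continuity}. The base endpoint $\tilde s = 0$ comes from Theorem \ref{regularity1}(i): for boundary data in $L^{2}(0, T; \V^{0}(\Gamma)) \times L^{2}(0, T; (H^{1}(\Gamma))')$ and initial data in $\V^{0}_{n}(\Omega) \times L^{2}(\Omega)$ (no compatibility required) one has $\wt P \bu_\phi \in \V^{1/2, 1/4}(Q) \times H^{1/2, 1/4}(Q)$. The enhanced endpoint $\tilde s = 1$ uses the representation \eqref{PUexpression}: under the compatibility condition the semigroup term drops, and with $(\bg, h) \in \V^{1,1}(\Sigma) \times (L^{2}(0, T; L^{2}(\Gamma)) \cap H^{1}(0, T; (H^{1}(\Gamma))'))$ and $(P\bu_{0}, \phi_{0}) \in \V^{1}_{n}(\Omega) \times H^{1}(\Omega)$ one obtains $\wt P \bu_\phi \in L^{2}(0, T; \V^{3/2}_{n}(\Omega) \times H^{3/2}(\Omega)) \cap H^{1}(0, T; \V^{1/2}_{n}(\Omega) \times H^{1/2}(\Omega))$. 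Interpolating the solution operator at $\theta = s \in (1/2, 1)$ matches the hypotheses of the corollary on both boundary and initial data exactly; a further Lions--Magenes embedding (admissible because the time-regularity index of the interpolated space strictly exceeds $1/2$ once $s > 1/2$) then gives $\wt P \bu_\phi \in L^{2}(0, T; \V^{s+1/2}_{n}(\Omega) \times H^{s+1/2}(\Omega)) \cap C([0, T]; \V^{s}_{n}(\Omega) \times H^{s}(\Omega))$.

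Summing $\wt P \bu_\phi$ and $(I - \wt P)\bu_\phi$ produces the statement. The main obstacle will be the careful setup of the interpolation at the $\tilde s = 1$ endpoint, since the compatibility hypothesis is an affine constraint on the data rather than a linear subspace in the natural Banach-space sense; the standard remedy is to interpolate the solution map on the closed affine subspace determined by $\wt P[(\bu_{0}, \phi_{0}) - L_{0}(\bg(0), h(0))] = 0$, using that the trace at $t = 0$ on $\V^{s,s}(\Sigma) \cap H^{s}(0, T; \V^{0}(\Gamma))$ and the lifting $L_{0}$ are both continuous on the interpolated data spaces, so the constraint is preserved under interpolation.
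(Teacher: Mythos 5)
Your proposal follows the paper's own route: the paper proves Corollary \ref{continuity} (the case $s=3/4$) by exactly this splitting --- treating $(I-\wt{P})\bu_{\phi}=(I-\wt{P})L_{0}(\bg,h)$ via the lifting estimates of Theorem \ref{liftboussinesq} together with a Lions--Magenes type embedding into $C([0,T];\V^{s}(\Omega)\times H^{s}(\Omega))$, and obtaining $\wt{P}\bu_{\phi}$ by interpolating between the $s=0$ regularity of Theorem \ref{regularity1}(i) and the $s=1$ case handled through the representation \eqref{PUexpression}, where the compatibility condition removes the semigroup term --- and then states Corollary \ref{continuity general} as the same argument carried out for general $s\in(\tfrac{1}{2},1)$. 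So your blind proof is essentially the paper's proof (at the same level of rigor in the interpolation step), and your closing remark about interpolating under the compatibility constraint merely makes explicit a point the paper leaves implicit.
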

\subsection{Regularity of pressure}
 We have a fairly good understanding about the existence of the solution $(\bu,\phi)$ of equation \eqref{aroundzero}. Now we want to understand the space where the pressure lies. We have the following result
 \begin{proposition}
  Assume that $(P\bu_{0},\phi_{0}) \in \V^{s-1/2}_{n}(\Omega)\times H^{s-1/2}(\Omega)$,\\ $(\mathbf{g},h)\in \mathbf{V}^{s,s/2}(\Sigma)\times L^{2}(0,T;H^{s-1}(\Gamma))\cap H^{s/2}(0,T;(H^{1}(\Gamma))')$ with $s\in (1,2]$.\\
 $1.$ If $(P\bu,\phi)\in \mathbf{V}^{s+1/2,s/2+1/4}(Q)\times H^{s+1/2,s/2+1/4}(Q)$ obeys \eqref{pu01}-\eqref{pu02}, then there exists a unique pressure $p \in H^{s/2-1}(0,T; \mathcal{H}^{s-1/2}(\Omega))$ such that $(\bu, p, \phi)$ satisfies \eqref{aroundzero}.
 \end{proposition}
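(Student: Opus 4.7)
The plan is to recover the pressure via the classical de Rham recipe applied to the reconstructed velocity field $\bu$, and then to track the regularity of each contribution carefully. First, I would set $\bu = P\bu + (I-P)\bu$ with $(I-P)\bu = (I-P)L_{0}(\bg,h)$ supplied by \eqref{pu02}. An interpolation of Theorem \ref{liftboussinesq} taken with $\bz=0$, combined with the hypotheses on $(\bg,h)$, gives $L_{0}(\bg,h)\in L^{2}(0,T;\V^{s+1/2}(\Omega))\cap H^{s/2}(0,T;\V^{1/2}(\Omega))$ and the analogous bound for the scalar component. Together with $P\bu \in \V^{s+1/2,s/2+1/4}(Q)$ this places $\bu$ in $L^{2}(0,T;\V^{s+1/2}(\Omega))$ and controls $\partial_{t}(I-P)\bu$ in $H^{s/2-1}(0,T;\V^{1/2}(\Omega))$.

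Next, I would introduce the candidate gradient
\begin{equation*}
F := -\partial_{t}\bu+\Delta \bu+\bm{\beta}\phi \,\in\, \mathcal{D}'(Q;\R^{N}).
\end{equation*}
Testing the weak form of \eqref{pu01} against tensor products $\bm{\varphi}(x)\eta(t)$ with $\bm{\varphi}\in \mathcal{V}(\Omega)$ (so that $P\bm{\varphi}=\bm{\varphi}$) and $\eta\in \mathcal{D}(0,T)$, and using the decomposition $\bu = P\bu + (I-P)L_{0}(\bg,h)$ to unfold the source term $(-\wt{\mc{A}})\wt{P}L_{0}(\bg,h)$, one finds $\langle F,\bm{\varphi}\otimes\eta\rangle_{\mathcal{D}'(Q),\mathcal{D}(Q)}=0$. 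The distributional de Rham theorem then produces, uniquely modulo a time-dependent spatial constant, a scalar $p\in\mathcal{D}'(Q)$ satisfying $F=-\nabla p$, which is exactly the first equation of \eqref{aroundzero} in $\mathcal{D}'(Q)$. The boundary condition $\bu|_{\Sigma}=\bg$ follows from $P\bu|_{\Sigma}\cdot\mathbf{n}=0$ together with the construction of $L_{0}$, and divergence-freeness, the initial trace, and the heat equation are already built into \eqref{pu01}--\eqref{pu02}.

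The step I expect to be the main obstacle is the regularity count for $p$. I would fit each piece of $F$ into a common time-space scale $H^{s/2-1}(0,T;H^{s-3/2}(\Omega))$: the source $\bm{\beta}\phi$ lies in $L^{2}(0,T;H^{s+1/2}(\Omega))$; the Laplacian $\Delta\bu$ lies in $L^{2}(0,T;H^{s-3/2}(\Omega))$; and $\partial_{t}\bu$ splits as $\partial_{t}P\bu\in H^{s/2-3/4}(0,T;\V^{0}(\Omega))$, obtained by differentiating $P\bu\in H^{s/2+1/4}(0,T;\V^{0}(\Omega))$, plus $\partial_{t}(I-P)L_{0}(\bg,h)\in H^{s/2-1}(0,T;\V^{1/2}(\Omega))$. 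All three summands embed into $H^{s/2-1}(0,T;H^{s-3/2}(\Omega))$ for $s\in(1,2]$. Pointwise-in-$t$ inversion of the gradient, which is an isomorphism from $\mathcal{H}^{s-1/2}(\Omega)$ onto the polar of $\mathcal{V}(\Omega)$ in $H^{s-3/2}(\Omega)^{N}$ (cf.\ \cite{GALDI}), then delivers $p\in H^{s/2-1}(0,T;\mathcal{H}^{s-1/2}(\Omega))$. Uniqueness is automatic in the quotient $\mathcal{H}^{s-1/2}$, since any two pressure candidates differ by a spatial constant which vanishes modulo $\R$. The friction I anticipate is twofold: certifying the interpolated time-space bound for $L_{0}(\bg,h)$ under the non-standard hypothesis on $h$ that mixes $L^{2}(0,T;H^{s-1}(\Gamma))$ and $H^{s/2}(0,T;(H^{1}(\Gamma))')$, and confirming that the $\partial_{t}P\bu$ contribution, which lives a priori in the sharper class $H^{s/2-3/4}$, can be demoted to $H^{s/2-1}$ with the correct spatial index $H^{s-3/2}$ rather than a weaker one.
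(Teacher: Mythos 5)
Your overall strategy -- recover $p$ by de Rham from $F=-\partial_t\bu+\Delta\bu+\bm{\beta}\phi$ tested against divergence-free test functions, then read off the regularity of $p$ term by term -- is legitimate, and for $s\in(1,3/2]$ your bookkeeping closes. Note that the paper states this proposition without proof; its only proved pressure result (the third proposition) follows \cite{RAMO} and proceeds differently: it first passes to the time primitive $\mathbf{U}(t)=\int_0^t\bu$, applies de Rham to $\bu(t)-\bu_0-\Delta\mathbf{U}(t)$ (a genuine function whose spatial regularity $\mathbf{H}^{s-3/2}(\Omega)$ is manifest), obtains a primitive pressure $P$, and only at the end sets $p=\partial_t P$, losing one time derivative. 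That ordering is not cosmetic: it is exactly what avoids the difficulty you flag.

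The gap in your version is the step ``all three summands embed into $H^{s/2-1}(0,T;H^{s-3/2}(\Omega))$'' for the term $\partial_t P\bu$. From $P\bu\in H^{s/2+1/4}(0,T;\V^{0}(\Omega))$ alone you get $\partial_t P\bu\in H^{s/2-3/4}(0,T;\V^{0}(\Omega))$, and for $s\in(3/2,2]$ this does \emph{not} embed into $H^{s/2-1}(0,T;\mathbf{H}^{s-3/2}(\Omega))$: the spatial index $s-3/2$ is then strictly positive, and surplus time regularity cannot be traded for missing space regularity. So the ``demotion'' you hope for is false as stated, and the proof is incomplete precisely on the interval $(3/2,2]$ covered by the proposition. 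The repair is to use the full anisotropic information before differentiating: since $P\bu\in L^{2}(0,T;\V^{s+1/2}(\Omega))\cap H^{s/2+1/4}(0,T;\V^{0}(\Omega))$, interpolation (in the sense of \cite{LM}) with parameter $\theta=2/(s+1/2)$ gives $P\bu\in H^{1}(0,T;\V^{s-3/2}(\Omega))$, hence $\partial_t P\bu\in L^{2}(0,T;\V^{s-3/2}(\Omega))\hookrightarrow H^{s/2-1}(0,T;\mathbf{H}^{s-3/2}(\Omega))$; the same interpolation applied to $(I-P)L_{0}(\bg,h)$ handles the quasi-stationary part. Alternatively, adopt the paper's primitive-in-time device, which sidesteps the issue altogether. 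Two smaller points: $\bm{\beta}\phi\in L^{2}(0,T;H^{s+1/2}(\Omega))$ needs more than $\bm{\beta}\in\mathbf{L}^{\infty}(\Omega)$ when $s>3/2$ (smooth or constant $\bm{\beta}$ suffices, and is what is intended), and your claimed $H^{s/2}(0,T;\V^{1/2}(\Omega))$ bound for $L_{0}(\bg,h)$ should be justified from the $H^{s/2}$-in-time hypotheses on $(\bg,h)$ together with the continuity of $L_{0}$ from $\V^{0}(\Gamma)\times(H^{1}(\Gamma))'$, which is available here since $(H^{1}(\Gamma))'\supset H^{s-1}(\Gamma)$; that part is fine once spelled out.
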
  
 We have analysed the space of the solution $(\bu,\phi)$ in the case of $\mathbf{g} \in \V^{s,s}(\Sigma)$, $h \in L^{2}(0,T;(H^{1-s}(\Gamma))') \cap H^{s}(0,T;(H^{1}(\Gamma))')$ [Corollary \ref{continuity} and Corollary \ref{continuity general}]. Now we want to see the space where pressure lies.
 \begin{proposition}
 Assume that $(\mathbf{g},h)\in \mathbf{V}^{s,s}(\Sigma)\times L^{2}(0,T;(H^{1-s}(\Gamma))')\cap\\ H^{s/2}(0,T;(H^{1}(\Gamma))')$ and $(P\bu_{0},\phi_{0}) \in (\V^{s}_{n}(\Omega)\times H^{s}(\Omega))$ with $s\in (\frac{1}{2},1) \cup (1,2]$.\\
 If $(P\bu,\phi)\in \mathbf{V}^{s+1/2,s/2+1/4}(Q)\times H^{s+1/2,s/2+1/4}(Q)$ obeys \eqref{pu01}-\eqref{pu02}, then there exists a unique pressure $p \in H^{-1/2}(0,T; \mathcal{H}^{s-1/2}(\Omega))$ such that $(\bu, p, \phi)$ satisfies \eqref{aroundzero}.
 \end{proposition}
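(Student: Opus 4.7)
The plan is to obtain the pressure by performing a Helmholtz-type decomposition of $\bu$, reducing the problem to identifying a pressure associated with a Stokes system for $P\bu$ plus a correction coming from the time-derivative of the harmonic potential carrying $(I-P)\bu$. I then track the regularity of each piece in anisotropic Sobolev scales and apply de Rham.

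To set up the splitting, let $q(t,\cdot)\in H^{1}(\Omega)/\R$ be the harmonic extension solving $\Delta q(t,\cdot)=0$ in $\Omega$ with Neumann data $\partial_{n}q(t,\cdot)=\bg(t)\cdot\mathbf{n}$ on $\Gamma$, so that $(I-P)\bu=\nabla q$. The classical Neumann regularity together with $\bg\in\V^{s,s}(\Sigma)$ gives $q\in L^{2}(0,T;H^{s+3/2}(\Omega)/\R)\cap H^{s}(0,T;H^{3/2}(\Omega)/\R)$; since $s>1/2$ the interpolation exponent $\theta=1/(2s)$ lies in $(0,1]$, and interpolating between these endpoints yields $q\in H^{1/2}(0,T;H^{s+1}(\Omega)/\R)$. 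Differentiating in time,
\begin{equation*}
\partial_{t}q\in H^{-1/2}(0,T;H^{s+1}(\Omega)/\R)\hookrightarrow H^{-1/2}(0,T;\mathcal{H}^{s-1/2}(\Omega)).
\end{equation*}
Since $q(t,\cdot)$ is harmonic, $\Delta\nabla q=\nabla\Delta q=0$, and substituting $\bu=P\bu+\nabla q$ into the first equation of \eqref{aroundzero} produces the reduced system
\begin{equation*}
\partial_{t}P\bu-\Delta P\bu+\nabla\pi=\bm{\beta}\phi,\qquad \pi:=p+\partial_{t}q,
\end{equation*}
with $\mathrm{div}\,P\bu=0$ and $P\bu\cdot\mathbf{n}|_{\Gamma}=0$. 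Identifying $p$ is therefore reduced to identifying $\pi$ and subtracting $\partial_{t}q$.

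Applying de Rham's theorem to the distributional identity $\nabla\pi=\bm{\beta}\phi-\partial_{t}P\bu+\Delta P\bu$ (tested against $\mathcal{V}(\Omega)$-functions) determines $\pi$ uniquely up to an additive time-dependent constant, with its regularity inherited from that of the right-hand side. From the hypothesis on $(P\bu,\phi)$ we have $\bm{\beta}\phi\in L^{2}(0,T;H^{s+1/2}(\Omega))$ and $\Delta P\bu\in L^{2}(0,T;H^{s-3/2}(\Omega))$, both of which embed into $H^{-1/2}(0,T;H^{s-3/2}(\Omega))$. For the time derivative, the anisotropic interpolation between $P\bu\in L^{2}(0,T;\V^{s+1/2}(\Omega))$ and $P\bu\in H^{s/2+1/4}(0,T;\V^{0}(\Omega))$ yields $P\bu\in H^{1/2}(0,T;H^{s-1/2}(\Omega))$, the balance condition $(1/2)/(s/2+1/4)+(s-1/2)/(s+1/2)=1$ being precisely usable for $s>1/2$. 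Consequently $\partial_{t}P\bu\in H^{-1/2}(0,T;H^{s-1/2}(\Omega))\hookrightarrow H^{-1/2}(0,T;H^{s-3/2}(\Omega))$. Combining these bounds we obtain $\nabla\pi\in H^{-1/2}(0,T;H^{s-3/2}(\Omega)^{n})$, and the standard gradient characterisation of Sobolev spaces modulo constants on Lipschitz domains upgrades this to $\pi\in H^{-1/2}(0,T;\mathcal{H}^{s-1/2}(\Omega))$. Setting $p:=\pi-\partial_{t}q$ then yields a pressure in $H^{-1/2}(0,T;\mathcal{H}^{s-1/2}(\Omega))$ for which $(\bu,p,\phi)$ solves \eqref{aroundzero} in the distributional sense; uniqueness is automatic since the de Rham ambiguity is exactly an additive constant, which has already been quotiented out in $\mathcal{H}^{s-1/2}(\Omega)$.

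The main obstacle I expect is the interpolation bookkeeping: one must ensure that each anisotropic embedding (in particular the one giving $\partial_{t}P\bu\in H^{-1/2}(0,T;H^{s-1/2}(\Omega))$) is valid uniformly across both intervals $s\in(1/2,1)$ and $s\in(1,2]$ and does not degenerate at the crossing $s=1$, where the definition of $\mathcal{H}^{s-1/2}$ must be tied to the functional calculus of the Stokes operator. A secondary subtlety is that the de Rham reconstruction must be carried out in the Bochner $H^{-1/2}$-in-time setting rather than pointwise in $t$; this requires a duality justification using the adjoint Stokes operator, parallel to the transposition-solution framework already developed around Definition \ref{sa}.
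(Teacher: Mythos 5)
Your argument is correct in its essentials, but it is worth noting that the paper states this proposition without any proof at all; the only pressure argument written out is for the neighbouring proposition (boundary data with the extra $H^{s/2+1/4}(0,T;\V^{-1/2}(\Gamma))$ regularity), where the author forms the primitive $\mathbf{U}(t)=\int_{0}^{t}\bu$, produces a potential $P(t)$ with $\nabla P(t)=\bu(t)-\bu_{0}-\Delta\mathbf{U}(t)$ by de Rham, and sets $p=dP/dt$, losing one time derivative. Your route is a genuine alternative that supplies exactly the quantitative step missing here: you split $\bu=P\bu+\nabla q$ with $q$ the harmonic Neumann potential of $\bg\cdot\mathbf{n}$, and you reach the $H^{-1/2}$-in-time regularity through the two space--time interpolations $L^{2}(0,T;H^{s+3/2})\cap H^{s}(0,T;H^{3/2})\hookrightarrow H^{1/2}(0,T;H^{s+1})$ for $q$ and $L^{2}(0,T;\V^{s+1/2}(\Omega))\cap H^{s/2+1/4}(0,T;\V^{0}(\Omega))\hookrightarrow H^{1/2}(0,T;\mathbf{H}^{s-1/2}(\Omega))$ for $P\bu$. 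I checked both exponent computations ($\theta=1/(2s)$ and $\theta=2/(2s+1)$ respectively) and they are correct and nondegenerate uniformly on $(\frac12,1)\cup(1,2]$. What your approach buys over the paper's primitive trick is an explicit accounting of the $(I-P)\bu$ contribution (the term $\partial_{t}q$ absorbed into the pressure), which the primitive argument hides inside $\bu(t)-\bu_{0}$; the cost is that you must make sense of $\partial_{t}P\bu$ at negative time regularity rather than differentiating only at the very last step.

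Two soft spots remain. First, the identity $\nabla\pi=\bm{\beta}\phi-\partial_{t}P\bu+\Delta P\bu$ against divergence-free, compactly supported test fields is not automatic from \eqref{pu01}: one has to unwind the extended operator $\wt{\mc{A}}$ and the lifting $L_{0}$, using the stationary system satisfied by $L_{0}(\bg,h)=(\mathbf{w},\pi_{w},\psi)$ to check that the source term $(-\wt{\mc{A}})\wt{P}L_{0}(\bg,h)$ contributes only a gradient when paired with such test fields. You flag this but do not carry it out, and it is the one place where the duality machinery of Definition \ref{sa} is genuinely needed rather than decorative. Second, for $s\in(3/2,2]$ the inclusion $\bm{\beta}\phi\in H^{-1/2}(0,T;H^{s-3/2}(\Omega))$ requires $\bm{\beta}$ to be a multiplier on the positive-order space $H^{s-3/2}(\Omega)$, not merely $\mathbf{L}^{\infty}(\Omega)$; since the paper is silent on this, it is a shared blemish rather than an error of yours, but you should state the needed hypothesis on $\bm{\beta}$ explicitly.
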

 Now we want to see the regularity of pressure when we provide a better time regularity to the boundary conditions.
  \begin{proposition}
  Assume that $(P\bu_{0},\phi_{0}) \in \V^{s-1/2}_{n}(\Omega)\times H^{s-1/2}(\Omega)$,\\$(\mathbf{g},h)\in L^{2}(0,T; \mathbf{V}^{s}(\Gamma)) \cap H^{\frac{s}{2}+\frac{1}{4}}(0,T; V^{-\frac{1}{2}}(\Gamma))\times L^{2}(0,T;(H^{1-s}(\Gamma))')\cap\\ H^{s/2}(0,T;(H^{\frac{3}{2}}(\Gamma))')$,  with $s\in (1/2,1)$,\\ $(\mathbf{g},h)\in L^{2}(0,T; \mathbf{V}^{s}(\Gamma)) \cap H^{\frac{s}{2}+\frac{1}{4}}(0,T; V^{-\frac{1}{2}}(\Gamma))\times L^{2}(0,T;(H^{s-1}(\Gamma)))\cap\\ H^{s/2}(0,T;(H^{\frac{3}{2}}(\Gamma))')$,  with $s\in (1,2]$ under the compatibility condition \eqref{compatibility}.\\
 If $(P\bu,\phi)\in \mathbf{V}^{s+1/2,s/2+1/4}(Q)\times H^{s+1/2,s/2+1/4}(Q)$ obeys \eqref{pu01}-\eqref{pu02}, then there exists a unique pressure $p \in H^{s/2-3/4}(0,T; \mathcal{H}^{s-1/2}(\Omega))$ such that $(\bu, p, \phi)$ satisfies \eqref{aroundzero}.
 \end{proposition}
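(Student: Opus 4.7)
The plan is to recover the pressure by combining the Helmholtz decomposition $\bu = P\bu + (I-P)\bu$ with a de Rham argument applied to the projected momentum equation, in the same spirit as the two preceding pressure propositions but tracking the extra $1/4$ time derivative afforded by the improved boundary regularity. Recall that $(I-P)\bu = \nabla q$, where $q(t,\cdot)$ is the unique (modulo a constant) solution of $\Delta q(t)=0$ in $\Omega$ with $\partial q / \partial \mathbf{n}(t) = \bg(t)\cdot\mathbf{n}$ on $\Gamma$. Substituting into the first equation of \eqref{aroundzero} and using $\Delta \nabla q = \nabla \Delta q = 0$, I obtain
\begin{equation*}
\partial_{t} P\bu - \Delta P\bu + \nabla \pi = \bm{\beta}\phi \mbox{ in } Q, \qquad \pi := p + \partial_{t} q.
\end{equation*}

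The first step is to identify the regularity of $\pi$ by rewriting this as $\nabla \pi = \bm{\beta}\phi + \Delta P\bu - \partial_{t} P\bu$ in $\mathcal{D}'(Q)$. Under the hypothesis $(P\bu,\phi)\in \mathbf{V}^{s+1/2,s/2+1/4}(Q)\times H^{s+1/2,s/2+1/4}(Q)$, each summand on the right-hand side lies in an anisotropic Sobolev space whose common spatial order is $s-3/2$ and whose common temporal order is $s/2-3/4$; the term $\partial_{t} P\bu$ is the binding one, since differentiation in time costs exactly one derivative against the $H^{s/2+1/4}$-in-time bound on $P\bu$. Applying de Rham's theorem parametrically in $t$ inside the quotient space $\mathcal{H}^{s-1/2}(\Omega)$ (together with the isomorphism induced by the gradient modulo constants) then produces a unique $\pi \in H^{s/2-3/4}(0,T; \mathcal{H}^{s-1/2}(\Omega))$ whose gradient matches the right-hand side.

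The second step is to control $\partial_{t} q$ in the same target space. The hypothesis $\bg \in H^{s/2+1/4}(0,T; \mathbf{V}^{-1/2}(\Gamma))$ transfers to the normal trace as $\bg\cdot\mathbf{n} \in H^{s/2+1/4}(0,T; H^{-1/2}(\Gamma))$, and combined with the spatial bound $\bg \in L^{2}(0,T; \mathbf{V}^{s}(\Gamma))$, standard elliptic Neumann theory yields $q \in L^{2}(0,T; \mathcal{H}^{s+1/2}(\Omega)) \cap H^{s/2+1/4}(0,T; \mathcal{H}^{1/2}(\Omega))$. Differentiating once in $t$ and interpolating then places $\partial_{t} q \in H^{s/2-3/4}(0,T; \mathcal{H}^{s-1/2}(\Omega))$, so that $p = \pi - \partial_{t} q$ belongs to the desired space. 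Uniqueness follows from the identity $\nabla p = \nabla \pi - \partial_{t} \nabla q$ and the injectivity of $\nabla$ modulo constants on $\mathcal{H}^{s-1/2}(\Omega)$.

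The main obstacle will be the anisotropic interpolation bookkeeping needed to place each of $\bm{\beta}\phi$, $\Delta P\bu$, and $\partial_{t} P\bu$ simultaneously in $H^{s/2-3/4}(0,T; \mathcal{H}^{s-3/2}(\Omega))$ and to do so uniformly across the two ranges $s\in(1/2,1)$ and $s\in(1,2]$, where the distinct assumptions on $h$ (namely $(H^{1-s}(\Gamma))'$ versus $H^{s-1}(\Gamma)$) force different regularities for $\phi$ and hence for $\bm{\beta}\phi$. A secondary difficulty is ensuring that the spatial regularity of $\partial_{t} q$ indeed reaches $\mathcal{H}^{s-1/2}(\Omega)$ in the regime $s>1$, which requires using the spatial bound $\bg \in L^{2}(0,T; \mathbf{V}^{s}(\Gamma))$ in tandem with the temporal bound via an interpolation argument rather than a direct elliptic estimate.
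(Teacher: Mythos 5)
Your route is genuinely different from the paper's proof of this proposition. The paper never isolates the harmonic part of the velocity: it works with the full $\bu$, forms the time primitive $\mathbf{U}(t)=\int_0^t\bu$, applies the de Rham argument to the time-integrated identity $\nabla P(t)=\bu(t)-\bu_0-\Delta\mathbf{U}(t)-\cdots$, which sits at \emph{nonnegative} temporal order (using $\bu\in C([0,T];\V^{s-1/2}(\Omega))\cap H^{s/2+1/4}(0,T;\V^{0}(\Omega))$ and $\mathbf{U}\in H^{1}(0,T;\V^{s+1/2}(\Omega))$), and only at the very end sets $p=dP/dt$, paying the one time derivative there; the extra hypothesis $\bg\in H^{s/2+1/4}(0,T;\V^{-1/2}(\Gamma))$ enters only implicitly, to ensure that the full $\bu$ (not just $P\bu$) has the $\V^{s+1/2,s/2+1/4}(Q)$ regularity fed into that computation. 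You instead recycle the decomposition $(I-P)\bu=\nabla q$ from the proof of the $\epsilon=0$ theorem (equations \eqref{pupde}--\eqref{finalpupde}), absorb $\partial_t q$ into a modified pressure $\pi=p+\partial_t q$, invert the gradient on the projected equation, and then estimate $\partial_t q$ separately from the Neumann problem, which is where the hypothesis on $\bg$ appears explicitly. Your version makes visible \emph{why} the proposition asks for $H^{s/2+1/4}(0,T;\V^{-1/2}(\Gamma))$ regularity of $\bg$; the paper's primitive-in-time trick buys the fact that de Rham is applied to an object with positive time regularity, whereas in your argument the temporal order $s/2-3/4$ is negative for $s<3/2$, so "de Rham parametrically in $t$" must really mean applying the fixed bounded inverse of $\nabla$ (from its closed range in $\mathbf{H}^{s-3/2}(\Omega)$ onto $\mathcal{H}^{s-1/2}(\Omega)$) to a vector-valued distribution in time; your parenthetical about "the isomorphism induced by the gradient" is exactly the step that must be spelled out, or replaced by the paper's integration trick.

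Two pieces of bookkeeping need repair. First, in your second step the Neumann estimate is understated: data $\bg\cdot\mathbf{n}\in H^{s/2+1/4}(0,T;H^{-1/2}(\Gamma))$ gives $q\in H^{s/2+1/4}(0,T;\mathcal{H}^{1}(\Omega))$, not merely $\mathcal{H}^{1/2}(\Omega)$; with your weaker exponent the interpolation you invoke reaches $\partial_t q\in H^{s/2-3/4}(0,T;\mathcal{H}^{s-1/2}(\Omega))$ only for $s\le 1$ and thus misses the range $s\in(1,2]$, whereas with $\mathcal{H}^{1}(\Omega)$ no interpolation is needed for $s\le 3/2$ since $\mathcal{H}^{1}(\Omega)\hookrightarrow\mathcal{H}^{s-1/2}(\Omega)$. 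Second, the claim that $\partial_t P\bu$ has spatial order $s-3/2$ at temporal order $s/2-3/4$ does not follow from $P\bu\in\V^{s+1/2,s/2+1/4}(Q)$ by interpolation once $s>3/2$ (one only gets $P\bu\in H^{1}(0,T;\mathbf{H}^{s-3/2}(\Omega))$ there); the paper's own proof is equally delicate on that subrange, so this is not a defect specific to your approach, but it should not be presented as routine. Finally, the worry in your closing paragraph about $\bm{\beta}\phi$ is unnecessary: under either hypothesis on $h$ it is the most regular term on the right-hand side and plays no role in determining the pressure space.
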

 \begin{proof}
 We have $(\bu,\phi) \in \mathbf{V}^{s+1/2,s/2+1/4}(Q)\times H^{s+1/2,s/2+1/4}(Q)$ as in \cite[Lemma 3.2]{RAMO}. We want to determine the correct space for the pressure. Define: 
 \begin{align*}
 \mathbf{U}(\cdot)=\int\limits_{0}^{\cdot} u(t)\, dt \in H^{1}(0,T; \V^{s+\frac{1}{2}}(\Omega)).
 \end{align*} Also we have 
 \begin{align*}
 \mathbf{u}\in L^{2}(0,T; \V^{s+\frac{1}{2}}(\Omega)) \cap H^{\frac{s}{2}+\frac{1}{4}}(0,T; \V^{0}(\Omega)) \hookrightarrow C([0,T]; \V^{s-\frac{1}{2}}(\Omega)).
 \end{align*}We can say that, for all $\mathbf{\phi} \in \V^{0}(\Omega)$, we have
 \begin{align*}
 \left\langle \bu(t)-\bu_{0}-\Delta \mathbf{U}(t), \mathbf{\phi}\right\rangle_{\mathbf{L}^{2}(\Omega)}=0,
 \end{align*} there exists $P(t)$ such that 
 \begin{align*}
 \nabla P(t)=\bu(t)-\bu_{0}-\Delta \mathbf{U}(t) \in \mathbf{V}^{s-\frac{3}{2}}(\Omega).
 \end{align*}Also, we obtain $\nabla P \in H^{\frac{s}{2}+\frac{1}{4}}(0,T; \mathbf{V}^{s-\frac{3}{2}}(\Omega))$. We can define, the pressure function $p=\frac{dP}{dt}$ and we have $p \in H^{\frac{s}{2}-\frac{3}{4}}(0,T; \mathcal{H}^{s-1/2}(\Omega))$.
 \end{proof}
 \begin{remark}
 In \cite{BADRA}, the author assumes that the solution is in the space\\ $L^{2}(0,T;\mathbf{V}^{1}(\Omega))\cap H^{1}(0,T; \mathbf{V}^{-1}(\Omega))$ and also the boundary data has good time regularity. So its not possible to compare our result with \cite[theorem 4.9]{BADRA}.
 \end{remark}
% The following proposition that is inspired by \cite[theorem 4.9]{BADRA} gives a precise statement of the equivalence between formulations \eqref{aroundzero} and \eqref{pu01}-\eqref{pu02}:
% \begin{proposition}
% Assume that $(\mathbf{g},h)\in \mathbf{V}^{s,s/2}(\Sigma)\times L^{2}(0,T;H^{s-1}(\Gamma))\cap H^{s/2}(0,T;(H^{1}(\Gamma))')$ with $s\in [0,1)\cup (1,2]$.\\
% $1.$ If $(\bu,\phi)\in \mathbf{V}^{s+1/2,s/2+1/4}(Q)\times H^{s+1/2,s/2+1/4}(Q)$ obeys \eqref{pu01}-\eqref{pu02}, then there exists a unique pressure $p \in H^{s/2-3/4}(0,T; \mathcal{H}^{s-1/2}(\Omega))$ such that $(\bu, p, \phi)$ satisfies \eqref{aroundzero}.\\
% $2.$ Conversely, if $(\bu, p, \phi)\in \mathbf{V}^{s+1/2,s/2+1/4}(Q)\times H^{s/2-3/4}(0,T; \mathcal{H}^{s-1/2}(\Omega)) \times H^{s+1/2,s/2+1/4}(Q)$, then $(P\bu,\phi)$ satisfies \eqref{pu01}-\eqref{pu02}.
% \end{proposition}
 \subsubsection{Linearization around nonzero stationary state}
 In the previous section, we consider linearization around zero solution. Here we are going to extend the results of previous section to the following linerized system around a stationary state $(\bz, \theta) \in \V^{1}(\Omega) \times H^{1}(\Omega)$ :
   \begin{equation}\label{nonhomogeneoslinearized}
\left. \begin{aligned}
 & \frac{\partial \mathbf{u}}{\partial t} - \Delta \mathbf{u} + (\mathbf{z}.\nabla)\mathbf{u} + (\mathbf{u}.\nabla)\bz + \nabla p= \bm {\beta}\phi  \mbox{ in } Q ;\\ & \mbox{div }\mathbf{u}=0 \mbox{ in } Q;\\ &\mathbf{u}=\mathbf{g} \mbox{ on } \Sigma,\quad \mathbf{u}(0)=\mathbf{u}_{0}(x) \mbox{ in } \Omega; \\& \frac{\partial \phi}{\partial t} - \Delta\phi +(\bz.\nabla)\phi + (\mathbf{u}.\nabla)\theta  = 0 \mbox{ in }\mbox { Q };  \\& \frac{\partial\phi}{\partial n} = h \mbox{ on } \Sigma ,\quad \phi(0)=\phi_{0} \mbox{ in } \Omega.
 \end{aligned}
\right\}
\end{equation}  
  To analyze equation \eqref{nonhomogeneoslinearized}, we will introduce following operators :
  \begin{align*}
  & A_{1}:D(A_{1})\rightarrow \mathbf{V}_{n}^{0}(\Omega); \quad A_{1}\mathbf{u}=P \left(\Delta \mathbf{u}-(\mathbf{u.}\nabla)\mathbf{z}-(\mathbf{z.}\nabla)\mathbf{u} \right),\\
 & A_{2}: L^{2}(\Omega)\rightarrow \mathbf{V}_{n}^{0}(\Omega); \quad A_{2}\phi= P (\bm{\beta}\phi),\\
 & A_{3}:\mathbf{L}^{2}(\Omega)\rightarrow L^{2}(\Omega); \quad  A_{3}\mathbf{u}=-\mathbf{u.}\nabla \theta\\
 & A_{4}: D(A_{4})\rightarrow L^{2}(\Omega); \quad   A_{4}\phi= \Delta \phi -\mathbf{z.}\nabla \phi
  \end{align*}
  where,
  \begin{align*}
  D(A_{1})=\mathbf{V}^{2}(\Omega) \cap \mathbf{V}_{0}^{1}(\Omega) \quad \mbox{and}\quad D(A_{4})=\{\tau \in H^{2}(\Omega)|\,\,\frac{\partial \tau}{\partial n}=0 \}.
  \end{align*}
  Let us define an operator $\mathcal{A}_{\bz,\theta}=
\begin{pmatrix}
A_{1} & A_{2}\\
A_{3} & A_{4}
\end{pmatrix}: D(\mathcal{A}_{\bz,\theta})\mapsto \mathbf{V}_{n}^{0}(\Omega)\times L^{2}(\Omega)$ with $D(\mathcal{A}_{\bz,\theta})=D(A_{1})\times D(A_{4})$. \\

In this section, we will denote by $\lambda_{0} > 0$ an element in the resolvent set of $\mathcal{A}_{\bz,\theta}$ satisfying:
\begin{align}\label{analyticcondition}
\langle (\lambda_{0}I-\mathcal{A}_{\bz,\theta})(\mathbf{\Phi},\psi),(\mathbf{\Phi},\psi)) \rangle_{\mathbf{V}_{n}^{0}(\Omega) \times L^{2}(\Omega)} \geq \omega_{0}|(\mathbf{\Phi},\psi))|_{\mathbf{V}_{0}^{1}(\Omega) \times H_{0}^{1}(\Omega)},
\end{align} 
for some $\omega_{0} > 0$.
\begin{remark}
We can use the regularity of $(\bz, \theta)$ to establish the relation \eqref{analyticcondition}. Use \eqref{analyticcondition} to conclude that $(\lambda_{0}I-\mathcal{A}_{\bz,\theta},D(\mathcal{A}_{\bz,\theta}))$ generates an analytic semigroup on $\mathbf{V}_{n}^{0}(\Omega) \times L^{2}(\Omega)$.\\
Here also we can extend the operator $\mc{A}_{\bz,\theta}$ to an unbounded operator $\wt{\mc{A}_{\bz,\theta}}$ with domain $D(\wt{A}_{\bz,\theta})=\V_{n}^{0}(\Omega)\times L^{2}(\Omega)$ in $(D(\mc{A}_{\bz,\theta}^{*}))'$.
\end{remark}
We can define weak solution as in the case of linearization around zero solution :
\begin{definition}\label{weak solutionz}
Assume that $(\mathbf{g},h)\in L^{2}(0,T;\mathbf{V}^{0}(\Gamma)) \times L^{2}(0,T;(H^{1}(\Gamma))')$ and $(P\mathbf{u}_{0},\phi_{0})\in \mathbf{V}_{n}^{0}(\Omega)\times L^{2}(\Omega)$.  A function $(\mathbf{u},\phi) \in L^2(0,T;V^{0}(\Omega)\times L^{2}(\Omega))$ is a \textbf{weak solution} to equation \eqref{nonhomogeneoslinearized} if $ \wt{P}\mathbf{u_{\phi}}$ is a weak solution of the following evolution system :
\begin{align}\label{puz}
\wt{P}\mathbf{u_{\phi}}'&= \wt{\mc{A}_{\bz,\theta}}\wt{P}\mathbf{u_{\phi}}+(\lambda_{0}I-\wt{\mc{A}_{\bz,\theta}})\wt{P}L_{\bz}(\mathbf{g},h)
,\mbox{   with   } \wt{P}\mathbf{u_{\phi}}(0)=\wt{P}\mathbf{u_{\phi}}^{0},\\ \mbox{ and   }
 (I-\wt{P})\mathbf{u_{\phi}}(.)&=(I-\wt{P})L_{0}(\mathbf{g}(.),h(.)).  \notag
\end{align}
\end{definition}
  \begin{theorem}
  Assume that  $(\bz,\theta) \in \V^{1}(\Omega)\times H^{1}(\Omega)$. For all $(P\mathbf{u}_{0},\phi_{0})\in \mathbf{V}_{n}^{0}(\Omega)\times L^{2}(\Omega)$ and $(\mathbf{g},h)\in L^{2}(0,T;\mathbf{V}^{0}(\Gamma))\times L^{2}(0,T;(H^{1}(\Gamma))')$, equation \eqref{nonhomogeneoslinearized} admits a unique solution $(P\mathbf{u},\phi)\in \mathbf{V}^{1/2-\epsilon,1/4-\epsilon/2}\times H^{1/2-\epsilon,1/4-\epsilon/2}.$
  \end{theorem}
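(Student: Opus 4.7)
The plan is to run the argument of part $(i)$ of Theorem \ref{regularity1} in the perturbed setting, replacing the semigroup $e^{t\mathcal{A}}$ by the analytic semigroup generated by $\mathcal{A}_{\bz,\theta}-\lambda_{0}I$ and the lifting $L_{0}$ by $L_{\bz}$. First I would write the Duhamel formula associated with the evolution equation \eqref{puz}: using that $(\lambda_{0}I-\mathcal{A}_{\bz,\theta}, D(\mathcal{A}_{\bz,\theta}))$ generates an analytic semigroup on $\mathbf{V}_{n}^{0}(\Omega)\times L^{2}(\Omega)$ (by the remark following \eqref{analyticcondition}), the mild solution is
\begin{equation*}
\wt{P}\mathbf{u}_{\phi}(t)=e^{t(\mathcal{A}_{\bz,\theta}-\lambda_{0}I)}\wt{P}\mathbf{u}_{\phi}(0)
+\int_{0}^{t}e^{(t-s)(\mathcal{A}_{\bz,\theta}-\lambda_{0}I)}(\lambda_{0}I-\wt{\mathcal{A}_{\bz,\theta}})\wt{P}L_{\bz}(\mathbf{g},h)(s)\,ds.
\end{equation*}
The $(I-\wt{P})$ component is simply read off from the quasi-stationary relation $(I-\wt{P})\mathbf{u}_{\phi}=(I-\wt{P})L_{0}(\mathbf{g},h)$, which, by continuity of $L_{0}$ from $\V^{0}(\Gamma)\times (H^{1}(\Gamma))'$ into $\V^{1/2}(\Omega)\times H^{1/2}(\Omega)$, lies in $L^{2}(0,T;\V^{1/2}(\Omega)\times H^{1/2}(\Omega))$.

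Next I would identify domains of fractional powers. Since $\mathcal{A}_{\bz,\theta}$ is a lower-order perturbation of the operator $\mathcal{A}=\mathrm{diag}(P\Delta,\Delta)$ (with zero-order coupling via $A_{2},A_{3}$ and first-order convective terms involving $\bz,\theta\in\V^{1}(\Omega)\times H^{1}(\Omega)$), standard perturbation theory for analytic semigroups yields the same interpolation identification
\begin{equation*}
D\bigl((\lambda_{0}I-\mathcal{A}_{\bz,\theta})^{1/4-\epsilon/4}\bigr)\simeq \V^{1/2-\epsilon/2}(\Omega)\times H^{1/2-\epsilon}(\Omega),
\end{equation*}
as in \cite[Chap.\ II.1.6, Prop.\ 6.1]{PRATO}. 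With this at hand, the operator $(\lambda_{0}I-\wt{\mathcal{A}_{\bz,\theta}})\wt{P}L_{\bz}$ is bounded from $\V^{0}(\Gamma)\times (H^{1}(\Gamma))'$ into $(D((\lambda_{0}I-\mathcal{A}_{\bz,\theta}^{*})^{3/4+\epsilon/4}))'$, thanks to Theorem \ref{liftboussinesq} applied with $s=-1$ (mapping into $\V^{1/2}(\Omega)\times H^{1/2}(\Omega)$) and duality.

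The core estimate is then the same convolution bound used in the proof of Theorem \ref{regularity1}: applying $(\lambda_{0}I-\mathcal{A}_{\bz,\theta})^{1/4-\epsilon/2}$ to the Duhamel formula, using $\|(\lambda_{0}I-\mathcal{A}_{\bz,\theta})^{\alpha}e^{t(\mathcal{A}_{\bz,\theta}-\lambda_{0}I)}\|\leq Ct^{-\alpha}$ and Young/Cauchy--Schwarz in the convolution, gives
\begin{equation*}
\|\wt{P}\mathbf{u}_{\phi}\|_{L^{2}(0,T;\V^{1/2-\epsilon}(\Omega)\times H^{1/2-\epsilon}(\Omega))}
\leq C\Bigl(\|\wt{P}\mathbf{u}_{\phi}(0)\|_{\V_{n}^{0}\times L^{2}}+\|(\mathbf{g},h)\|_{L^{2}(0,T;\V^{0}(\Gamma)\times (H^{1}(\Gamma))')}\Bigr),
\end{equation*}
and a parallel argument on the time derivative yields the $H^{1/4-\epsilon/2}(0,T;\V^{0}\times L^{2})$ bound via interpolation \cite[Chap.\ 1, Thm.\ 9.7]{LM}. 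Uniqueness follows from linearity: a solution corresponding to $(\mathbf{u}_{0},\phi_{0},\mathbf{g},h)=(0,0,0,0)$ has vanishing $(I-\wt{P})$ part and satisfies a homogeneous evolution equation on $\V_{n}^{0}\times L^{2}$, hence is identically zero by semigroup theory.

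The main obstacle I anticipate is the fractional-power identification for $\mathcal{A}_{\bz,\theta}$: the operator is neither self-adjoint nor diagonal, so the isomorphism $D((\lambda_{0}I-\mathcal{A}_{\bz,\theta})^{\alpha})\simeq [\V_{n}^{0}\times L^{2}, D(\mathcal{A}_{\bz,\theta})]_{\alpha}$ must be combined with a proof that the interpolation space is unchanged under the lower-order perturbation. This is where the strict coercivity condition \eqref{analyticcondition} and the regularity $(\bz,\theta)\in\V^{1}(\Omega)\times H^{1}(\Omega)$ enter crucially, since they make the convective and coupling terms in $\mathcal{A}_{\bz,\theta}-\mathcal{A}$ relatively bounded with relative bound zero, preserving the fractional domain scales needed above.
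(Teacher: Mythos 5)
Your proposal takes essentially the same route as the paper, whose entire proof of this theorem is the one-line instruction to repeat the argument of Theorem \ref{regularity1}(i) with $\mathcal{A}$, $e^{t\mathcal{A}}$, $L_{0}$ replaced by $\mathcal{A}_{\bz,\theta}$, $e^{t\mathcal{A}_{\bz,\theta}}$, $L_{\bz}$; your fleshed-out version (Duhamel formula, fractional-power domain identification via perturbation, convolution estimate, interpolation for the time regularity, uniqueness by linearity) is exactly that argument, with more detail on the fractional-power issue than the paper itself supplies. The only slip is that the mild solution of \eqref{puz} as written should involve $e^{t\mathcal{A}_{\bz,\theta}}$ (equivalently, perform the change of unknown $e^{-\lambda_{0}t}\wt{P}\mathbf{u}_{\phi}$ first) rather than $e^{t(\mathcal{A}_{\bz,\theta}-\lambda_{0}I)}$, but since the two semigroups differ only by the bounded factor $e^{\lambda_{0}t}$ on $[0,T]$, none of your estimates or conclusions are affected.
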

  \begin{proof}
  To prove this we will follow the same steps as in theorem \ref{regularity1} by replacing $\mc{A}, e^{t\mc{A}}, L_{0}$ by $\mc{A}_{\bz,\theta}, e^{t\mc{A}_{\bz,\theta}}, L_{\bz}$.
  \end{proof}
 \subsection{Linearization around an instationary state}
 In this section, our aim is to study the linearized Boussinesq system around an instationary state $(\bz,\theta)$, with homogeneous boundary conditions :
  \begin{equation}\label{homogeneoslinearized}
\left. \begin{aligned}
 & \frac{\partial \mathbf{y}}{\partial t} - \Delta \mathbf{y} + (\mathbf{z}.\nabla)\mathbf{y} + (\mathbf{y}.\nabla)\bz + \nabla p= \bm {\beta}\tau + \mathbf{f}_{1} \mbox{ in } Q ;\\ & \mbox{div }\mathbf{y}=0 \mbox{ in } Q;\\ &\mathbf{y}=\mathbf{0} \mbox{ on } \Sigma, \quad \mathbf{y}(0)=\mathbf{y}_{0}(x) \mbox{ in } \Omega \\& \frac{\partial \tau}{\partial t} - \Delta\tau +(\bz.\nabla)\tau + (\mathbf{y}.\nabla)\theta  = f_{2} \mbox{ in }\mbox { Q }  \\& \frac{\partial\tau}{\partial n} = 0 \mbox{ on } \Sigma ,\quad \tau(0)=\tau_{0} \mbox{ in } \Omega.
 \end{aligned}
\right\}
\end{equation}
Here we assume that $(\bz,\theta)\in L^{2}(0,T;\V^{1}(\Omega)) \cap L^{\infty}(0,T;\mathbf{L}^{4}(\Omega))\times L^{2}(0,T;H^{1}(\Omega)) \cap L^{\infty}(0,T; L^{4}(\Omega))$ and $(\mathbf{f}_{1},f_{2})\in L^{2}(0,T;\mathbf{H}^{-1}(\Omega))\times L^{2}(0,T; H^{-1}(\Omega))$.

We define the operator $\mc{A}_{\bz,\theta}(t)\in \mathcal{L}(\V^{1}_{0}(\Omega)\times H^{1}(\Omega),\V^{-1}(\Omega)\times (H^{1}(\Omega))' )$ by :
\begin{align}\label{bilinearform}
\langle \mc{A}_{\bz,\theta}(t) \begin{pmatrix}
\mathbf{y}\\ \tau
\end{pmatrix},
\begin{pmatrix}
\PHI \\ \psi
\end{pmatrix}
\rangle=
&- \int_{\Omega}\nabla \mathbf{y}.\nabla \mathbf{\Phi}-\int_{\Omega}((\mathbf{z}.\nabla)\mathbf{y}).\mathbf{\Phi}-\int_{\Omega}((\mathbf{y}.\nabla).\bz).\mathbf{\Phi} +\int_{\Omega} \mathbf{\beta}\tau.\mathbf{\Phi} \notag\\&- \int_{\Omega}\nabla \tau.\nabla \psi - \int_{\Omega}(\mathbf{z}.\nabla \tau)\psi - \int_{\Omega} (\mathbf{y}.\nabla \theta)\psi
\end{align}
for all $(\PHI,\psi)\in \V^{1}_{0}(\Omega)\times H^{1}(\Omega)$. We can rewrite equation \eqref{homogeneoslinearized}  in the form :
\begin{align*}
\begin{pmatrix}
\mathbf{y} \\ \tau 
\end{pmatrix}'= \mc{A}_{\bz,\theta}(t)
\begin{pmatrix}
\mathbf{y} \\ \tau 
\end{pmatrix} + \tilde{P}
\begin{pmatrix}
 \mathbf{f}_{1} \\ f_{2}
 \end{pmatrix} \mbox{ with }
 \begin{pmatrix}
 \by(0) \\ \tau(0)
 \end{pmatrix}=
\begin{pmatrix}
 \by_{0} \\ \tau_{0}
 \end{pmatrix} . 
\end{align*}
\begin{lemma}\label{coercive}
Assume that $(\bz,\theta)\in L^{2}(0,T;\V^{1}(\Omega)) \cap L^{\infty}(0,T;\mathbf{L}^{4}(\Omega))\times L^{2}(0,T;H^{1}(\Omega)) \cap L^{\infty}(0,T; L^{4}(\Omega))$. Then there exist $\lambda_{0}>0$ and $M > 0$ such that :
\begin{align}\label{operatorcontinuity}
&(i) \quad |\langle \mc{A}_{\bz,\theta}(t) \begin{pmatrix}
\mathbf{y}\\ \tau
\end{pmatrix},
\begin{pmatrix}
\PHI \\ \psi
\end{pmatrix}
\rangle| \leq M || \begin{pmatrix}
\mathbf{y}\\ \tau
\end{pmatrix}||_{\V_{0}^{1}(\Omega)\times H^{1}(\Omega)}
|| \begin{pmatrix}
\mathbf{\Phi}\\ \psi
\end{pmatrix}||_{\V_{0}^{1}(\Omega)\times H^{1}(\Omega)}
\end{align} and 
\begin{align}\label{operatorcoercive}
&(ii) \quad \langle \lambda_{0}\begin{pmatrix}
\mathbf{y}\\ \tau
\end{pmatrix} - \mc{A}_{\bz,\theta}(t) \begin{pmatrix}
\mathbf{y}\\ \tau
\end{pmatrix},
\begin{pmatrix}
\by \\ \tau
\end{pmatrix}
\rangle \geq \frac{1}{2}\Big(||\by||_{\V^{1}_{0}(\Omega)}^{2}+ ||\tau||^{2}_{H^{1}(\Omega)}\Big).
\end{align}
where $\langle , \rangle = \langle , \rangle_{\V^{-1}(\Omega) \times (H^{1}(\Omega))', \V_{0}^{1}(\Omega)\times H^{1}(\Omega)}$.
\end{lemma}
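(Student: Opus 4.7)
The approach is to carry out, essentially pointwise in $t$, the same type of $L^{4}$-based nonlinear estimates that were used in the proof of \cref{existenceboussi} for the stationary linearised system, but now with the bilinear form \eqref{bilinearform} against a \emph{time-dependent} background $(\bz(t),\theta(t))$ that only sits in $L^{\infty}(0,T;\mathbf{L}^{4}(\Omega))\times L^{\infty}(0,T;L^{4}(\Omega))$ at the level of integrability needed for the estimates.

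For the continuity bound (i), my plan is to estimate each of the seven terms of \eqref{bilinearform} separately. The two principal dissipative terms $\int \nabla\by\!\cdot\!\nabla\PHI$ and $\int\nabla\tau\!\cdot\!\nabla\psi$ are handled directly by Cauchy--Schwarz. The buoyancy term $\int\bm{\beta}\tau\!\cdot\!\PHI$ is bounded by $\|\bm{\beta}\|_{\infty}\|\tau\|_{L^{2}}\|\PHI\|_{L^{2}}$. For the convective terms involving $\bz$, namely $\int((\bz\!\cdot\!\nabla)\by)\!\cdot\!\PHI$ and $\int(\bz\!\cdot\!\nabla\tau)\psi$, I use H\"older with the splitting $L^{4}\cdot L^{2}\cdot L^{4}$ together with the embedding $H^{1}(\Omega)\hookrightarrow L^{4}(\Omega)$ (valid for $N\le 3$) and the $L^{\infty}_{t}L^{4}_{x}$ bound on $\bz$. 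For the two genuinely awkward terms $\int((\by\!\cdot\!\nabla)\bz)\!\cdot\!\PHI$ and $\int(\by\!\cdot\!\nabla\theta)\psi$, I first integrate by parts, using $\mathrm{div}\,\by=0$ and $\by|_{\Gamma}=0$, to transfer the derivative off the rough factor, obtaining $-\int((\by\!\cdot\!\nabla)\PHI)\!\cdot\!\bz$ and $-\int(\by\!\cdot\!\nabla\psi)\theta$, and then apply the same $L^{4}\cdot L^{2}\cdot L^{4}$ H\"older bound. This produces $M$ depending on $\|\bm{\beta}\|_{\infty}$, $\|\bz\|_{L^{\infty}(0,T;\mathbf{L}^{4})}$ and $\|\theta\|_{L^{\infty}(0,T;L^{4})}$, as required.

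For the G\r{a}rding-type inequality (ii), I plug $(\PHI,\psi)=(\by,\tau)$ into \eqref{bilinearform}. The two terms $\int((\bz\!\cdot\!\nabla)\by)\!\cdot\!\by$ and $\int(\bz\!\cdot\!\nabla\tau)\tau$ vanish because $\mathrm{div}\,\bz=0$ and $\by,\tau$ have the right trace (integration by parts gives the same integral with opposite sign). The buoyancy term is absorbed by $\tfrac{\|\bm{\beta}\|_{\infty}}{2}(\|\by\|_{L^{2}}^{2}+\|\tau\|_{L^{2}}^{2})$. The remaining two nonlinear terms, after one integration by parts, are estimated by
\begin{align*}
\left|\int((\by\!\cdot\!\nabla)\by)\!\cdot\!\bz\right| &\le \|\by\|_{L^{4}}\|\nabla\by\|_{L^{2}}\|\bz\|_{L^{4}},\\
\left|\int(\by\!\cdot\!\nabla\tau)\theta\right| &\le \|\by\|_{L^{4}}\|\nabla\tau\|_{L^{2}}\|\theta\|_{L^{4}},
\end{align*}
after which I apply the Gagliardo--Nirenberg interpolation $\|\by\|_{L^{4}}\le C\|\by\|_{L^{2}}^{1/4}\|\nabla\by\|_{L^{2}}^{3/4}$ (the 3D version, which also covers 2D up to a better exponent) and Young's inequality, exactly as in the $7/4$--$1/4$ split used in \cref{existenceboussi}. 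This gives terms of the form $\tfrac14\|\nabla\by\|_{L^{2}}^{2}$ plus $C\|\bz\|_{L^{4}}^{8}\|\by\|_{L^{2}}^{2}$ and $C\|\theta\|_{L^{4}}^{8}\|\by\|_{L^{2}}^{2}$, and similarly a $\tfrac14\|\nabla\tau\|_{L^{2}}^{2}$ plus a lower-order $L^{2}$ term. Collecting everything and choosing
\[
\lambda_{0}\ \ge\ \tfrac12+\tfrac{\|\bm{\beta}\|_{\infty}}{2}+C\bigl(\|\bz\|_{L^{\infty}(0,T;\mathbf{L}^{4})}^{8}+\|\theta\|_{L^{\infty}(0,T;L^{4})}^{8}\bigr)
\]
yields the coercivity \eqref{operatorcoercive} with constant $\tfrac12$.

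The main obstacle is uniformity of the constants in $t$. For (i) we needed the convective factors in the bound to be controlled pointwise a.e. in $t$, and the integration by parts trick reduces matters exactly to $\bz(t),\theta(t)\in L^{4}(\Omega)$, which is guaranteed by the $L^{\infty}_{t}L^{4}_{x}$ hypothesis; otherwise the $L^{2}_{t}H^{1}_{x}$ regularity alone would only give time-integrable (not uniform) continuity and coercivity constants. Once this is observed the proof is routine, and the same choice of $\lambda_{0}$ serves simultaneously for continuity and coercivity.
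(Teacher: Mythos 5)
Your proposal is correct and follows essentially the same route as the paper: part (i) is the same term-by-term H\"older estimate with the $L^{4}\cdot L^{2}\cdot L^{4}$ splitting, $H^{1}(\Omega)\hookrightarrow L^{4}(\Omega)$ and the $L^{\infty}_{t}L^{4}_{x}$ bounds on $(\bz,\theta)$, while part (ii) reproduces the $1/4$--$7/4$ interpolation-plus-Young argument of \cref{existenceboussi} to which the paper simply refers. The only difference is that you spell out the integration by parts that transfers the derivative from $\bz$ and $\theta$ onto the test functions, which the paper leaves implicit in its displayed bounds; this is a welcome clarification, not a deviation.
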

\begin{proof}
$(i)$ \, For all $(\by,\tau)\in \V^{1}_{0}(\Omega)\times H^{1}(\Omega)$ and all $(\Phi,\psi) \in \V^{1}_{0}(\Omega)\times H^{1}(\Omega)$ and $t \in (0,T)$, we have 
\begin{multline}
|\langle \mc{A}_{\bz,\theta}(t) \begin{pmatrix}
\mathbf{y}\\ \tau
\end{pmatrix},
\begin{pmatrix}
\PHI \\ \psi
\end{pmatrix}
\rangle| \leq  ||\by||_{\V_{0}^{1}(\Omega)}||\Phi||_{\V^{1}_{0}(\Omega)}+ ||\bz||_{L^{\infty}(0,T;\mathbf{L}^{4}(\Omega))}||\by||_{\V^{1}_{0}(\Omega)}||\Phi||_{\mathbf{L}^{4}(\Omega)} \\ + ||\by||_{\mathbf{L}^{4}(\Omega)}||\Phi||_{\V_{0}^{1}(\Omega)}||\bz||_{L^{\infty}(0,T; \mathbf{L}^{4}(\Omega))} + ||\bm{\beta}||_{L^{\infty}}||\tau||_{L^{2}(\Omega)}||\Phi||_{\mathbf{L}^{2}(\Omega)} \\ + ||\tau||_{H^{1}(\Omega)}||\psi||_{H^{1}(\Omega)}+ ||\bz||_{L^{\infty}(0,T;\mathbf{L}^{4}(\Omega))}||\tau||_{H^{1}(\Omega)}||\psi||_{L^{4}(\Omega)}+\\ ||\by||_{\mathbf{L}^{4}(\Omega)}||\psi||_{H^{1}(\Omega)}||\theta||_{L^{\infty}(0,T; L^{4}(\Omega))}  \leq M || \begin{pmatrix}
\mathbf{y}\\ \tau
\end{pmatrix}||_{\V_{0}^{1}(\Omega)\times H^{1}(\Omega)}|| \begin{pmatrix}
\mathbf{\Phi}\\ \psi
\end{pmatrix}||_{\V_{0}^{1}(\Omega)\times H^{1}(\Omega)} .
\end{multline}
Here the last inequality can be obtained by using the fact that $H^{1}(\Omega) \hookrightarrow L^{4}(\Omega)$ is continuous.\\

$(ii)$ \, We can follow the same steps as in the proof of theorem \eqref{existenceboussi} to get our desired estimate.
\end{proof}
\begin{theorem}
Assume that $(\bz,\theta)\in L^{2}(0,T;\V^{1}(\Omega)) \cap L^{\infty}(0,T;\mathbf{L}^{4}(\Omega)) \times \\ L^{2}(0,T;H^{1}(\Omega)) \cap L^{\infty}(0,T; L^{4}(\Omega))$. For all $(\by_{0},\tau_{0}) \in \V_{n}^{0}(\Omega)\times L^{2}(\Omega)$ and $(\mathbf{f}_{1},f_{2})\in L^{2}(0,T;\mathbf{H}^{-1}(\Omega))\times L^{2}(0,T; H^{-1}(\Omega))$, equation \eqref{homogeneoslinearized} admits a unique weak solution\\ $(\by, p, \tau)\in W(0,T;\V^{1}_{0}(\Omega),\V^{-1}(\Omega))\times L^{2}(0,T; L^{2}(\Omega)) \times W(0,T; H^{1}(\Omega),(H^{1}(\Omega))').$ 
\end{theorem}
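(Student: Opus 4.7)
The plan is to view \eqref{homogeneoslinearized} as an abstract parabolic variational problem on the Gelfand triple $V := \V^{1}_{0}(\Omega) \times H^{1}(\Omega) \hookrightarrow H := \V^{0}_{n}(\Omega) \times L^{2}(\Omega) \hookrightarrow V'$, apply the classical Lions--Magenes existence theorem for parabolic equations governed by time-dependent coercive bilinear forms, and then recover the pressure via de Rham's theorem.

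First I would project the momentum equation by $P$ to eliminate the pressure, reducing \eqref{homogeneoslinearized} to
\[
\frac{d}{dt}\begin{pmatrix} \mathbf{y} \\ \tau \end{pmatrix} = \mathcal{A}_{\mathbf{z},\theta}(t)\begin{pmatrix} \mathbf{y} \\ \tau \end{pmatrix} + \widetilde{P}\begin{pmatrix} \mathbf{f}_{1} \\ f_{2} \end{pmatrix}, \qquad \begin{pmatrix}\mathbf{y}(0)\\ \tau(0)\end{pmatrix} = \begin{pmatrix}\mathbf{y}_{0} \\ \tau_{0}\end{pmatrix},
\]
interpreted weakly through the bilinear form \eqref{bilinearform}. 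The associated form $a(t;\cdot,\cdot) := -\langle \mathcal{A}_{\mathbf{z},\theta}(t)\cdot, \cdot\rangle$ is measurable in $t$ by the measurability of $(\mathbf{z},\theta)$, uniformly bounded on $V \times V$ by \eqref{operatorcontinuity}, and satisfies the Garding-type inequality \eqref{operatorcoercive} coming from Lemma \ref{coercive}. Since $\widetilde{P}(\mathbf{f}_{1},f_{2}) \in L^{2}(0,T; V')$ and $(\mathbf{y}_{0},\tau_{0}) \in H$, all hypotheses of the abstract parabolic existence theorem are met.

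Invoking this theorem (cf.\ Dautray--Lions, Vol.~5, Ch.~XVIII) yields a unique pair $(\mathbf{y},\tau) \in L^{2}(0,T; V)$ with $(\mathbf{y}',\tau') \in L^{2}(0,T; V')$ solving the variational formulation with the prescribed initial condition, and the embedding $W(0,T; V, V') \hookrightarrow C([0,T]; H)$ accounts for the initial trace. As an alternative self-contained route, I would construct Galerkin approximations $(\mathbf{y}_{n},\tau_{n})$ against eigenfunction bases of the Stokes operator on $\V^{1}_{0}(\Omega)$ and the Neumann Laplacian on $H^{1}(\Omega)$, derive uniform bounds in $L^{\infty}(0,T;H) \cap L^{2}(0,T;V)$ by testing with $(\mathbf{y}_{n},\tau_{n})$ and applying \eqref{operatorcoercive} combined with Gronwall's inequality, and pass to the limit via weak and weak-$*$ compactness — no strong compactness is required because the convective coefficients $(\mathbf{z},\theta)$ are fixed and all nonlinearity in the unknowns has disappeared.

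To produce the pressure, I would note that the reconstructed momentum equation
\[
\mathbf{y}' - \Delta \mathbf{y} + (\mathbf{z}\cdot\nabla)\mathbf{y} + (\mathbf{y}\cdot\nabla)\mathbf{z} - \bm{\beta}\tau - \mathbf{f}_{1} \in L^{2}(0,T; \mathbf{H}^{-1}(\Omega))
\]
annihilates every divergence-free test function, so by a time-parametric de Rham argument (see e.g.\ Temam, \emph{Navier--Stokes Equations}, Ch.~I, Proposition 1.2) there exists a unique $p \in L^{2}(0,T; L^{2}(\Omega)/\mathbb{R})$ whose gradient equals that quantity. Uniqueness for the full system is immediate from linearity together with \eqref{operatorcoercive} applied to the difference of two solutions. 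The main obstacle I anticipate is verifying continuity and the Garding inequality uniformly in $t$ under only the stated integrability of $(\mathbf{z},\theta)$: the $L^{\infty}(0,T;\mathbf{L}^{4}(\Omega))$ hypothesis is precisely what is needed to bound the trilinear convective terms pointwise in time, and this is the step that couples the analysis here to the regularity prescribed on the instationary state.
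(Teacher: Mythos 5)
Your existence--uniqueness argument is exactly the paper's: project with $P$, work on the Gelfand triple $\V^{1}_{0}(\Omega)\times H^{1}(\Omega)\hookrightarrow \V^{0}_{n}(\Omega)\times L^{2}(\Omega)\hookrightarrow \V^{-1}(\Omega)\times (H^{1}(\Omega))'$, use the boundedness \eqref{operatorcontinuity} and the G\r{a}rding-type estimate \eqref{operatorcoercive} of Lemma \ref{coercive}, and invoke the Lions--Dautray--Lions theorem for time-dependent coercive forms (the Galerkin route you sketch is just the standard proof of that theorem, so it adds nothing essentially different).

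The one step where your write-up is thinner than it looks is the pressure. The abstract theorem gives $\by'\in L^{2}(0,T;\V^{-1}(\Omega))$, i.e.\ $\by'(t)$ is a priori a functional on divergence-free fields only; consequently the residual $\by'-\Delta\by+(\bz\cdot\nabla)\by+(\by\cdot\nabla)\bz-\bm{\beta}\tau-\mathbf{f}_{1}$ is \emph{not} automatically an element of $L^{2}(0,T;\mathbf{H}^{-1}(\Omega))$, and the pointwise-in-time de Rham argument you cite cannot be applied directly to it. The standard remedy (Temam, Ch.~III) is to integrate in time first and apply de Rham to $\by(t)-\by_{0}+\int_{0}^{t}\bigl(-\Delta\by+(\bz\cdot\nabla)\by+(\by\cdot\nabla)\bz-\bm{\beta}\tau-\mathbf{f}_{1}\bigr)ds$, which produces a primitive $P(t)\in L^{2}(\Omega)/\R$ and hence a pressure only as a time derivative, i.e.\ distributional in $t$; upgrading this to $p\in L^{2}(0,T;L^{2}(\Omega))$, which is what the theorem asserts, requires a genuine additional argument (e.g.\ showing that $\by'$ extends to $L^{2}(0,T;\mathbf{H}^{-1}(\Omega))$, which does not follow from the weak-solution framework alone with data merely in $L^{2}(0,T;\mathbf{H}^{-1}(\Omega))$). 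To be fair, the paper's own proof is silent on the pressure as well, and in the nonlinear Theorem \ref{nonlinearpart} the paper only claims $q\in\mathcal{D}'(0,T;L^{2}(\Omega))$; but as your proposal stands, the sentence producing $p\in L^{2}(0,T;L^{2}(\Omega)/\R)$ is not justified and is the only real gap.
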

\begin{proof}
Here $(\by_{0},\tau_{0}) \in \V_{n}^{0}(\Omega)\times L^{2}(\Omega)$ and $(\mathbf{f}_{1},f_{2})\in L^{2}(0,T;\mathbf{H}^{-1}(\Omega))\times L^{2}(0,T; H^{-1}(\Omega))$. Observe that : $\V^{1}_{0}(\Omega)\times H^{1}(\Omega)\hookrightarrow \V_{n}^{0}(\Omega)\times L^{2}(\Omega) \hookrightarrow \V^{-1}(\Omega)\times (H^{1}(\Omega))'$. \\ From \eqref{operatorcontinuity}, we have $\mc{A}_{\bz,\theta}(t)$ is a continuous linear operator from $\V^{1}_{0}(\Omega)\times H^{1}(\Omega)$ to $\V^{-1}(\Omega)\times (H^{1}(\Omega))' )$. Also coercivity of $\mc{A}_{\bz,\theta}(t)$ follows from \eqref{operatorcoercive}.
Then the theorem is a direct consequence of lemma \ref{coercive} and \cite[Chapter 18, Section 3.2, Theorem 1 and 2]{DL}.
\end{proof}
 \section{Nonlinear Boussinesq system}
 In this section , we want to study non homogeneous boussinesq system \eqref{prothomeq}. Let us study at first following homogeneous system: 
 \begin{equation}\label{homogeneousnonlinear}
\left. \begin{aligned}
 & \frac{\partial \mathbf{y}}{\partial t} - \Delta \mathbf{y} + (\mathbf{y}.\nabla)\mathbf{y} + \nabla q= \bm {\beta}\tau + \mathbf{f}_{1} \mbox{ in } Q \\ & \mbox{div }\mathbf{y}=0 \mbox{ in } Q;\\ &\mathbf{y}=\mathbf{0} \mbox{ on } \Sigma,\quad \mathbf{y}(0)=\mathbf{y}_{0}(x) \mbox{ in } \Omega \\& \frac{\partial \tau}{\partial t} - \Delta\tau + (\mathbf{y}.\nabla)\tau  = f_{2} \mbox{ in }\mbox { Q }  \\& \frac{\partial\tau}{\partial n} = 0 \mbox{ on } \Sigma , \tau(0)=\tau_{0} \mbox{ in } \Omega.
 \end{aligned}
\right\}
\end{equation}
\begin{remark}
\quad Equation \eqref{homogeneousnonlinear} can be written as :
\begin{equation}\label{operatorform1}
\left. \begin{aligned}
&\begin{pmatrix}
\mathbf{y}'\\ \tau'
\end{pmatrix}= \mc{A}\begin{pmatrix}
\mathbf{y}\\ \tau
\end{pmatrix}-\wt{P} 
\begin{pmatrix}
(\by.\nabla)\by \\ (\by.\nabla)\tau
\end{pmatrix}-\wt{P}
\begin{pmatrix}
\mathbf{f}_{1} \\ f_{2}
\end{pmatrix},\\ &\begin{pmatrix}
\mathbf{y}\\ \tau
\end{pmatrix}(0)=\begin{pmatrix}
\mathbf{y}_{0}\\ \tau_{0}
\end{pmatrix} .
 \end{aligned}
\right\}
\end{equation}
where, $\mathcal{A}=
\begin{pmatrix}
P\Delta & P(\bm{\beta}\tau)\\ 0 & \Delta
\end{pmatrix}$ .
\end{remark}
\begin{theorem}\label{nonlinear1}
For given $(\mathbf{f}_{1},f_{2})\in L^{2}(0,T;(\V^{1}_{0}(\Omega))')\times L^{2}(0,T; (H^{1}(\Omega))')$ and $(\by_{0},\tau_{0})\in \V_{n}^{0}(\Omega)\times L^{2}(\Omega),$ then system \eqref{homogeneousnonlinear} has at least one weak solution $(\by, q, \tau)$ belongs to $ L^{2}(0,T; \V^{1}_{0}(\Omega))\times L^{2}(0,T; L^{2}(\Omega)) \times L^{2}(0,T; H^{1}(\Omega)).$ Moreover, $(\by,\tau)$ belongs to $L^{\infty}(0,T;\V_{n}^{0}(\Omega)\times L^{2}(\Omega))$ and $(\by,\tau)$ is weakly continuous from $[0,T]$ into $\V_{n}^{0}(\Omega) \times L^{2}(\Omega)$.
\end{theorem}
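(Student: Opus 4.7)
The plan is to construct approximate solutions by a Faedo--Galerkin scheme, derive uniform energy estimates, and pass to the limit using an Aubin--Lions type compactness argument. Let $\{\mathbf{w}_k\}_{k\geq 1}$ be an orthonormal basis of $\V_n^0(\Omega)$ consisting of eigenfunctions of the Stokes operator $-P\Delta$ with domain $\V^2(\Omega)\cap \V^1_0(\Omega)$, and let $\{\eta_k\}_{k\geq 1}$ be an orthonormal basis of $L^2(\Omega)$ made of eigenfunctions of the Neumann Laplacian on $H^1(\Omega)$. For each $m\geq 1$ I would look for an approximate solution
\[
\by_m(t)=\sum_{k=1}^m a_{km}(t)\mathbf{w}_k,\qquad \tau_m(t)=\sum_{k=1}^m b_{km}(t)\eta_k,
\]
satisfying the projected system obtained by testing \eqref{homogeneousnonlinear} against $\mathbf{w}_j$ and $\eta_j$ for $1\le j\le m$, with initial data given by the $L^2$-projections of $(\by_0,\tau_0)$. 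This yields a system of ODEs for $(a_{km},b_{km})$ whose local-in-time existence follows from Cauchy--Lipschitz.

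The next step is to derive a priori estimates. Testing with $(\by_m,\tau_m)$ and exploiting the crucial cancellations $\int_\Omega((\by_m\cdot\nabla)\by_m)\cdot\by_m=0$ and $\int_\Omega((\by_m\cdot\nabla)\tau_m)\tau_m=0$ (which hold since $\mathrm{div}\,\by_m=0$ and $\by_m$ vanishes on $\Gamma$), together with Young's inequality to absorb $|\int_\Omega\bm{\beta}\tau_m\cdot\by_m|$ and the duality pairings with $\mathbf{f}_1,f_2$, produces the standard energy identity
\[
\tfrac{1}{2}\tfrac{d}{dt}\bigl(\|\by_m\|_{\V^0_n}^2+\|\tau_m\|_{L^2}^2\bigr)+\|\nabla\by_m\|_{L^2}^2+\|\nabla\tau_m\|_{L^2}^2\le C\bigl(\|\by_m\|_{\V^0_n}^2+\|\tau_m\|_{L^2}^2\bigr)+C\bigl(\|\mathbf{f}_1\|_{(\V^1_0)'}^2+\|f_2\|_{(H^1)'}^2\bigr).
\]
Gronwall then gives $(\by_m,\tau_m)$ bounded in $L^\infty(0,T;\V^0_n(\Omega)\times L^2(\Omega))\cap L^2(0,T;\V^1_0(\Omega)\times H^1(\Omega))$ uniformly in $m$, which also rules out finite time blow-up of the ODE system so the approximations exist on $[0,T]$.

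The main obstacle is passing to the limit in the nonlinear convective terms $(\by_m\cdot\nabla)\by_m$ and $(\by_m\cdot\nabla)\tau_m$, since weak convergence alone is not enough. The standard remedy is to bound $\by_m'$ in $L^{4/3}(0,T;\V^{-1}(\Omega))$ and $\tau_m'$ in $L^{4/3}(0,T;(H^1(\Omega))')$ by estimating the nonlinearity through
\[
\|(\by_m\cdot\nabla)\by_m\|_{\V^{-1}}\le C\|\by_m\|_{\mathbf{L}^4}^2\le C\|\by_m\|_{\V^0_n}^{1/2}\|\by_m\|_{\V^1_0}^{3/2}
\]
in dimension $N=3$ (and analogously for $(\by_m\cdot\nabla)\tau_m$, using the mixed bound $\|\by_m\|_{\mathbf{L}^4}\|\tau_m\|_{L^4}$ together with $H^1\hookrightarrow L^4$). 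With this uniform bound on the time derivative, the Aubin--Lions lemma yields strong convergence $\by_m\to\by$ in $L^2(0,T;\V^0_n(\Omega))$ and $\tau_m\to\tau$ in $L^2(0,T;L^2(\Omega))$ along a subsequence, which is exactly what is needed to identify the weak limits of the nonlinear terms.

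Finally, I would pass to the limit in the Galerkin formulation to obtain a divergence-free $\by$ satisfying the system against all test functions in $\V^1_0(\Omega)$ and $H^1(\Omega)$, recover the pressure $q\in L^2(0,T;L^2(\Omega))$ (modulo constants) via de Rham's theorem applied to the distributional equation $\by_t-\Delta\by+(\by\cdot\nabla)\by-\bm{\beta}\tau-\mathbf{f}_1$, and conclude weak continuity $(\by,\tau)\in C_w([0,T];\V^0_n(\Omega)\times L^2(\Omega))$ from the fact that $(\by,\tau)\in L^\infty(0,T;\V^0_n\times L^2)$ combined with $(\by',\tau')\in L^{4/3}(0,T;\V^{-1}\times(H^1)')$ via the classical lemma of Strauss. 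The initial condition is then inherited from the Galerkin approximation through this weak continuity.
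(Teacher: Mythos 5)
Your proposal is correct and follows essentially the same route as the paper, which simply invokes the Faedo--Galerkin construction, a priori energy estimates, and compactness, citing Temam and Morimoto for the details you spell out (choice of bases, the cancellation of the convective terms, the $L^{4/3}$ bound on the time derivatives, Aubin--Lions, de Rham for the pressure, and weak continuity). Your sketch is just a more explicit version of the argument the paper outsources to those references.
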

\begin{proof}
 The proof is based on the construction of approximate solution by the Galerkin method. Since $\V^{1}_{0}(\Omega)$ is separable and $\mathcal{V}(\Omega)$ is dense in $\V^{1}_{0}(\Omega)$, there exists sequence of elements of $\mathcal{V}(\Omega)$, which is basis of $\V^{1}_{0}(\Omega)$. Similarly, there exists a sequence of elements of $C_{c}^{\infty}(\Omega)$ which is a basis of $H^{1}(\Omega)$. Then a passage to the limit using suitable a priori estimate for approximate solution and compactness theorems help us to complete the proof. (See e.g. \cite[Chapter 3, Theorem 3.1]{TEMAM} for Navier-Stokes equation). As equation \eqref{homogeneousnonlinear} is with homogeneous boundary data, we can follow the same steps as in \cite{Morimoto} to complete the proof.
\end{proof}
\begin{remark}\label{split}
\quad Our main aim is to solve nonlinear Boussinesq system \eqref{prothomeq}. Now we write $\bz=\bu+\by$ and $\theta=\phi+\tau$, where $(\bz, p, \theta)$ solves \eqref{prothomeq} and $(\bu,\phi)=\bu_{\phi}$ satisfies :
\begin{equation}\label{pu1}
\left. \begin{aligned}
\wt{P}\mathbf{u_{\phi}}&= \wt{\mc{A}}\wt{P}\mathbf{u_{\phi}}+(-\wt{\mc{A}})\wt{P}L_{0}(\mathbf{g},h)
,\quad \wt{P}\bu_{\phi}(0)=\wt{P}L_{0}(\mathbf{g}(0),h(0)),\\ \mbox{ and   }
 &(I-\wt{P})\mathbf{u_{\phi}}(.)=(I-\wt{P})L_{0}(\mathbf{g}(.),h(.)).  
\end{aligned}
\right\}
\end{equation}
 Here $(\by, q, \tau)$ is the solution of :
 \begin{equation}\label{homogeneousfulllinear}
\left. \begin{aligned}
 & \frac{\partial \mathbf{y}}{\partial t} - \Delta \mathbf{y} + (\mathbf{u}.\nabla)\mathbf{y} + (\mathbf{y}.\nabla)\bu + (\by.\nabla)\by+(\bu.\nabla)\bu+\nabla q= \bm {\beta}\tau  \mbox{ in } Q ;\\ & \mathrm{div}\,\mathbf{y}=0 \mbox{ in } Q;\\ &\mathbf{y}=\mathbf{0} \mbox{ on } \Sigma, \\& \frac{\partial \tau}{\partial t} - \Delta\tau +(\bu.\nabla)\tau + (\mathbf{u}.\nabla)\phi+(\by.\nabla)\phi+(\by.\nabla)\tau  = 0 \mbox{ in }\mbox { Q }  \\& \frac{\partial\tau}{\partial n} = 0 \mbox{ on } \Sigma ,\\ &\begin{pmatrix}
\mathbf{y}\\ \tau
\end{pmatrix}(0)=\wt{P}\begin{pmatrix}
\mathbf{z}(0)\\ \theta(0)
\end{pmatrix}-\wt{P}L_{0}\begin{pmatrix}
\mathbf{g}(0)\\ h(0)
\end{pmatrix} \quad\mbox{  in  }\quad\Omega .
 \end{aligned}
\right\}
\end{equation}
\end{remark}
\begin{lemma}\label{knownterm}
Let $\bu_{\phi}$ be the solution of \eqref{pu1} with $(\bg,h)\in \V^{3/4,3/4}(\Sigma)\times\\C([0,T]) L^{2}(0,T;(H^{1/4}(\Gamma))') \cap H^{3/4}(0,T;(H^{1}(\Gamma))')$. Then $(\bu.\nabla)\bu \in L^{2}(0,T;\V^{-1}(\Omega))$ and $(\bu.\nabla)\phi \in L^{2}(0,T;(H^{1}(\Omega))')$.
\end{lemma}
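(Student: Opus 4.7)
The plan is to combine the regularity from Corollary \ref{continuity} with elementary Sobolev embeddings to control both nonlinear terms by Hölder's inequality. Since the hypotheses on $(\bg,h)$ and the choice $\wt{P}\mathbf{u_{\phi}}(0)=\wt{P}L_{0}(\mathbf{g}(0),h(0))$ in \eqref{pu1} make the compatibility condition automatic (and $PL_0(\bg(0),h(0))$ lies in $\V_n^{3/4}(\Omega)\times H^{3/4}(\Omega)$), Corollary \ref{continuity} applies directly, yielding
\begin{equation*}
\bu \in C([0,T];\V^{3/4}(\Omega))\cap L^{2}(0,T;\V^{5/4}(\Omega)),\qquad \phi \in C([0,T];H^{3/4}(\Omega))\cap L^{2}(0,T;H^{5/4}(\Omega)).
\end{equation*}
In particular $\bu(t)\in \V^{3/4}(\Omega)$ is divergence-free for a.e. $t$, and by the Sobolev embedding $H^{3/4}(\Omega)\hookrightarrow L^{4}(\Omega)$ (valid in dimensions $N=2,3$), we have $\bu\in L^{\infty}(0,T;\mathbf{L}^{4}(\Omega))$.

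For the velocity nonlinearity, I would define $(\bu\cdot\nabla)\bu$ as an element of $\V^{-1}(\Omega)$ by duality. Given $\mathbf{\Phi}\in \V^{1}_{0}(\Omega)$, the integration-by-parts identity
\begin{equation*}
\int_{\Omega}((\bu\cdot\nabla)\bu)\cdot\mathbf{\Phi}=-\int_{\Omega}((\bu\cdot\nabla)\mathbf{\Phi})\cdot\bu
\end{equation*}
follows from $\mathrm{div}\,\bu=0$ and $\mathbf{\Phi}|_{\Gamma}=0$, so that the boundary term $\int_{\Gamma}(\bu\cdot\mathbf{n})(\bu\cdot\mathbf{\Phi})$ vanishes. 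Hölder's inequality then gives $|\langle (\bu\cdot\nabla)\bu,\mathbf{\Phi}\rangle|\le \|\bu\|_{\mathbf{L}^{4}(\Omega)}^{2}\|\nabla\mathbf{\Phi}\|_{\mathbf{L}^{2}(\Omega)}$, hence $\|(\bu\cdot\nabla)\bu\|_{\V^{-1}(\Omega)}\le C\|\bu\|_{H^{3/4}(\Omega)}^{2}$. Integrating in time and using $\bu\in L^{\infty}(0,T;H^{3/4}(\Omega))$ yields
\begin{equation*}
\int_{0}^{T}\|(\bu\cdot\nabla)\bu\|_{\V^{-1}(\Omega)}^{2}\,dt\le C\,T\,\|\bu\|_{L^{\infty}(0,T;H^{3/4}(\Omega))}^{4}<\infty.
\end{equation*}

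For the temperature term, I would pair $(\bu\cdot\nabla)\phi$ with $\psi\in H^{1}(\Omega)$ without integration by parts, so that no boundary term involving $\bu\cdot\mathbf{n}$ (which need not vanish) appears. Hölder gives
\begin{equation*}
\left|\int_{\Omega}(\bu\cdot\nabla\phi)\psi\right|\le \|\bu\|_{\mathbf{L}^{4}(\Omega)}\|\nabla\phi\|_{L^{2}(\Omega)}\|\psi\|_{L^{4}(\Omega)}\le C\|\bu\|_{H^{3/4}(\Omega)}\|\phi\|_{H^{1}(\Omega)}\|\psi\|_{H^{1}(\Omega)},
\end{equation*}
using $H^{3/4}\hookrightarrow L^{4}$ for $\bu$ and $H^{1}\hookrightarrow L^{4}$ for $\psi$ in dimension $N\le 3$. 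Thus $\|(\bu\cdot\nabla)\phi\|_{(H^{1}(\Omega))'}\le C\|\bu\|_{H^{3/4}(\Omega)}\|\phi\|_{H^{1}(\Omega)}$, and then
\begin{equation*}
\int_{0}^{T}\|(\bu\cdot\nabla)\phi\|_{(H^{1}(\Omega))'}^{2}\,dt\le C\|\bu\|_{L^{\infty}(0,T;H^{3/4}(\Omega))}^{2}\|\phi\|_{L^{2}(0,T;H^{1}(\Omega))}^{2}<\infty,
\end{equation*}
since $\phi\in L^{2}(0,T;H^{5/4}(\Omega))\hookrightarrow L^{2}(0,T;H^{1}(\Omega))$. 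There is no serious obstacle; the only point requiring care is the choice of a pairing that avoids producing a trace of $\bu\cdot\mathbf{n}$ on $\Gamma$, which is why integration by parts is used for $\bu\cdot\nabla\bu$ (where the test function vanishes on $\Gamma$) but avoided for $\bu\cdot\nabla\phi$.
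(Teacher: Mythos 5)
Your proof is correct and follows essentially the same route as the paper: invoke Corollary \ref{continuity} to get $(\bu,\phi)\in C([0,T];\V^{3/4}(\Omega)\times H^{3/4}(\Omega))\cap L^{2}(0,T;\V^{5/4}(\Omega)\times H^{5/4}(\Omega))$, then estimate the nonlinear terms by duality, H\"older, and the embedding $H^{3/4}(\Omega)\hookrightarrow L^{4}(\Omega)$, integrating by parts only against test functions vanishing on $\Gamma$. The only difference is that you write out the temperature term explicitly (and the observation about avoiding a trace of $\bu\cdot\mathbf{n}$), whereas the paper disposes of it with ``similar calculations''.
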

\begin{proof}
As $(\bg,h)\in \V^{3/4,3/4}(\Sigma)\times L^{2}(0,T;(H^{1/4}(\Gamma))') \cap H^{3/4}(0,T;(H^{1}(\Gamma))')$, from corollary \ref{continuity} we know that :
\begin{align*}
(\bu,\phi)\in C([0,T];\V^{3/4}(\Omega))\cap L^{2}(0,T;\V^{5/4}(\Omega)) \times C([0,T];H^{3/4}(\Omega))\cap L^{2}(0,T;H^{5/4}(\Omega)) .
\end{align*}
Consider $\mathbf{w}\in L^{2}(0,T; \V^{1}_{0}(\Omega))$. To show $(\bu.\nabla)\bu \in L^{2}(0,T;\V^{-1}(\Omega))$, it is enough to establish that : 
\begin{align*}
\int_{0}^{T}|\langle (\bu(t).\nabla)\bu(t),\,\mathbf{w}(t) \rangle_{\V^{-1}(\Omega),\V^{1}_{0}(\Omega)}|^{2} < +\infty .
\end{align*}
%Observe that for $\mathbf{v}\in \mathbf{L}^{2}(\Omega)$ :
%\begin{align*}
%\int_{\Omega}| u_{i}(t)\frac{\partial u_{i}(t)}{\partial x_{j}}v_{j}| &\leq (\int_{\Omega}|u_{i}(t)\frac{\partial u_{i}(t)}{\partial x_{j}}|^{2})^{1/2} (\int_{\Omega} |v_{j}|^{2})^{1/2}\\ & \leq (\int_{\Omega} |u_{i}(t)|^{12})^{1/12} (\int_{\Omega}|\frac{\partial u_{i}(t)}{\partial x_{j}}|^{12/5})^{5/12}(\int_{\Omega} |v_{j}|^{2})^{1/2}\\ & \leq C||\bu(t)||_{\V^{3/4}(\Omega)}||\bu(t)||_{\V^{5/4}(\Omega)}||\mathbf{v}||_{\mathbf{L}^{2}(\Omega)} . 
%\end{align*} 
%Hence, $ (\bu(t).\nabla)\bu(t) \in \mathbf{L}^{2}(\Omega)$.
Now, we have :
\begin{align*}
\|(\bu.\nabla)\bu\|_{L^{2}(0,T; \V^{-1}(\Omega))} &= \sup_{\|\mathbf{w}\|_{L^{2}(0,T;\V^{1}_{0}(\Omega))}=1} \int_{0}^{T} |\int_{\Omega} (\bu(t).\nabla)\bu(t).\mathbf{w}(t)|^{2}
\end{align*}
On the other hand,
\begin{align*}
%\int_{0}^{T}|\langle (\bu(t).\nabla)\bu(t),\,\mathbf{w}(t) \rangle_{\V^{-1}(\Omega),\V^{1}_{0}(\Omega)}|^{2}&= \int_{0}^{T}|\langle (\bu(t).\nabla)\bu(t),\,\mathbf{w}(t) \rangle_{\mathbf{L}^{2}(\Omega),L^{2}(\Omega)}|^{2}\\ &=
\int_{0}^{T} |\int_{\Omega} (\bu(t).\nabla)\bu(t).\mathbf{w}(t)|^{2} &=\int_{0}^{T} |\int_{\Omega} (\bu(t).\nabla)\mathbf{w}(t).\bu(t)|^{2}\\ & \leq \int_{0}^{T} \big( \int_{\Omega} |(\bu(t).\nabla)\mathbf{w}(t).\bu(t)| \big)^{2}\\ & \leq \int_{0}^{T} \big(||\bu(t)||_{\mathbf{L}^{4}(\Omega)}||\mathbf{w}(t)||_{\V^{1}_{0}(\Omega)}||\bu(t)||_{\mathbf{L}^{4}(\Omega)} \big)^{2}\\ & \leq \int_{0}^{T} \big(||\bu(t)||_{\mathbf{V}^{3/4}(\Omega)}||\mathbf{w}(t)||_{\V^{1}_{0}(\Omega)}||\bu(t)||_{\mathbf{V}^{3/4}(\Omega)} \big)^{2}\\& \leq ||\bu||^{4}_{C([0,T];\V^{3/4}(\Omega))}||\bw||^{2}_{L^{2}(0,T; \V^{1}_{0}(\Omega))} < +\infty .
\end{align*}
Similar calculations as above also ensures that $(\bu.\nabla)\phi \in L^{2}(0,T;(H^{1}(\Omega))')$.
\end{proof}
\begin{theorem}\label{nonlinearpart}
Let $(P\bz(0),\theta(0))\in \V_{n}^{0}(\Omega)\times L^{2}(\Omega)$ and  $(\bg,h)\in \V^{s,s}(\Sigma)\times \\ L^{2}(0,T;(H^{1-s}(\Gamma))') \cap H^{s}(0,T;(H^{1}(\Gamma))')$. Then equation \eqref{homogeneousfulllinear}
 admits at least one weak solution $(\by, \tau) \in C_{w}([0,T];\V_{n}^{0}(\Omega)\times L^{2}(\Omega)) \cap (L^{2}(0,T;\V^{1}_{0}(\Omega))\times L^{2}(0,T;H^{1}(\Omega)))$ and $q \in \mathcal{D}'(0,T; L^{2}(\Omega))$, where $C_{w}([0,T];\V_{n}^{0}(\Omega)\times L^{2}(\Omega))$ is the subspace of\\ $L^{\infty}(0,T;\V_{n}^{0}(\Omega)\times L^{2}(\Omega))$ which are continuous from $[0,T]$ into $\V_{n}^{0}(\Omega)\times L^{2}(\Omega)$ equipped with its weak topology. 
\end{theorem}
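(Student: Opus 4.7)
The plan is to adapt the Galerkin method to the perturbed coupled system \eqref{homogeneousfulllinear}, treating $(\bu,\phi)$ as a fixed known datum. Since $(\bg,h)$ satisfies the hypotheses of Corollary \ref{continuity general} and $P\bu_\phi(0) = \wt{P}L_0(\bg(0),h(0))$ by construction (so the compatibility condition holds trivially), we have $(\bu,\phi) \in C([0,T];\V^{s}(\Omega)\times H^{s}(\Omega)) \cap L^{2}(0,T;\V^{s+1/2}(\Omega)\times H^{s+1/2}(\Omega))$ for $\tfrac{1}{2}<s<1$, and Lemma \ref{knownterm} ensures that the forcing terms $(\bu.\nabla)\bu \in L^{2}(0,T;\V^{-1}(\Omega))$ and $(\bu.\nabla)\phi \in L^{2}(0,T;(H^{1}(\Omega))')$ sit in the right duals for a weak formulation in $\V_{0}^{1}(\Omega)\times H^{1}(\Omega)$. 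I let $\{\bw_{j}\}\subset \mathcal{V}(\Omega)$ be a Galerkin basis of $\V_{0}^{1}(\Omega)$ and $\{\eta_{j}\}\subset C_{c}^{\infty}(\Omega)$ a Galerkin basis of $H^{1}(\Omega)$, and seek $(\by_{m},\tau_{m})=\big(\sum a_{j}^{m}(t)\bw_{j},\sum b_{j}^{m}(t)\eta_{j}\big)$ solving the projected system. Local existence for the resulting ODE system is immediate from Cauchy–Lipschitz, and global existence on $[0,T]$ will follow once uniform a priori bounds are in hand.

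For the a priori estimates, I test the Galerkin equations against $(\by_{m},\tau_{m})$. The four transport trilinear forms $\int((\bu.\nabla)\by_{m}).\by_{m}$, $\int((\by_{m}.\nabla)\by_{m}).\by_{m}$, $\int((\bu.\nabla)\tau_{m})\tau_{m}$, $\int((\by_{m}.\nabla)\tau_{m})\tau_{m}$ all vanish, because in each case the convecting field is divergence-free and the tested function vanishes on $\Gamma$. The surviving coupling terms $\int((\by_{m}.\nabla)\bu).\by_{m}$, $\int((\by_{m}.\nabla)\phi)\tau_{m}$, together with $\int\bm{\beta}\tau_{m}.\by_{m}$ and the two $L^{2}$–duality pairings coming from $(\bu.\nabla)\bu$ and $(\bu.\nabla)\phi$, are handled by Hölder's inequality, the Sobolev embeddings $\V_{0}^{1}\hookrightarrow \mathbf{L}^{4}$ (in 2D/3D), and Gagliardo–Nirenberg; a small fraction of $\|\nabla\by_{m}\|_{L^{2}}^{2}+\|\nabla\tau_{m}\|_{L^{2}}^{2}$ is absorbed on the left via Young's inequality, and the remaining coefficients are controlled pointwise in $t$ by $\|\bu(t)\|_{\V^{s}}$ and $\|\phi(t)\|_{H^{s}}$, which lie in $L^{\infty}(0,T)\cap L^{2}(0,T)$ in the relevant stronger norm. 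Gronwall's lemma then yields uniform bounds on $(\by_{m},\tau_{m})$ in $L^{\infty}(0,T;\V_{n}^{0}(\Omega)\times L^{2}(\Omega))\cap L^{2}(0,T;\V_{0}^{1}(\Omega)\times H^{1}(\Omega))$, and a second bound on $(\by_{m}',\tau_{m}')$ in $L^{4/3}(0,T;\V^{-1}(\Omega)\times(H^{1}(\Omega))')$ follows by reading the equation.

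Next I pass to the limit. Banach–Alaoglu gives weak-$*$ limits in $L^{\infty}(L^{2})$ and weak limits in $L^{2}(H^{1})$; Aubin–Lions applied to the pair $\V_{0}^{1}\hookrightarrow\hookrightarrow\V_{n}^{0}\hookrightarrow\V^{-1}$ (and analogously for the temperature) delivers strong convergence of $(\by_{m},\tau_{m})$ in $L^{2}(0,T;\V_{n}^{0}(\Omega)\times L^{2}(\Omega))$, which is exactly what is needed to pass to the limit in the nonlinear products $(\by_{m}.\nabla)\by_{m}$, $(\by_{m}.\nabla)\tau_{m}$, $(\by_{m}.\nabla)\bu$, $(\by_{m}.\nabla)\phi$ in the weak formulation, following the classical scheme of \cite{TEMAM} and \cite{Morimoto}. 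The pressure $q\in\mathcal{D}'(0,T;L^{2}(\Omega))$ is recovered by applying de Rham's theorem to the distributional equation satisfied by the limit $\by$, and the weak-$*$ continuity $(\by,\tau)\in C_{w}([0,T];\V_{n}^{0}(\Omega)\times L^{2}(\Omega))$ is obtained from the $L^{\infty}(L^{2})$ bound together with $(\by',\tau')\in L^{4/3}(\V^{-1}\times(H^{1})')$ via the standard lemma of Lions–Magenes type. The initial condition is inherited in this weak topology.

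The main obstacle is the bilinear cross-term $\int_{\Omega}((\by_{m}.\nabla)\bu).\by_{m}$ (and its thermal analogue) in 3D: since $\bu$ only lies in $L^{\infty}(0,T;\V^{s}(\Omega))$ with $s$ possibly very close to $\tfrac{1}{2}$, the embedding $\V^{s}\hookrightarrow\mathbf{L}^{4}$ is borderline. The remedy is to rewrite this term by integration by parts as $-\int((\by_{m}.\nabla)\by_{m}).\bu$, apply Hölder with $\|\nabla\by_{m}\|_{L^{2}}\|\by_{m}\|_{L^{4}}\|\bu\|_{L^{4}}$, and combine Gagliardo–Nirenberg $\|\by_{m}\|_{L^{4}}\leq C\|\by_{m}\|_{\V^{1}_{0}}^{3/4}\|\by_{m}\|_{\V^{0}}^{1/4}$ with the time-integrated bound $\bu\in L^{2}(0,T;\V^{s+1/2}(\Omega))\hookrightarrow L^{2}(0,T;\mathbf{L}^{4}(\Omega))$; the exponent splitting is then engineered so that Young's inequality produces a prefactor $\|\bu\|_{\mathbf{L}^{4}}^{8}\|\by_{m}\|_{\V^{0}}^{2}$ of Gronwall type, exactly as already exploited in the coercivity argument of Theorem \ref{existenceboussi}. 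Once this delicate estimate is carried out uniformly in $m$, the rest of the proof reduces to standard compactness and duality arguments.
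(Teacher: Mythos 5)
Your strategy is in substance the one the paper follows: freeze $(\bu,\phi)$ given by Corollary \ref{continuity general} (the compatibility condition indeed holds by the choice $\wt{P}\bu_{\phi}(0)=\wt{P}L_{0}(\bg(0),h(0))$ in \eqref{pu1}), put the forcing $((\bu.\nabla)\bu,(\bu.\nabla)\phi)$ into $L^{2}(0,T;\V^{-1}(\Omega))\times L^{2}(0,T;(H^{1}(\Omega))')$, and solve the perturbed system \eqref{homogeneousfulllinear} by energy estimates plus compactness. The paper phrases this through the operator form \eqref{operatorform2}--\eqref{operatorform3}, the rescaling $e^{-\lambda_{0}t}$, Lemma \ref{coercive}, and a citation of \cite[Theorem 5.1]{RAMO}; you simply write out the underlying Galerkin argument (cancellation of the transport trilinear terms against compactly supported Galerkin functions, $L^{4/3}$ bound on the time derivatives, Aubin--Lions, de Rham, weak continuity), which is fine and consistent with Theorem \ref{nonlinear1} and \cite{TEMAM}.

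There is, however, a genuine gap exactly at the step you single out as the main obstacle. After integrating by parts you estimate the cross term by $\|\by_{m}\|_{\mathbf{L}^{4}}\|\nabla\by_{m}\|_{\mathbf{L}^{2}}\|\bu\|_{\mathbf{L}^{4}}$ and, via Gagliardo--Nirenberg and Young, arrive at the Gronwall coefficient $\|\bu(t)\|_{\mathbf{L}^{4}}^{8}$. For Gronwall this must lie in $L^{1}(0,T)$, i.e. you need $\bu\in L^{8}(0,T;\mathbf{L}^{4}(\Omega))$, whereas the regularity you invoke, $\bu\in L^{2}(0,T;\V^{s+1/2}(\Omega))\hookrightarrow L^{2}(0,T;\mathbf{L}^{4}(\Omega))$, only gives $\|\bu\|_{\mathbf{L}^{4}}^{8}\in L^{1/4}(0,T)$; even interpolating with $C([0,T];\V^{s}(\Omega))$ one obtains $\bu\in L^{2/\theta}(0,T;\V^{s+\theta/2}(\Omega))$ and, since $\V^{s+\theta/2}(\Omega)\hookrightarrow\mathbf{L}^{4}(\Omega)$ in 3D requires $s+\theta/2\geq 3/4$, the needed $L^{8}(0,T;\mathbf{L}^{4})$ is available only for $s\geq 5/8$. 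So your exponent scheme fails on the range $\tfrac{1}{2}<s<\tfrac{5}{8}$, which the theorem must cover. The repair is the one you gesture at in your second paragraph and which is in effect what the coercivity computations of Theorem \ref{existenceboussi} and Lemma \ref{coercive} do: use the time-uniform bound $\bu\in C([0,T];\V^{s}(\Omega))$ with $s>\tfrac{1}{2}$. In 3D $\V^{s}(\Omega)\hookrightarrow\mathbf{L}^{6/(3-2s)}(\Omega)$, so H\"older gives $|\int_{\Omega}((\by_{m}.\nabla)\by_{m}).\bu|\leq \|\by_{m}\|_{\mathbf{L}^{3/s}}\|\nabla\by_{m}\|_{\mathbf{L}^{2}}\|\bu\|_{\mathbf{L}^{6/(3-2s)}}$, interpolation gives $\|\by_{m}\|_{\mathbf{L}^{3/s}}\leq C\|\by_{m}\|_{\V^{1}_{0}(\Omega)}^{3/2-s}\|\by_{m}\|_{\V^{0}_{n}(\Omega)}^{s-1/2}$, and Young with exponents $4/(5-2s)$ and $4/(2s-1)$ yields $\epsilon\|\by_{m}\|^{2}_{\V^{1}_{0}(\Omega)}+C_{\epsilon}\|\bu\|^{4/(2s-1)}_{C([0,T];\V^{s}(\Omega))}\|\by_{m}\|^{2}_{\V^{0}_{n}(\Omega)}$, a Gronwall coefficient that is constant in time; the thermal cross term with $\phi$ is handled identically. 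With that substitution your a priori estimate closes for every $s\in(\tfrac{1}{2},1)$ and the rest of your argument goes through as you describe.
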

\begin{proof}
We can rewrite equation \eqref{homogeneousfulllinear} satisfied by $(\by,\tau)$ as :
 \begin{equation}\label{operatorform2}
\left. \begin{aligned}
&\begin{pmatrix}
\mathbf{y}'\\ \tau'
\end{pmatrix}= \mc{A}_{\bz,\theta}(t)\begin{pmatrix}
\mathbf{y}\\ \tau
\end{pmatrix}-\wt{P} 
\begin{pmatrix}
(\by.\nabla)\by \\ (\by.\nabla)\tau
\end{pmatrix}-\wt{P}
\begin{pmatrix}
(\bu.\nabla)\bu \\ (\bu.\nabla)\phi
\end{pmatrix},\\ &\begin{pmatrix}
\mathbf{y}\\ \tau
\end{pmatrix}(0)=\wt{P}\begin{pmatrix}
\mathbf{z}(0)\\ \theta(0)
\end{pmatrix}-\wt{P}L_{0}\begin{pmatrix}
\mathbf{g}(0)\\ h(0)
\end{pmatrix} .
 \end{aligned}
\right\}
\end{equation}
Since $(\bg,h)\in \V^{s,s}(\Sigma)\times L^{2}(0,T;(H^{1-s}(\Gamma))') \cap H^{s}(0,T;(H^{1}(\Gamma))')$, according to Corollary \ref{continuity general}, we conclude that $\wt{P}L_{0}\begin{pmatrix}
\mathbf{g}(0)\\ h(0)
\end{pmatrix} \in \V^{s}(\Omega)\times H^{s}(\Omega)$ and  \\ $(\bu,\phi)\in C([0,T];\V^{s}(\Omega))\cap L^{2}(0,T;\V^{s+\frac{1}{2}}(\Omega)) \times C([0,T];H^{s}(\Omega))\cap L^{2}(0,T;H^{s+\frac{1}{2}}(\Omega))$. Thus it is clear that  $\wt{P}
\begin{pmatrix}
(\bu.\nabla)\bu \\ (\bu.\nabla)\phi
\end{pmatrix} \in L^{2}(0,T;\V^{-1}(\Omega)) \times L^{2}(0,T;(H^{1}(\Omega))')$. A function $(\mathbf{y},\tau)\in C_{w}([0,T];\V_{n}^{0}(\Omega)\times L^{2}(\Omega)) \cap (L^{2}(0,T;\V^{1}_{0}(\Omega))\times L^{2}(0,T;H^{1}(\Omega)))$ is a weak solution to \eqref{operatorform2} iff $\begin{pmatrix}
\widehat{\mathbf{y}}(t) \\ \widehat{\tau}(t)
\end{pmatrix}= e^{-\lambda_{0}t}\begin{pmatrix}
\mathbf{y}(t) \\ \tau(t)
\end{pmatrix}$ is a solution to the system 
 \begin{equation}\label{operatorform3}
\left. \begin{aligned}
&\begin{pmatrix}
\widehat{\mathbf{y}}'\\ \widehat{\tau}'
\end{pmatrix}= \mc{A}_{\bz,\theta}(t)\begin{pmatrix}
\widehat{\mathbf{y}}\\ \widehat{\tau}
\end{pmatrix}-\lambda_{0}\begin{pmatrix}
\widehat{\mathbf{y}}\\ \widehat{\tau}
\end{pmatrix}-\wt{P} 
\begin{pmatrix}
(\widehat{\by}.\nabla)\widehat{\by} \\ (\widehat{\by}.\nabla)\widehat{\tau}
\end{pmatrix}-\lambda_{0}\begin{pmatrix}
\widehat{\mathbf{y}}\\ \widehat{\tau}
\end{pmatrix}-\wt{P}
\begin{pmatrix}
(\bu.\nabla)\bu \\ (\bu.\nabla)\phi
\end{pmatrix},\\ &\begin{pmatrix}
\widehat{\mathbf{y}}\\ \widehat{\tau}
\end{pmatrix}(0)=\wt{P}\begin{pmatrix}
\mathbf{z}(0)\\ \theta(0)
\end{pmatrix}-\wt{P}L_{0}\begin{pmatrix}
\mathbf{g}(0)\\ h(0)
\end{pmatrix} .
 \end{aligned}
\right\}
\end{equation}
Thanks to Lemma \ref{coercive}, the existence in $C_{w}([0,T];\V_{n}^{0}(\Omega)\times L^{2}(\Omega)) \cap (L^{2}(0,T;\V^{1}_{0}(\Omega))\times L^{2}(0,T;H^{1}(\Omega)))$ of $(\widehat{\mathbf{y}},\widehat{\tau})$ satisfying the weak formulation of equation \eqref{operatorform3} can be proved as in the case of Navier-Stokes equations (see \cite[Theorem 5.1]{RAMO}).
% Thus equation \eqref{operatorform2} is similar to equation \eqref{operatorform1} with $\mc{A}$ is replaced by $\mc{A}_{\bz,\theta}$ and with a source term belongs to $ L^{2}(0,T;\V^{-1}(\Omega)) \times L^{2}(0,T;(H^{1}(\Omega))')$. \\
%
%%Let $\lambda_{0}$ be the exponent as in lemma \ref{coercive}. We know that  $(\lambda_{0}I-\mathcal{A}_{\bz,\theta},D(\mathcal{A}_{\bz,\theta}))$ generates an analytic semigroup on $\V_{n}^{0}(\Omega)\times L^{2}(\Omega)$.
%Hence the existence of solution $(\by, p, \tau)$ to equation \eqref{homogeneousfulllinear} belongs to \\$C_{w}([0,T];\V_{n}^{0}(\Omega)) \cap L^{2}(0,T;\V^{1}_{0}(\Omega))\times L^{2}(0,T; L^{2}(\Omega)) \times C_{w}([0,T]; L^{2}(\Omega)) \cap L^{2}(0,T;H^{1}(\Omega))$ can be proved as in the case of theorem \ref{nonlinear1}.
\end{proof}

\begin{proof}[Proof of Theorem \ref{mainthm}]
As we split solution $(\bz, p, \theta)$ in the form $(\bz, p, \theta)=(\bu, p_{1}, \phi)+(\by, q, \tau)$, theorem \ref{mainthm} follows from corollary \ref{continuity general}, remark \ref{split} and theorem \ref{nonlinearpart}.
\end{proof}

\medskip
% The data information below will be filled by AIMS editorial staff
Received xxxx 20xx; revised xxxx 20xx.
\medskip

\end{document}